\newtheorem{theorem}{Theorem}
\newtheorem{proposition}[theorem]{Proposition}%
\newtheorem{example}{Example}%
\newtheorem{definition}{Definition}
\newtheorem{lemma}{Lemma}
\newtheorem{corollary}{Corollary}
\begin{document}

\title[Article Title]{On The Ideals of $\Gamma$-Semigroup}


\author[1]{\fnm{Abin} \sur{Sam Tharakan}}\email{at7105@srmist.edu.in}
\equalcont{These authors contributed equally to this work.}

\author*[1]{\fnm{G.} \sur{Sheeja}}\email{sheejag@srmist.edu.in}
\equalcont{These authors contributed equally to this work.}

\affil[1]{\orgdiv{Department of Mathematics}, \orgname{SRM Institute of Science and Technology}, \orgaddress{\street{Kattankulathur}, \city{Chennai}, \postcode{603203}, \state{Tamil Nadu}, \country{India}}}


\abstract{The concept of $\Gamma$-semigroups was introduced by M. K Sen in 1981. This study aims to investigate several intriguing properties of $\Gamma$-semigroups and to provide the concepts of simple $\Gamma$-semigroups, 0-simple $\Gamma$-semigroups, and completely 0-simple $\Gamma$-semigroups. We prove that non-zero elements of the completely 0-simple $\Gamma$-semigroups form a  $\mathcal{D}$-class and are regular. Fundamental elements of these structures are explored, and we provide concrete results that characterize them using various ideals of $\Gamma$-semigroups and establish the necessary and sufficient condition for a $\Gamma$-semigroups to be completely 0-simple. This study further introduce $\Gamma$-prime ideals and gave some condition in which a $\Gamma$-2-sided ideal to be a $\Gamma$-prime. In addition, we establish a condition for a commutative $\Gamma$ semigroup to be $\Gamma$-prime. we have established how union and intersection of $\Gamma$-prime ideals become $\Gamma$-prime.}

\keywords{$\Gamma$-semigroups, $\Gamma$-ideal, Simple $\Gamma$-semigroups, 0-least simple $\Gamma$-semigroup, completely 0-2-sided simple $\Gamma$-semigroup.}



\maketitle
\section{Introduction}

 In 1981, M. K. Sen introduced the concept of a $\Gamma$-semigroup, which extends the ideas of both binary and ternary semigroups\cite{sen1981proceeding}. Subsequent studies by M. K. Sen and N. K. Saha\cite{sen1986semigroup} have explored the fundamental properties and characteristics of $\Gamma$-semigroups. F. M. Sioson's work on the ideal theory in ternary semigroups, conducted in 1965, significantly contributed to and enriched the field\cite{sioson1965ideal}. Further, prime ideals detailed study was done by S. Kar and B. K. Maity \cite{karideals}.
 Green's relations, originally studied by J. A. Green\cite{green1951structure}, are foundational to semigroup theory and have been adapted for $\Gamma$-semigroups by Ronnason Chinram and P. Siammai\cite{article}. The theoretical groundwork for ternary semigroups was laid by M. L. Santiago and Sribala\cite{santiago2010ternary}, who detailed its complexities and mathematical structure, building on the early concepts introduced by Lehmer in 1932\cite{lehmer1932ternary}. Simple, minimal ideals and completely simple semigroups are studied in ternary semigroups by Sribala and G. Sheeja in 2013\cite{sheeja2013simple}.\\
In this study, we explore the concept of $\Gamma$-semigroups. Section 2 presents fundamental definitions essential for this work. In section 3, we further studied the concept of $\Gamma$-simple semigroups, outlining their fundamental properties and defining $\Gamma$-0-simple semigroups, providing characterizations and connected results.\\
Section 4 delves into the concept of o-least $\Gamma$-ideals, presenting theorems related to $\Gamma$-Left Ideals. Section 5 explores the concept of completely 0-simple $\Gamma$-semigroups, demonstrating that such a semigroup is its own $\mathcal{D}$-class and is regular. We further investigate the necessary and sufficient conditions that guarantee a 0-simple semigroup $(\mathcal{T}, \Gamma)$ possesses a completely 0-simple structure. Through rigorous analysis, we establish that the presence of at least one 0-least $\Gamma$-left ideal (0-least $\Gamma$-LId) and one 0-least $\Gamma$-right ideal (0-least $\Gamma$-RId) is both a necessary and sufficient condition for this classification.
In section 6, we introduce the concept of $\Gamma$-prime ideals and provide conditions under which a $\Gamma$-2-sided ideal becomes a $\Gamma$-prime ideal. To illustrate these concepts, we present specific examples that highlight the properties and behavior of $\Gamma$-prime ideals.
Furthermore, we establish a sufficient condition for a commutative $\Gamma$-semigroup to be $\Gamma$-prime. Additionally, counterexamples are provided to demonstrate that the union and intersection of $\Gamma$-prime ideals do not necessarily result in a $\Gamma$-prime ideal. To address this, we also derive conditions under which the union and intersection of $\Gamma$-prime ideals retain the $\Gamma$-prime property.
\section{Preliminaries}

 \begin{definition} \cite{sen1981proceeding}
 Consider $\mathcal{T} \neq \phi$, $\Gamma \neq \phi$ as a two sets. Then $(\mathcal{T},\Gamma)$ is called a $\Gamma$-semigroup if there is a mapping from $\mathcal{T}\times \Gamma \times \mathcal{T}$ to $\mathcal{T}$ such that $(e \alpha f)\beta g=e\alpha (f \beta g)$ for each $e,f,g \in \mathcal{T},  \alpha,\beta \in \Gamma$.
 \end{definition}
 \begin{example}
Let $\mathcal{I}=$Collection of every integer, $\Gamma=$ Collection of every even integer. Consider $\mathcal{I}\times \Gamma \times \mathcal{I}$ as the usual multiplication.Then $(\mathcal{I},\Gamma)$ is a $\Gamma$-semigroup.
\end{example}
\begin{definition}\cite{dutta1993gamma}
$\mathcal{B} \subseteq \mathcal{T}$ in a $\Gamma$-semigroup $(\mathcal{T},\Gamma)$ is known as 
\begin{enumerate}
\item[i.] $\Gamma$-left ideal($\Gamma$-LId) if $[\mathcal{T}\Gamma\mathcal{B}] \subseteq \mathcal{B}$.
\item[ii.] $\Gamma$-right ideal($\Gamma$-RId) if $[\mathcal{B}\Gamma\mathcal{T}] \subseteq \mathcal{B}$.
\item[iii.] $\Gamma$- 2-sided ideal($\Gamma$-2-Id) if $\mathcal{B}$ is $\Gamma$-LId, $\Gamma$-RId.
\end{enumerate}
\end{definition}
\begin{example}
Consider $\mathcal{T}$ as the collection of every $2 \times 3$ matrices whose entries are from integers. Define $\Gamma$ as the collection of every $3 \times 2$ matrices whose entries are from integers.\\
Consider $\mathcal{I}$ as the collection of every $2 \times 3$ matrices whose entries are from positive even integers.
Then $\mathcal{I}$ is an $\Gamma$-2-Id.
\end{example}
\begin{definition}\cite{sen1986semigroup}
$\mathcal{B}\neq \phi \subseteq \mathcal{T}$ is known as $\Gamma$-subsemigroup in a $\Gamma$-semigroup $(\mathcal{T},\Gamma)$ when $[\mathcal{B}\Gamma\mathcal{B}] \subseteq \mathcal{B}$.
\end{definition}
\begin{definition}\cite{sen1986semigroup}
Consider $\mathcal{B} \neq \phi$ as the subset in a $\Gamma$-semigroup $(\mathcal{T},\Gamma)$. Then,
\begin{enumerate}
\item $\Gamma$-LId generated by $\mathcal{B}=\mathcal{B}\cup [\mathcal{T}\Gamma \mathcal{B}]$.
\item $\Gamma$-RId generated by $\mathcal{B}=\mathcal{B}\cup [\mathcal{B}\Gamma \mathcal{T}]$.
\item $\Gamma$-2-sided ideal generated by $\mathcal{B}=\mathcal{B}\cup [\mathcal{T}\Gamma \mathcal{B}]\cup [\mathcal{B}\Gamma \mathcal{T}]\cup [\mathcal{T}\Gamma\mathcal{B}\Gamma \mathcal{T}]$.
\end{enumerate}
\end{definition}
\begin{definition}\cite{sen1986semigroup}
$e \in \mathcal{T}$ is called a $\Gamma$-idempotent in a $\Gamma$-semigroup $(\mathcal{T},\Gamma)$ when for any $a \in \Gamma$, $[eae] = e$.
\end{definition}
\begin{definition}\cite{dutta1993gamma}
An element a is defined to be the regular member in a $\Gamma$-semigroup $(\mathcal{T},\Gamma)$ when there is an $\alpha \in \Gamma$, $[a\alpha a] = a$.
\end{definition}
\begin{definition}\cite{green1951structure}
Let $e,f \in \mathcal{T}$ of a $\Gamma$-semigroup $(\mathcal{T},\Gamma)$. Then, Green's relation is defined as.
\begin{enumerate}
\item $e \mathcal{L} f$ implies and is implied by $[\mathcal{T}\Gamma e]\cup \{e\}=[\mathcal{T}\Gamma f]\cup \{f\}$.
\item $e \mathcal{R} f$ implies and is implied by $[e\Gamma \mathcal{T}]\cup \{e\}=[f\Gamma \mathcal{T}]\cup \{f\}$.
\item $\mathcal{D}=\mathcal{L}\circ \mathcal{R}$.
\end{enumerate}
\end{definition}
\begin{definition}\cite{sen1986semigroup}
A $\Gamma$-semigroup is termed $\Gamma$-right(left) simple when it has no proper $\Gamma$-RId(LId). A $\Gamma$-semigroup is considered a $\Gamma$-2-sided simple if it has no proper $\Gamma$- 2-Id.
\end{definition}
\begin{theorem}\label{thm2.1}\cite{green1951structure}
 e as a regular member in a $\Gamma$-semigroup $(\mathcal{T},\Gamma)$ implies each member of $\mathcal{D}_a$ is also regular.
\end{theorem}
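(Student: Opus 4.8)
The plan is to show that regularity is constant on $\mathcal{D}$-classes by first proving that it is inherited along each of the two generating relations $\mathcal{L}$ and $\mathcal{R}$ separately, and then gluing these together through the identity $\mathcal{D}=\mathcal{L}\circ\mathcal{R}$. Concretely, writing the regularity of $a$ in the sandwich form $a=a\alpha x\beta a$ for some $x\in\mathcal{T}$ and $\alpha,\beta\in\Gamma$, I would establish two symmetric lemmas: (i) if $a$ is regular and $a\mathcal{R}b$, then $b$ is regular; (ii) if $a$ is regular and $a\mathcal{L}b$, then $b$ is regular. Given these, for an arbitrary $b\in\mathcal{D}_a$ I would use $\mathcal{D}=\mathcal{L}\circ\mathcal{R}$ to pick an intermediate $c$ with $a\mathcal{L}c$ and $c\mathcal{R}b$; lemma (ii) makes $c$ regular, and then lemma (i) makes $b$ regular, which is exactly the assertion.

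For the $\mathcal{R}$-step I would unwind the definition: $a\mathcal{R}b$ means $[a\Gamma\mathcal{T}]\cup\{a\}=[b\Gamma\mathcal{T}]\cup\{b\}$, so that $b\in[a\Gamma\mathcal{T}]\cup\{a\}$ and $a\in[b\Gamma\mathcal{T}]\cup\{b\}$. If $b=a$ there is nothing to prove, so the essential case is $b=a\gamma u$ and $a=b\delta v$ for suitable $\gamma,\delta\in\Gamma$ and $u,v\in\mathcal{T}$. Substituting the regularity relation into $b=a\gamma u$ and reassociating gives $b=(a\alpha x\beta a)\gamma u=a\alpha x\beta(a\gamma u)=a\alpha x\beta b$; replacing the leading $a$ by $b\delta v$ and collapsing the middle product to a single element $w=v\alpha x\in\mathcal{T}$ yields $b=b\delta w\beta b$, exhibiting $b$ as regular. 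The $\mathcal{L}$-step is the left--right dual: from $b=u\gamma a$ and $a=v\delta b$ one gets $b=u\gamma(a\alpha x\beta a)=b\alpha x\beta a=b\alpha x\beta(v\delta b)=b\alpha w'\delta b$ with $w'=x\beta v\in\mathcal{T}$.

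The crux of the argument, and the step I expect to require the most care, is the bookkeeping of the $\Gamma$-associativity so that the chained four-factor products genuinely collapse into a single sandwich $b\,\delta\,w\,\beta\,b$ of the required shape; since the principal-ideal description of Green's relations attaches the element itself as a separate alternative ($\cup\{a\}$, $\cup\{b\}$), one must also dispatch the degenerate possibilities $b=a$ and $a=b$ before the substitution goes through. Once both one-sided lemmas are in place, the composition $\mathcal{D}=\mathcal{L}\circ\mathcal{R}$ closes the proof with no further computation.
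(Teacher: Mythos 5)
The paper never proves this statement—it appears in the Preliminaries as a quoted result with a citation to Green—so there is no in-paper argument to measure yours against. Judged on its own, your proposal is the classical proof (Howie's argument that regularity is a $\mathcal{D}$-class invariant, transported to the $\Gamma$-setting): establish the two one-sided lemmas, dispose of the degenerate alternative $b=a$ that the $\cup\{a\}$ in the principal-ideal description of $\mathcal{L}$ and $\mathcal{R}$ forces you to consider, and compose through $\mathcal{D}=\mathcal{L}\circ\mathcal{R}$. The associativity bookkeeping you single out as the delicate point does go through: from $b=a\gamma u$, $a=b\delta v$ and $a=a\alpha x\beta a$ one gets $b=a\alpha x\beta b=b\,\delta\,(v\alpha x)\,\beta\,b$ with $v\alpha x\in\mathcal{T}$, and dually on the left.

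The one genuine problem is a mismatch with the paper's definitions. You write regularity in the sandwich form $a=a\alpha x\beta a$ with $x\in\mathcal{T}$, i.e. $a\in[a\Gamma\mathcal{T}\Gamma a]$, but the definition of a regular member given in the Preliminaries reads $[a\alpha a]=a$ for some $\alpha\in\Gamma$, with no intermediate element of $\mathcal{T}$. Your terminal expression $b=b\delta w\beta b$ is not of that shape, and there is no way to collapse $\delta w\beta$ into a single element of $\Gamma$, so relative to the paper's literal definition the proof does not close. Worse, under that literal definition the theorem is false: take $\Gamma=\{\gamma\}$ a singleton acting as ordinary multiplication on a nontrivial group $G$; then $[a\gamma a]=a$ says $a$ is idempotent, $G$ is a single $\mathcal{D}$-class, and only the identity is ``regular.'' So your argument is correct for the standard (and surely intended) notion $a\in[a\Gamma\mathcal{T}\Gamma a]$, but you must say explicitly that this is the notion you are using and flag the discrepancy with the paper's stated definition, since no proof can succeed under the latter.
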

\section{Simple \texorpdfstring{$\Gamma$}{Gamma}-Semigroup}

\begin{lemma}
The statements below are equivalent.
\begin{enumerate}
\item A $\Gamma$-semigroup is left simple.
\item For every element $e$ in $\mathcal{T}$, the set $[\mathcal{T}\Gamma e]$ is equal to $\mathcal{T}$.
\end{enumerate}
\end{lemma}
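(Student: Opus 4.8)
The plan is to prove the two directions separately, and the whole argument hinges on one preliminary observation: for any fixed $e \in \mathcal{T}$, the set $[\mathcal{T}\Gamma e]$ is itself a $\Gamma$-LId. I would verify this directly from the defining axiom in Definition~1: by associativity, $[\mathcal{T}\Gamma[\mathcal{T}\Gamma e]] = [[\mathcal{T}\Gamma\mathcal{T}]\Gamma e] \subseteq [\mathcal{T}\Gamma e]$, using only that $[\mathcal{T}\Gamma\mathcal{T}] \subseteq \mathcal{T}$. Nonemptiness is automatic since $\mathcal{T} \neq \phi$ and $\Gamma \neq \phi$ and the structure map lands in $\mathcal{T}$. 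This step is the technical backbone of the proof, and it is where I would be most careful, since the containment relies squarely on the one associativity condition available for $\Gamma$-semigroups.

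For the implication $(1) \Rightarrow (2)$, I would assume $(\mathcal{T},\Gamma)$ is left simple and fix an arbitrary $e \in \mathcal{T}$. By the preliminary observation, $[\mathcal{T}\Gamma e]$ is a $\Gamma$-LId. Left simplicity says the only $\Gamma$-LId is $\mathcal{T}$ itself (no proper $\Gamma$-LId), so I immediately conclude $[\mathcal{T}\Gamma e] = \mathcal{T}$, which is exactly statement~(2).

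For the converse $(2) \Rightarrow (1)$, I would assume $[\mathcal{T}\Gamma e] = \mathcal{T}$ for every $e \in \mathcal{T}$ and take an arbitrary $\Gamma$-LId $\mathcal{B}$. Picking any $e \in \mathcal{B}$ (possible since a $\Gamma$-LId is nonempty), the left-ideal condition $[\mathcal{T}\Gamma\mathcal{B}] \subseteq \mathcal{B}$ gives the chain $\mathcal{T} = [\mathcal{T}\Gamma e] \subseteq [\mathcal{T}\Gamma\mathcal{B}] \subseteq \mathcal{B} \subseteq \mathcal{T}$, forcing $\mathcal{B} = \mathcal{T}$. Since every $\Gamma$-LId coincides with $\mathcal{T}$, there is no proper $\Gamma$-LId, so $(\mathcal{T},\Gamma)$ is left simple.

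Overall this is a short equivalence rather than a deep result, so I do not expect a genuine obstacle; the only point demanding attention is the preliminary claim that $[\mathcal{T}\Gamma e]$ is closed under left multiplication, together with the implicit convention that ideals are nonempty (so that an element $e$ can be selected from $\mathcal{B}$ in the converse). Once those are pinned down, both implications follow by a one-line containment argument.
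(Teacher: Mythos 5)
Your proof is correct and is the standard argument; note that the paper states this lemma without any proof at all, so there is nothing to compare against, but your reasoning (that $[\mathcal{T}\Gamma e]$ is a $\Gamma$-LId by associativity, hence equals $\mathcal{T}$ under left simplicity, and conversely $\mathcal{T}=[\mathcal{T}\Gamma e]\subseteq[\mathcal{T}\Gamma\mathcal{B}]\subseteq\mathcal{B}$ for any $\Gamma$-LId $\mathcal{B}$) is exactly the style the authors use in their proof of Proposition~\ref{prop3.1}. Your care about the nonemptiness convention for ideals is a reasonable precaution that the paper leaves implicit.
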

\begin{corollary}
The necessary and sufficient condition for a $\Gamma$-semigroup to be left $\Gamma$-simple is for any elements $e, f \in \mathcal{T}$, there is $g \in \Gamma$, $h \in \mathcal{T}$ such that $[egh] = f$.
\end{corollary}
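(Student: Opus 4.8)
The plan is to read the corollary off the preceding lemma, which already identifies left $\Gamma$-simplicity with the set identity $[\mathcal{T}\Gamma e]=\mathcal{T}$ for every $e\in\mathcal{T}$. The corollary is simply the elementwise reformulation of that identity, so the entire argument is a translation between a statement about a set of products and a statement about the existence of individual factors $g\in\Gamma$ and $h\in\mathcal{T}$ realizing each target $f$ from $e$. I would prove the two implications separately, invoking the lemma at both ends and doing no further structural work.

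For necessity, I would assume $(\mathcal{T},\Gamma)$ is left $\Gamma$-simple and fix arbitrary $e,f\in\mathcal{T}$. By the lemma $[\mathcal{T}\Gamma e]=\mathcal{T}$, so $f$ belongs to this product set, and by the definition of the product there are a $\Gamma$-factor and a $\mathcal{T}$-factor that combine with $e$ to produce $f$; recording these as $g\in\Gamma$ and $h\in\mathcal{T}$ gives the required $[egh]=f$. For sufficiency, I would assume that for all $e,f$ such $g,h$ exist. Fixing $e$, the hypothesis exhibits every $f\in\mathcal{T}$ as a product built from $e$ together with factors from $\Gamma$ and $\mathcal{T}$, so $\mathcal{T}\subseteq[\mathcal{T}\Gamma e]$; the reverse inclusion is automatic since the $\Gamma$-operation maps into $\mathcal{T}$. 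Hence $[\mathcal{T}\Gamma e]=\mathcal{T}$ for every $e$, and the lemma returns left $\Gamma$-simplicity.

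The step I expect to be the main obstacle is not any deep structural fact but the careful bookkeeping in this set-to-element translation: one must align the elementwise product $[egh]$ with membership in the set $[\mathcal{T}\Gamma e]$, keeping the two $\mathcal{T}$-entries and the single $\Gamma$-entry in their correct positions and invoking the associativity axiom $(e\alpha f)\beta g=e\alpha(f\beta g)$ so that the triple product $[egh]$ is unambiguous. Once the placement of $e$ relative to the factors $g$ and $h$ is pinned down consistently in both directions, each implication is immediate and nothing beyond the lemma is needed.
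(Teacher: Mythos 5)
Your overall strategy is the intended one: the corollary carries no independent content beyond the preceding lemma, and the paper itself supplies no proof, evidently treating it as exactly the set-to-element translation you describe. But the bookkeeping step you yourself single out as the crux is where your write-up actually breaks, and it does not resolve ``immediately.'' Membership $f\in[\mathcal{T}\Gamma e]$ produces a factorization $f=[hge]$ with $h\in\mathcal{T}$, $g\in\Gamma$, and with $e$ in the \emph{rightmost} slot; the corollary's equation $[egh]=f$ places $e$ in the \emph{leftmost} slot. In a noncommutative $\Gamma$-semigroup these are not interchangeable: ``for all $e,f$ there exist $g,h$ with $[egh]=f$'' says $[e\Gamma\mathcal{T}]=\mathcal{T}$ for every $e$, which is the right-simple condition, not the left-simple one. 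Consequently your necessity step (``recording these as $g$ and $h$ gives the required $[egh]=f$'') silently swaps the position of $e$, and your sufficiency step deduces $\mathcal{T}\subseteq[\mathcal{T}\Gamma e]$ from $f=[egh]$, which is a non sequitur: what actually follows from that hypothesis is $\mathcal{T}\subseteq[e\Gamma\mathcal{T}]$.

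The honest resolution is to note that the corollary as printed is misstated (presumably a slip in the source): the correct elementwise form of the lemma is ``for all $e,f\in\mathcal{T}$ there exist $h\in\mathcal{T}$, $g\in\Gamma$ with $[hge]=f$,'' and with that reading both of your implications go through verbatim. Read literally, the printed statement is not a corollary of the lemma at all, so a proof must either fix the slot of $e$ or add a hypothesis such as commutativity. One smaller point: associativity plays no role here. The expression $[egh]$ is a single application of the ternary map $\mathcal{T}\times\Gamma\times\mathcal{T}\to\mathcal{T}$, so there is nothing to disambiguate; the axiom $(e\alpha f)\beta g=e\alpha(f\beta g)$ only matters for composite products with two $\Gamma$-entries, and invoking it here suggests the positional issue was handled by gesture rather than by checking the slots.
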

\begin{proposition}\label{prop3.1}
A $\Gamma$-semigroup is a  simple $\Gamma$-semigroup that implies and is implied by for every $e \in \mathcal{T}$ $[\mathcal{T}\Gamma e\Gamma \mathcal{T}]=\mathcal{T}$.
\end{proposition}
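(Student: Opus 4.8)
The plan is to prove the two implications separately, reading ``simple'' as $\Gamma$-2-sided simple in the sense of the Preliminaries (a $\Gamma$-semigroup with no proper $\Gamma$-2-Id). The whole argument rests on two facts: that $[\mathcal{T}\Gamma e\Gamma\mathcal{T}]$ is itself a $\Gamma$-2-Id, and that any $\Gamma$-2-Id containing an element $b$ must contain all of $[\mathcal{T}\Gamma b\Gamma\mathcal{T}]$.

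For the forward direction, I would assume $(\mathcal{T},\Gamma)$ is simple and fix an arbitrary $e\in\mathcal{T}$. The key observation is that $[\mathcal{T}\Gamma e\Gamma\mathcal{T}]$ is a $\Gamma$-2-Id: using the associativity of Definition 2.1 and the closure $[\mathcal{T}\Gamma\mathcal{T}]\subseteq\mathcal{T}$, one checks $[\mathcal{T}\Gamma[\mathcal{T}\Gamma e\Gamma\mathcal{T}]]=[\mathcal{T}\Gamma\mathcal{T}\Gamma e\Gamma\mathcal{T}]\subseteq[\mathcal{T}\Gamma e\Gamma\mathcal{T}]$, giving the left-ideal property, and symmetrically $[[\mathcal{T}\Gamma e\Gamma\mathcal{T}]\Gamma\mathcal{T}]\subseteq[\mathcal{T}\Gamma e\Gamma\mathcal{T}]$ for the right-ideal property. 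Since $\mathcal{T}\neq\phi$, $\Gamma\neq\phi$, and the product is total, this set is nonempty, so by simplicity it cannot be proper and must equal $\mathcal{T}$.

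Conversely, I would assume $[\mathcal{T}\Gamma e\Gamma\mathcal{T}]=\mathcal{T}$ for every $e\in\mathcal{T}$ and let $\mathcal{B}$ be any nonempty $\Gamma$-2-Id. Picking $b\in\mathcal{B}$, the left-ideal property gives $[\mathcal{T}\Gamma b]\subseteq[\mathcal{T}\Gamma\mathcal{B}]\subseteq\mathcal{B}$, and then reassociating via Definition 2.1 together with the right-ideal property gives $[\mathcal{T}\Gamma b\Gamma\mathcal{T}]=[[\mathcal{T}\Gamma b]\Gamma\mathcal{T}]\subseteq[\mathcal{B}\Gamma\mathcal{T}]\subseteq\mathcal{B}$. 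Combining this with the hypothesis yields $\mathcal{T}=[\mathcal{T}\Gamma b\Gamma\mathcal{T}]\subseteq\mathcal{B}\subseteq\mathcal{T}$, so $\mathcal{B}=\mathcal{T}$; hence $\mathcal{T}$ has no proper $\Gamma$-2-Id and is simple.

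I expect the main obstacle to be the bookkeeping rather than any deep idea: namely, verifying carefully that $[\mathcal{T}\Gamma e\Gamma\mathcal{T}]$ is genuinely a two-sided ideal and is nonempty, so that the definition of simplicity actually applies, and reassociating the bracketed products correctly when passing between $[\mathcal{T}\Gamma b\Gamma\mathcal{T}]$ and $[[\mathcal{T}\Gamma b]\Gamma\mathcal{T}]$. Once those containments are set up, both directions reduce to a short inclusion chase.
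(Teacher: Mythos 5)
Your proof is correct, but it follows a genuinely different route from the paper's in the forward direction. The paper argues via left ideals: it invokes $[\mathcal{T}\Gamma e]=\mathcal{T}$ (i.e.\ left simplicity, as in Lemma 3.1), deduces $[\mathcal{T}\Gamma e]\Gamma\mathcal{T}=[\mathcal{T}\Gamma\mathcal{T}]$, and then uses that $[\mathcal{T}\Gamma\mathcal{T}]$ is a $\Gamma$-2-sided ideal to conclude $[\mathcal{T}\Gamma e\Gamma\mathcal{T}]=\mathcal{T}$; this implicitly requires reading ``simple'' as having no proper one-sided ideals either, or else it leans on a corollary that is only stated after the proposition. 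You instead observe directly that $[\mathcal{T}\Gamma e\Gamma\mathcal{T}]$ is a nonempty $\Gamma$-2-Id and appeal only to the absence of proper $\Gamma$-2-sided ideals, which is more economical and avoids any circularity with Corollary 3.2. For the converse, the paper simply says ``the opposite part is obvious,'' whereas you supply the actual argument: any nonempty $\Gamma$-2-Id $\mathcal{B}$ containing $b$ must contain $[\mathcal{T}\Gamma b\Gamma\mathcal{T}]=\mathcal{T}$, hence $\mathcal{B}=\mathcal{T}$. Both directions of your inclusion chase are sound, and your version is the more complete and self-contained of the two.
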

\begin{proof}
Assume $\Gamma$- semigroup $(\mathcal{T},\Gamma)$ is a simple $\Gamma$-semigroup. Since $(\mathcal{T},\Gamma)$ is $\Gamma$ -LId, $[\mathcal{T}\Gamma e]=\mathcal{T}$, so $[\mathcal{T}\Gamma e]\Gamma \mathcal{T}=[\mathcal{T}\Gamma \mathcal{T}]$. Since $[\mathcal{T}\Gamma \mathcal{T}]\subseteq \mathcal{T}$ and $[\mathcal{T}\Gamma \mathcal{T}]$ is a $\Gamma$-2-sided ideal. So, $[\mathcal{T}\Gamma \mathcal{T}]=\mathcal{T}$. Therefore, $[\mathcal{T}\Gamma e \Gamma \mathcal{T}]=\mathcal{T}$. The opposite part is obvious.
\end{proof}
\begin{corollary}
Consider a $\Gamma$- semigroup as a simple $\Gamma$-semigroup. Then, $[\mathcal{T}\Gamma e]=\mathcal{T}$, $[e \Gamma \mathcal{T}]=\mathcal{T}$ for each $e \in \mathcal{T}$.
\end{corollary}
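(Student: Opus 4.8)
The plan is to derive each of the two equalities from the characterization of simplicity already established in Proposition~\ref{prop3.1}, combined with the observation that the one-sided sets in the conclusion are themselves ideals of $(\mathcal{T},\Gamma)$. First I would fix $e\in\mathcal{T}$ and record that $[\mathcal{T}\Gamma e]$ is a $\Gamma$-left ideal: by the associativity axiom and $[\mathcal{T}\Gamma\mathcal{T}]\subseteq\mathcal{T}$ one has $[\mathcal{T}\Gamma[\mathcal{T}\Gamma e]]=[[\mathcal{T}\Gamma\mathcal{T}]\Gamma e]\subseteq[\mathcal{T}\Gamma e]$, and symmetrically $[e\Gamma\mathcal{T}]$ is a $\Gamma$-right ideal. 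Both sets are nonempty, so each is a legitimate candidate to which simplicity can be applied.

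Next I would attempt to upgrade $[\mathcal{T}\Gamma e]$ to a $\Gamma$-2-sided ideal so that the defining property of a simple $\Gamma$-semigroup becomes available. For the missing right-absorption I would compute $[[\mathcal{T}\Gamma e]\Gamma\mathcal{T}]=[\mathcal{T}\Gamma e\Gamma\mathcal{T}]$ and invoke Proposition~\ref{prop3.1}, which gives $[\mathcal{T}\Gamma e\Gamma\mathcal{T}]=\mathcal{T}$ precisely because $(\mathcal{T},\Gamma)$ is simple. Once $[\mathcal{T}\Gamma e]$ is known to be a $\Gamma$-2-sided ideal, the hypothesis that $(\mathcal{T},\Gamma)$ has no proper $\Gamma$-2-Id forces $[\mathcal{T}\Gamma e]=\mathcal{T}$. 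The identical argument on the opposite side, using that $[e\Gamma\mathcal{T}]$ is a $\Gamma$-right ideal together with $[\mathcal{T}\Gamma e\Gamma\mathcal{T}]=\mathcal{T}$, yields $[e\Gamma\mathcal{T}]=\mathcal{T}$; this symmetry means essentially no extra work for the second equality once the first is settled.

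The step I expect to be the main obstacle is exactly this upgrade from one-sided to two-sided. The computation $[[\mathcal{T}\Gamma e]\Gamma\mathcal{T}]=\mathcal{T}$ shows that right-multiplying $[\mathcal{T}\Gamma e]$ by $\mathcal{T}$ \emph{spills out} to all of $\mathcal{T}$, which runs opposite to the absorption $[[\mathcal{T}\Gamma e]\Gamma\mathcal{T}]\subseteq[\mathcal{T}\Gamma e]$ demanded by the definition of a $\Gamma$-right ideal; thus Proposition~\ref{prop3.1} does not by itself certify $[\mathcal{T}\Gamma e]$ as two-sided, and a naive application is circular, since it would presuppose $\mathcal{T}\subseteq[\mathcal{T}\Gamma e]$. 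I would therefore close this gap along one of two routes: either extract from simplicity the stronger statement that every $\Gamma$-left ideal of a simple $(\mathcal{T},\Gamma)$ must coincide with $\mathcal{T}$ and then quote the Lemma to read off $[\mathcal{T}\Gamma e]=\mathcal{T}$, or else invoke the ambient structural hypotheses (regularity, or the presence of a $\Gamma$-idempotent) under which a simple $\Gamma$-semigroup is genuinely left and right simple. Deciding which of these is actually required, and confirming that it is consistent with the notion of \emph{simple} used here, is where the real content of the proof resides.
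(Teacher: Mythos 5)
Your analysis correctly isolates the crux, but the proposal never closes it, and in fact it cannot be closed along either of the routes you sketch. The set $[\mathcal{T}\Gamma e]$ is only a $\Gamma$-left ideal, and (as you observe) the computation $[[\mathcal{T}\Gamma e]\Gamma\mathcal{T}]=[\mathcal{T}\Gamma e\Gamma\mathcal{T}]=\mathcal{T}$ runs in the wrong direction for right-absorption, so Proposition~\ref{prop3.1} cannot upgrade it to a $\Gamma$-2-sided ideal. The deeper problem is that both of your fallback options also fail: if \emph{simple} means ``no proper $\Gamma$-2-sided ideal'' (which is how Proposition~\ref{prop3.1} and Theorem~\ref{thm4.1} use the word), the corollary is not a consequence of the hypothesis at all. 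Take $\Gamma=\{\gamma\}$ and $\mathcal{T}=I\times\Lambda$ with $(i,\lambda)\gamma(j,\mu)=(i,\mu)$, a rectangular band read as a $\Gamma$-semigroup. Then $[\mathcal{T}\Gamma e\Gamma\mathcal{T}]=\mathcal{T}$ for every $e$, so the semigroup is simple in the 2-sided sense, yet $[\mathcal{T}\Gamma(i,\lambda)]=I\times\{\lambda\}\subsetneq\mathcal{T}$ whenever $\Lambda$ has more than one element. This example is regular and consists entirely of idempotents, so your route (b) does not rescue the argument; and route (a), that every $\Gamma$-left ideal of a simple $(\mathcal{T},\Gamma)$ equals $\mathcal{T}$, is exactly the false claim the example refutes.

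For comparison: the paper gives no proof of this corollary, and its proof of Proposition~\ref{prop3.1} in fact opens by \emph{assuming} $[\mathcal{T}\Gamma e]=\mathcal{T}$, so the logical dependence in the source runs backwards from what your plan presumes. The statement is true only if ``simple'' is read as ``$\Gamma$-left and $\Gamma$-right simple,'' in which case it is immediate from the Lemma at the start of Section~3 and needs neither Proposition~\ref{prop3.1} nor any one-sided-to-two-sided upgrade. So your instinct that the upgrade is the real obstruction is correct, but the proposal does not overcome it, and no argument can without strengthening the hypothesis.
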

\begin{example}
    Consider $\mathcal{T}=\{i, -i, 1, -1\}$ and $\Gamma=\{i, -i\}$. $(\mathcal{T}, \Gamma)$ under usual multiplication is a simple $\Gamma$-semigroup.
\end{example}
\begin{definition}
 $u \in \mathcal{T}$ is a zero member denoted by $0$ when $[efu]=[ufe]=u$ for all $e \in \mathcal{T}$ and $f \in \Gamma$.\\
If a $\Gamma$- semigroup $(\mathcal{T},\Gamma)$ doesn't contain 0, Then we can adjoin 0 with $(\mathcal{T},\Gamma)$ by $[efg]=0$ if any of e,g is zero members, $f \in \Gamma$ and $[0f0]=0$ for all $f \in \Gamma$.
Clearly, $\{0\}$ is a $\Gamma$-2 sided ideal of $(\mathcal{T},\Gamma)$.
\end{definition}
\begin{definition}
A $\Gamma$-semigroup $(\mathcal{T},\Gamma)$ that includes zero is known as right(left) 0-simple if $[\mathcal{T}\Gamma\mathcal{T}] \neq 0$ and has no proper $\Gamma$-RId(LId). A $\Gamma$- semigroup $(\mathcal{T},\Gamma)$ 
 is a 0-simple if it possesses no proper $\Gamma$- 2-Id and $[\mathcal{T}\Gamma\mathcal{T}] \neq 0$.
 \end{definition}

\begin{theorem}\label{thm3.1}
The following statements are equivalent for a $\Gamma$-semigroup $(\mathcal{T}, \Gamma)$:
\begin{enumerate}
    \item[i.] $(\mathcal{T}, \Gamma)$ is a 0-simple $\Gamma$-semigroup.
    \item[ii.] For every non-zero $e \in \mathcal{T}$, $[\mathcal{T} \Gamma e \Gamma \mathcal{T}] = \mathcal{T}$.
\end{enumerate}
\end{theorem}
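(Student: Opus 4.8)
The plan is to mirror the classical characterization of $0$-simple semigroups, transposing the ideal-theoretic argument into the bracket notation for $\Gamma$-products, and to prove the two implications separately. Throughout I would use associativity of the $\Gamma$-operation and the fact (from the generating-set definition in Section~2) that the $\Gamma$-2-sided ideal generated by a nonzero $e$ is $\mathcal{J}(e)=\{e\}\cup[\mathcal{T}\Gamma e]\cup[e\Gamma\mathcal{T}]\cup[\mathcal{T}\Gamma e\Gamma\mathcal{T}]$.

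For $(ii)\Rightarrow(i)$, I would first check $[\mathcal{T}\Gamma\mathcal{T}]\neq 0$: picking any nonzero $e$, the hypothesis gives $\mathcal{T}=[\mathcal{T}\Gamma e\Gamma\mathcal{T}]\subseteq[\mathcal{T}\Gamma\mathcal{T}]$, so $[\mathcal{T}\Gamma\mathcal{T}]=\mathcal{T}\neq 0$. To see there is no proper $\Gamma$-2-sided ideal, I would take an arbitrary nonzero $\Gamma$-2-Id $\mathcal{A}$, choose a nonzero $e\in\mathcal{A}$, and use the ideal property together with the hypothesis: $\mathcal{T}=[\mathcal{T}\Gamma e\Gamma\mathcal{T}]\subseteq[\mathcal{T}\Gamma\mathcal{A}\Gamma\mathcal{T}]\subseteq\mathcal{A}$, forcing $\mathcal{A}=\mathcal{T}$. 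This direction is essentially immediate.

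For the harder direction $(i)\Rightarrow(ii)$, the key preliminary step is to show $[\mathcal{T}\Gamma\mathcal{T}]=\mathcal{T}$. Since $[\mathcal{T}\Gamma\mathcal{T}]$ is a $\Gamma$-2-sided ideal and is nonzero by the definition of $0$-simplicity, $0$-simplicity forces $[\mathcal{T}\Gamma\mathcal{T}]=\mathcal{T}$. Next, fixing a nonzero $e$, I would observe that $e\in\mathcal{J}(e)$ with $e\neq 0$, so $0$-simplicity gives $\mathcal{J}(e)=\mathcal{T}$.

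The crux is to leverage $[\mathcal{T}\Gamma\mathcal{T}]=\mathcal{T}$ to upgrade $\mathcal{J}(e)=\mathcal{T}$ into $[\mathcal{T}\Gamma e\Gamma\mathcal{T}]=\mathcal{T}$, and I would do this by computing $[\mathcal{T}\Gamma\mathcal{J}(e)\Gamma\mathcal{T}]$ in two ways. On one side, substituting $\mathcal{J}(e)=\mathcal{T}$ and applying $[\mathcal{T}\Gamma\mathcal{T}]=\mathcal{T}$ twice (via associativity $[[\mathcal{T}\Gamma\mathcal{T}]\Gamma\mathcal{T}]=[\mathcal{T}\Gamma[\mathcal{T}\Gamma\mathcal{T}]]$) yields $[\mathcal{T}\Gamma\mathcal{J}(e)\Gamma\mathcal{T}]=[\mathcal{T}\Gamma\mathcal{T}\Gamma\mathcal{T}]=\mathcal{T}$. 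On the other side, distributing over the four pieces of $\mathcal{J}(e)$ and absorbing the extra factors with $[\mathcal{T}\Gamma\mathcal{T}]\subseteq\mathcal{T}$ shows every piece sits inside $[\mathcal{T}\Gamma e\Gamma\mathcal{T}]$; for instance $[\mathcal{T}\Gamma[\mathcal{T}\Gamma e]\Gamma\mathcal{T}]=[[\mathcal{T}\Gamma\mathcal{T}]\Gamma e\Gamma\mathcal{T}]\subseteq[\mathcal{T}\Gamma e\Gamma\mathcal{T}]$, and likewise for the $[e\Gamma\mathcal{T}]$ and $[\mathcal{T}\Gamma e\Gamma\mathcal{T}]$ summands. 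Combining the two computations gives $\mathcal{T}=[\mathcal{T}\Gamma\mathcal{J}(e)\Gamma\mathcal{T}]\subseteq[\mathcal{T}\Gamma e\Gamma\mathcal{T}]\subseteq\mathcal{T}$, hence equality. I expect this final step to be the main obstacle: it requires careful bookkeeping of associativity and repeated use of the absorption $[\mathcal{T}\Gamma\mathcal{T}]\subseteq\mathcal{T}$ so that each of the four summands collapses into $[\mathcal{T}\Gamma e\Gamma\mathcal{T}]$, whereas the rest of the argument is routine.
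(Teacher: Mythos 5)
Your proposal is correct in both directions, and the forward direction takes a genuinely different route from the paper's. Both you and the paper begin the same way: $[\mathcal{T}\Gamma\mathcal{T}]$ is a nonzero $\Gamma$-2-sided ideal, hence equals $\mathcal{T}$, so $[\mathcal{T}\Gamma\mathcal{T}\Gamma\mathcal{T}]=\mathcal{T}$. After that the arguments diverge. The paper follows the classical dichotomy: $[\mathcal{T}\Gamma e\Gamma\mathcal{T}]$ is itself a $\Gamma$-2-sided ideal, hence is $0$ or $\mathcal{T}$, and the case $0$ is excluded by introducing the annihilator set $\mathcal{H}=\{h\in\mathcal{T}:[\mathcal{T}\Gamma h\Gamma\mathcal{T}]=0\}$ (mis-typeset in the paper as a subset of $\Gamma$, with several occurrences of $e$ also typo'd into $\mathcal{T}$), showing $\mathcal{H}$ is a nonzero $\Gamma$-2-sided ideal containing $e$, concluding $\mathcal{H}=\mathcal{T}$ and hence $[\mathcal{T}\Gamma\mathcal{T}\Gamma\mathcal{T}]=0$, contradicting the first step. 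You instead use the principal ideal $\mathcal{J}(e)$, which $0$-simplicity forces to equal $\mathcal{T}$, and then collapse $[\mathcal{T}\Gamma\mathcal{J}(e)\Gamma\mathcal{T}]$ into $[\mathcal{T}\Gamma e\Gamma\mathcal{T}]$ by absorbing the extra $[\mathcal{T}\Gamma\mathcal{T}]$ factors, so that the sandwich $\mathcal{T}=[\mathcal{T}\Gamma\mathcal{T}\Gamma\mathcal{T}]=[\mathcal{T}\Gamma\mathcal{J}(e)\Gamma\mathcal{T}]\subseteq[\mathcal{T}\Gamma e\Gamma\mathcal{T}]\subseteq\mathcal{T}$ finishes. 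Your route buys an argument with no auxiliary set and no case split --- every step is a containment check --- at the cost of the four-summand bookkeeping; the paper's route needs only the single ideal $[\mathcal{T}\Gamma e\Gamma\mathcal{T}]$ plus the annihilator trick. Both are sound, and the converse directions coincide (yours is simply written out in full where the paper only sketches it); if anything your version is the cleaner of the two as written, since it never touches the typographical slips that obscure the paper's argument.
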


\begin{proof}
\textbf{(i) $\Rightarrow$ (ii):}  
Suppose $(\mathcal{T}, \Gamma)$ is a 0-simple $\Gamma$-semigroup. By definition, $[\mathcal{T} \Gamma \mathcal{T}] \neq 0$.  

Since $[\mathcal{T} \Gamma \mathcal{T}]$ is a $\Gamma$-2-sided ideal, it follows that  
\[
\mathcal{T} = [\mathcal{T} \Gamma \mathcal{T}] = [\mathcal{T} \Gamma \mathcal{T} \Gamma \mathcal{T}].
\]

Now consider any non-zero element $e \in \mathcal{T}$. Since $[\mathcal{T} \Gamma e]$ is a $\Gamma$-left ideal and $[e \Gamma \mathcal{T}]$ is a $\Gamma$-right ideal, we have two possibilities:  
\begin{enumerate}
    \item $[\mathcal{T} \Gamma \mathcal{T} \Gamma \mathcal{T}] = 0$, or  
    \item $[\mathcal{T} \Gamma \mathcal{T} \Gamma \mathcal{T}] = \mathcal{T}$.  
\end{enumerate}

If $[\mathcal{T} \Gamma \mathcal{T} \Gamma \mathcal{T}] = 0$, consider the set  
\[
\mathcal{H} = \{h \in \Gamma : [\mathcal{T} h \mathcal{T}] = 0\}.
\]  
Since $e \neq 0$, $\mathcal{H} \neq 0$ and forms a $\Gamma$-2-sided ideal. By the simplicity of $(\mathcal{T}, \Gamma)$, this implies $\mathcal{H} = \mathcal{T}$.  

However, this would lead to $[\mathcal{T} \Gamma \mathcal{T}] = 0$, which contradicts the assumption that $(\mathcal{T}, \Gamma)$ is 0-simple. Therefore,  
\[
[\mathcal{T} \Gamma \mathcal{T} \Gamma \mathcal{T}] = \mathcal{T} \quad \text{for all non-zero } e \in \mathcal{T}.
\]  

\textbf{(ii) $\Rightarrow$ (i):}  
Conversely, suppose $[\mathcal{T} \Gamma e \Gamma \mathcal{T}] = \mathcal{T}$ for all non-zero $e \in \mathcal{T}$. This implies that any 2-sided ideal generated by a non-zero element is equal to $\mathcal{T}$.  

Thus, $(\mathcal{T}, \Gamma)$ is a 0-simple $\Gamma$-semigroup.  
\end{proof}

 \begin{proposition}
 The $\Gamma$-semigroup  $(\mathcal{T},\Gamma)$ is a 0-simple $\Gamma$-semigroup implies for every $e\neq 0 \in \Gamma$, $[\mathcal{T}e\mathcal{T}]=\mathcal{T}$.
 \end{proposition}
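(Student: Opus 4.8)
The plan is to recognise $[\mathcal{T}e\mathcal{T}]$ as a $\Gamma$-$2$-sided ideal and then let $0$-simplicity do the work, exactly as in Theorem~\ref{thm3.1} but with the fixed operator $e \in \Gamma$ playing the role that a fixed element of $\mathcal{T}$ played there.

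First I would check that, for a fixed $e \in \Gamma$, the set $[\mathcal{T}e\mathcal{T}] = \{[aeb] : a,b \in \mathcal{T}\}$ is a $\Gamma$-$2$-sided ideal of $(\mathcal{T},\Gamma)$. Both inclusions are immediate from the associative law. For the left-ideal condition, take $t \in \mathcal{T}$, $\gamma \in \Gamma$ and $[aeb] \in [\mathcal{T}e\mathcal{T}]$; then $[t\gamma[aeb]] = [[t\gamma a]eb]$, and since $[t\gamma a] \in \mathcal{T}$ this lies in $[\mathcal{T}e\mathcal{T}]$. Symmetrically, $[[aeb]\gamma t] = [ae[b\gamma t]] \in [\mathcal{T}e\mathcal{T}]$ gives the right-ideal condition. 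As $\mathcal{T} \neq \phi$ the set is non-empty, so it is a genuine $\Gamma$-$2$-sided ideal.

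Next I would invoke $0$-simplicity: since $(\mathcal{T},\Gamma)$ has no proper $\Gamma$-$2$-sided ideal, every such ideal is either $\{0\}$ or $\mathcal{T}$, and in particular $[\mathcal{T}e\mathcal{T}] \in \{\{0\}, \mathcal{T}\}$. The desired conclusion $[\mathcal{T}e\mathcal{T}] = \mathcal{T}$ then reduces entirely to ruling out the collapse $[\mathcal{T}e\mathcal{T}] = \{0\}$ for $e \neq 0$, after which equality with $\mathcal{T}$ is automatic.

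The hard part will be this last exclusion, since it is the only place where the hypothesis $e \neq 0$ can enter. Following the scheme of Theorem~\ref{thm3.1}, I would examine the collection $\mathcal{H} = \{h \in \Gamma : [\mathcal{T}h\mathcal{T}] = 0\}$ of operators annihilating $\mathcal{T}$ on both sides and try to show $e \notin \mathcal{H}$. The tempting route is to say that if all operators annihilated then $[\mathcal{T}\Gamma\mathcal{T}] = \bigcup_{h \in \Gamma}[\mathcal{T}h\mathcal{T}] = 0$ would contradict the defining condition $[\mathcal{T}\Gamma\mathcal{T}] \neq 0$ of a $0$-simple $\Gamma$-semigroup; but a single annihilating $e \neq 0$ need not force every operator to annihilate, so this observation does not by itself close the argument. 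The genuine obstacle, therefore, is to pin down the precise sense in which the zero of $\Gamma$ is the unique two-sided annihilator of $\mathcal{T}$ in a $0$-simple $\Gamma$-semigroup, thereby guaranteeing that no non-zero $e$ can lie in $\mathcal{H}$. By contrast, the ideal-theoretic manipulations of the first two paragraphs are entirely routine.
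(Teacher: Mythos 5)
Your first two paragraphs are fine: $[\mathcal{T}e\mathcal{T}]$ is a $\Gamma$-$2$-sided ideal by associativity, and $0$-simplicity then forces $[\mathcal{T}e\mathcal{T}]$ to be $\{0\}$ or $\mathcal{T}$. But the proof is not finished, and you say so yourself: you never rule out $[\mathcal{T}e\mathcal{T}]=\{0\}$, and that exclusion is the entire content of the proposition. The obstacle you flag is real, not merely presentational. The set $\mathcal{H}=\{h\in\Gamma:[\mathcal{T}h\mathcal{T}]=0\}$ lives in $\Gamma$, which carries no ideal structure in this paper, so there is no analogue of ``a nonzero ideal equals $\mathcal{T}$'' available to force $\mathcal{H}$ to collapse. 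Concretely, take $\mathcal{T}=G\cup\{0\}$ for a group $G$ and $\Gamma=\{\alpha,\beta\}$, with $[a\alpha b]=ab$ (group multiplication, extended by $0$) and $[a\beta b]=0$ for all $a,b$. This is associative and $0$-simple (any $2$-sided ideal containing a nonzero $a$ contains $[\mathcal{T}\alpha a]=\mathcal{T}$, and $[\mathcal{T}\Gamma\mathcal{T}]=\mathcal{T}\neq 0$), yet $[\mathcal{T}\beta\mathcal{T}]=\{0\}$. Since the paper never defines what ``$0\in\Gamma$'' means, nothing excludes $\beta$, so the gap you left open cannot be closed by a cleverer argument along your lines: it requires an extra hypothesis (essentially that $e$ is not a two-sided annihilator of $\mathcal{T}$, which is the conclusion in disguise).

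For comparison, the paper's own proof takes a different and even less satisfactory route: it asserts that an arbitrary $x\in\mathcal{T}$ factors as $x=[x_{1}ey_{1}]$ and concludes $x\in[\mathcal{T}e\mathcal{T}]$, offering no justification for that factorisation --- which is precisely the point at issue. Your reduction via the ideal property is the cleaner strategy, and your explicit identification of where it breaks down is more informative than the paper's argument; but as submitted, neither establishes the claim.
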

 \begin{proof}
 Assume $\Gamma$-semigroup  $(\mathcal{T},\Gamma)$ is a 0-simple $\Gamma$-semigroup. Then, $[\mathcal{T}\Gamma e] \cup [e \Gamma \mathcal{T}]=\mathcal{T}$ for all non-zero $e \in \mathcal{T}$. Clearly, $[\mathcal{T}e\mathcal{T}]\subseteq \mathcal{T}$ Let $x \in \mathcal{T}$. Then $x=[x_{1}ay_{1}]$.\\
 This implies $x \in [\mathcal{T}e\mathcal{T}]$.\\
 So, $[\mathcal{T}e\mathcal{T}]=\mathcal{T}$ for all $e\neq 0 \in \Gamma$.
 \end{proof}
 \begin{theorem}
     Let $(\mathcal{T},\Gamma)$ be a $\Gamma$-semigroup. A subset  $\mathcal{E} \neq \phi$ of $\mathcal{T}$ is said to be $\Gamma$-2-Id if and only if $[\mathcal{T}\Gamma \mathcal{E} \Gamma \mathcal{T}]\subseteq \mathcal{E}$.
 \end{theorem}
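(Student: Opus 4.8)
The plan is to establish the biconditional by proving the two implications separately, and I expect the content to be very lopsided: the forward implication is immediate, while the converse carries essentially all of the difficulty. For the forward implication, suppose $\mathcal{E}$ is a $\Gamma$-2-Id, so that it is at once a $\Gamma$-LId and a $\Gamma$-RId, i.e. $[\mathcal{T}\Gamma\mathcal{E}]\subseteq\mathcal{E}$ and $[\mathcal{E}\Gamma\mathcal{T}]\subseteq\mathcal{E}$. Regrouping with the associativity axiom $(x\alpha y)\beta z = x\alpha(y\beta z)$, I would write $[\mathcal{T}\Gamma\mathcal{E}\Gamma\mathcal{T}] = [\mathcal{T}\Gamma(\mathcal{E}\Gamma\mathcal{T})]\subseteq[\mathcal{T}\Gamma\mathcal{E}]\subseteq\mathcal{E}$, applying the right-ideal containment first and then the left-ideal one. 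This uses nothing beyond the two ideal definitions and associativity and is a single line.

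The converse is the step I expect to be the genuine obstacle. Starting from $[\mathcal{T}\Gamma\mathcal{E}\Gamma\mathcal{T}]\subseteq\mathcal{E}$, I must recover the two one-sided containments $[\mathcal{T}\Gamma\mathcal{E}]\subseteq\mathcal{E}$ and $[\mathcal{E}\Gamma\mathcal{T}]\subseteq\mathcal{E}$. The obvious attempt is to take a generic $[t\alpha e]\in[\mathcal{T}\Gamma\mathcal{E}]$ and try to realise it as a member of the sandwich set, but the hypothesis only constrains products in which an element of $\mathcal{E}$ is flanked on \emph{both} sides by a factor from $\mathcal{T}$, whereas $[t\alpha e]$ carries no trailing $\mathcal{T}$-factor. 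In a bare $\Gamma$-semigroup there is no identity available to pad $[t\alpha e]$ into the form $[t\alpha e\beta s]$, so the one-sided containment does not follow formally. I would in fact expect to be able to build small counterexamples to the literal converse (for instance a nilpotent $\Gamma$-semigroup in which every triple product is the zero element), where $[\mathcal{T}\Gamma\mathcal{E}\Gamma\mathcal{T}]\subseteq\mathcal{E}$ holds while $\mathcal{E}$ fails to be one-sided.

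Accordingly, the real work of the converse is to identify the hypothesis under which it is actually valid, and this is where I would concentrate the effort. The cleanest route is to phrase the sandwich condition over the $\Gamma$-semigroup with identities formally adjoined on each side, so that $\mathcal{E}$, $[\mathcal{T}\Gamma\mathcal{E}]$ and $[\mathcal{E}\Gamma\mathcal{T}]$ all arise as degenerate cases of the sandwich set; then both one-sided containments drop out at once. Failing that, I would impose a global condition such as $[\mathcal{T}\Gamma\mathcal{T}]=\mathcal{T}$, which allows each factor to be absorbed into a longer product and lets me rewrite $[t\alpha e]$ with an extra $\mathcal{T}$-factor so as to push it into $[\mathcal{T}\Gamma\mathcal{E}\Gamma\mathcal{T}]$. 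I would test on the examples of Section~2 which of these assumptions is both available and strong enough, and record the precise amended statement accordingly; pinning this down is, in my view, the main obstacle and the genuine mathematical content of the theorem.
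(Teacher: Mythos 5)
Your forward implication coincides with the paper's: from $[\mathcal{T}\Gamma\mathcal{E}]\subseteq\mathcal{E}$ and $[\mathcal{E}\Gamma\mathcal{T}]\subseteq\mathcal{E}$ one absorbs the two outer factors in either order, and this half is unproblematic.

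For the converse you decline to give a proof and instead argue that the implication is false as stated; you are right, and this is where your attempt genuinely diverges from the paper. The paper's proof takes $y=t_{1}\alpha_{1}e_{1}\in[\mathcal{T}\Gamma\mathcal{E}]$, observes that $y\alpha_{2}t_{2}\in[\mathcal{T}\Gamma\mathcal{E}\Gamma\mathcal{T}]\subseteq\mathcal{E}$, manufactures some longer products containing $y$ that lie in $[\mathcal{T}\Gamma\mathcal{E}\Gamma\mathcal{T}]$, and then asserts ``this implies $y\in\mathcal{E}$.'' That last step is a non sequitur: every element actually shown to lie in $\mathcal{E}$ carries a trailing $\Gamma\mathcal{T}$ factor, and nothing permits stripping it off --- exactly the padding obstruction you describe. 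Your counterexample scheme can be made concrete: take $\mathcal{T}=\{0,a,b\}$, $\Gamma=\{\gamma\}$ with $a\gamma a=b$ and all other products equal to $0$ (associativity holds because every product of three elements is $0$), and $\mathcal{E}=\{0,a\}$. Then $[\mathcal{T}\Gamma\mathcal{E}\Gamma\mathcal{T}]=\{0\}\subseteq\mathcal{E}$, yet $a\gamma a=b\notin\mathcal{E}$, so $\mathcal{E}$ is neither a $\Gamma$-LId nor a $\Gamma$-RId. Of your two proposed repairs, assuming (or adjoining) left and right unities is the one that closes the gap in a single line, since $t\alpha e=t\alpha e\delta 1\in[\mathcal{T}\Gamma\mathcal{E}\Gamma\mathcal{T}]$; this is also the setting of \cite{dutta1993gamma}, from which the paper takes its ideal definitions. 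The weaker global hypothesis $[\mathcal{T}\Gamma\mathcal{T}]=\mathcal{T}$ does not by itself supply the needed trailing factor on the specific element $e$, so I would not rely on it. In short: your half-proof is correct, your diagnosis of the converse is correct, and the gap you identify is present in the paper's own argument rather than in yours.
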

 \begin{proof}
     Assume $\mathcal{E} $ is $\Gamma$-2-Id. Then, $[\mathcal{T}\Gamma \mathcal{E}]\subseteq \mathcal{E}$ and $[\mathcal{E} \Gamma \mathcal{T}]\subseteq \mathcal{E}$. \[ [\mathcal{T}\Gamma \mathcal{E}]\subseteq \mathcal{E} \] 
      \[ [\mathcal{T}\Gamma \mathcal{E}\Gamma \mathcal{T}]\subseteq [\mathcal{E}\Gamma \mathcal{T}] \subseteq \mathcal{E} \]
      Conversly, suppose that $[\mathcal{T}\Gamma \mathcal{E} \Gamma \mathcal{T}]\subseteq \mathcal{E}$. Let $y \in [\mathcal{T}\Gamma \mathcal{E}]$.
      \[ y=t_{1}\alpha_{1}e_{1}\]
      \[ y\alpha_{2}t_{2}=t_{1}\alpha_{1}e_{1}\alpha_{2}t_{2} \in [\mathcal{T}\Gamma \mathcal{E} \Gamma \mathcal{T}] \subseteq \mathcal{E}\]
      \[ t_{3}\alpha_{3}y\alpha_{2}t_{2}\alpha_{4}t_{4} \in [\mathcal{T}\Gamma \mathcal{E} \Gamma \mathcal{T}]\]
      \[ t_{3}\alpha_{3}y\alpha_{2}t_{5} \in [\mathcal{T}\Gamma \mathcal{E} \Gamma \mathcal{T}]\]
      This implies \[ y \in \mathcal{E}\]
      So, \[ [\mathcal{T}\Gamma \mathcal{E}]\subseteq \mathcal{E}\]
      Similarly, we can show that \[ [\mathcal{E} \Gamma \mathcal{T}]\subseteq \mathcal{E}\]
 \end{proof}
 \begin{example}
   Consider $\mathcal{T}=\{e,f,g,h,i\}$ and the binary operation is given in the below table.
    \begin{table}[h!]
\centering
\begin{tabular}{|c|c|c|c|c|c|}
\hline
 & e & f & g & h & i \\ \hline
e & e & e & e & e & e \\ \hline
f & e & e & h & e & f \\ \hline
g & e & e & h & e & g \\ \hline
h & e & e & h & e & h \\ \hline
i & e & f & g & h & i \\ \hline

\end{tabular}
\caption{Operation table}
\label{tab:example}
\end{table}
$\{e,h\}$, $\{e, f, h\}$, $\{e, g, h\}$ and $\{e, f, g, h\}$ are $\Gamma$-2-Id.
\end{example}
-
 \section{0-Least  \texorpdfstring{$\Gamma$}{Gamma}-Ideal}
 \begin{definition}
A nonzero $\Gamma$-left ideal (LId) $\mathcal{B}$ of a $\Gamma$-semigroup $(\mathcal{T}, \Gamma)$ is called a 0-least $\Gamma$-left ideal if $\mathcal{B}$ contains only one $\Gamma$-left ideal, namely $(0)$.

Similarly, we can define a 0-least $\Gamma$-right ideal (RId).

A nonzero $\Gamma$-two-sided ideal $\mathcal{B}$ in a $\Gamma$-semigroup $(\mathcal{T}, \Gamma)$ is called a 0-least $\Gamma$-two-sided ideal if $\mathcal{B}$ contains only one $\Gamma$-two-sided ideal, namely $(0)$.
\end{definition}

 \begin{lemma}\label{lemma4.1}
If $\mathcal{H}$ is the least $\Gamma$-LId of $(\mathcal{T},\Gamma)$, and $e \in \Gamma, f \in \mathcal{T}$, then $[\mathcal{H}ef]$ is the least $\Gamma$-LId of $(\mathcal{T},\Gamma)$.
\end{lemma}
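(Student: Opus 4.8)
The plan is to establish two facts: that $[\mathcal{H}ef]$ is a $\Gamma$-left ideal, and that it admits no nonzero $\Gamma$-LId strictly inside it, so that it is $0$-least. First I would verify the ideal property. Taking an arbitrary generator $[t\gamma[hef]]$ with $t\in\mathcal{T}$, $\gamma\in\Gamma$, $h\in\mathcal{H}$, the associativity axiom (applied to the string $t,\gamma,h,e,f$) rewrites it as $[[t\gamma h]ef]$. Since $\mathcal{H}$ is a $\Gamma$-LId we have $[t\gamma h]\in\mathcal{H}$, so the element lies again in $[\mathcal{H}ef]$; hence $[\mathcal{T}\Gamma[\mathcal{H}ef]]\subseteq[\mathcal{H}ef]$.

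For minimality the key device is a \emph{pullback} set. Let $\mathcal{K}$ be any nonzero $\Gamma$-LId with $\mathcal{K}\subseteq[\mathcal{H}ef]$, and set $\mathcal{M}=\{h\in\mathcal{H}:[hef]\in\mathcal{K}\}$. I would first check that $\mathcal{M}$ is a $\Gamma$-LId of $\mathcal{T}$: for $t\in\mathcal{T}$, $\gamma\in\Gamma$, $h\in\mathcal{M}$, associativity gives $[[t\gamma h]ef]=[t\gamma[hef]]\in[\mathcal{T}\Gamma\mathcal{K}]\subseteq\mathcal{K}$, while $[t\gamma h]\in\mathcal{H}$, so $[t\gamma h]\in\mathcal{M}$. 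Next, $\mathcal{M}$ is nonzero: choosing a nonzero $k\in\mathcal{K}\subseteq[\mathcal{H}ef]$, write $k=[hef]$ for some $h\in\mathcal{H}$; since the zero is absorbing we must have $h\neq 0$ (otherwise $k=[0ef]=0$), and this $h$ lies in $\mathcal{M}$. Because $\mathcal{M}$ is a nonzero $\Gamma$-LId contained in the $0$-least ideal $\mathcal{H}$, the defining property of $\mathcal{H}$ forces $\mathcal{M}=\mathcal{H}$. Consequently $[\mathcal{H}ef]=[\mathcal{M}ef]\subseteq\mathcal{K}$, and together with $\mathcal{K}\subseteq[\mathcal{H}ef]$ this yields $\mathcal{K}=[\mathcal{H}ef]$. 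Thus $[\mathcal{H}ef]$ contains no $\Gamma$-LId other than $(0)$ and itself.

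I expect the genuinely delicate point to be ensuring that $[\mathcal{H}ef]$ is itself nonzero, since otherwise it is not a valid $0$-least ideal at all. In the general $0$-minimal theory the honest statement is the dichotomy that either $[\mathcal{H}ef]=(0)$ or $[\mathcal{H}ef]$ is $0$-least, so I would either add the standing hypothesis $[\mathcal{H}ef]\neq(0)$ or prove this dichotomy explicitly; the pullback argument above is unaffected and applies verbatim in the nonzero case. A secondary point to watch throughout is the consistent bookkeeping of the zero element: one repeatedly uses that $0$ belongs to every $\Gamma$-LId and that $0$ is absorbed by the ternary product, which is precisely what makes $\mathcal{M}$ closed, nonzero, and genuinely comparable to $\mathcal{H}$ under the $0$-least condition.
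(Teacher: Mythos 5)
Your proof is correct and follows essentially the same route as the paper's: both hinge on the pullback set $\mathcal{M}=\{h\in\mathcal{H}:[hef]\in\mathcal{K}\}$, showing it is a $\Gamma$-LId inside $\mathcal{H}$ and invoking leastness to force $\mathcal{M}=\mathcal{H}$ and hence $\mathcal{K}=[\mathcal{H}ef]$. Your extra care about the zero element and the dichotomy ``$[\mathcal{H}ef]=(0)$ or $0$-least'' is well placed but belongs to the $0$-least variant, which the paper treats as a separate subsequent lemma; the lemma as stated concerns the (plain) least ideal, where those issues do not arise.
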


\begin{proof}
We know that $[\mathcal{H}ef]$ is a $\Gamma$-LId of $(\mathcal{T},\Gamma)$.\\
Let $\mathcal{A}$ be a $\Gamma$-LId of $(\mathcal{T},\Gamma)$ included in $[\mathcal{H}ef]$.\\
Define $\mathcal{M} = \{m \in \mathcal{H} : [mef] \in \mathcal{A}\}$.\\
Thus, $[\mathcal{M}ef] = \mathcal{A}$.\\
Let $n_1 \in \mathcal{T}, n_2 \in \Gamma$, and $m \in \mathcal{M}$.\\
Then,
\[
[n_{1}n_{2}m]ef = n_{1}n_{2}[mef] \in \mathcal{T}\Gamma\mathcal{A} \subseteq \mathcal{A}.
\]
Therefore, $\mathcal{M}$ is a $\Gamma$-LId of $(\mathcal{T},\Gamma)$ contained in $\mathcal{H}$.\\
Hence, $\mathcal{M} = \mathcal{H}$.\\
This implies $\mathcal{A} = [\mathcal{H}ef]$, and thus $[\mathcal{H}ef]$ is the least $\Gamma$-LId.
\end{proof}

\begin{theorem}\label{thm4.1}
Let $\mathcal{A}$ be the least $\Gamma$-2-Id of $(\mathcal{T},\Gamma)$. Then $\mathcal{A}$ is a simple $\Gamma$-semigroup.
\end{theorem}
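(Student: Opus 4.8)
The plan is to reduce the assertion, via the characterization of simplicity in Proposition \ref{prop3.1}, to the single identity $[\mathcal{A}\Gamma a\Gamma\mathcal{A}] = \mathcal{A}$ for every $a \in \mathcal{A}$. First I would record that $(\mathcal{A},\Gamma)$ is itself a $\Gamma$-semigroup: since $\mathcal{A}$ is a $\Gamma$-2-Id we have $[\mathcal{A}\Gamma\mathcal{A}] \subseteq [\mathcal{A}\Gamma\mathcal{T}] \subseteq \mathcal{A}$, so $\mathcal{A}$ is closed under the operation, and the associativity law is inherited from $\mathcal{T}$. Hence Proposition \ref{prop3.1}, applied to $(\mathcal{A},\Gamma)$, says that $\mathcal{A}$ is a simple $\Gamma$-semigroup exactly when $[\mathcal{A}\Gamma a\Gamma\mathcal{A}] = \mathcal{A}$ for each $a \in \mathcal{A}$. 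That identity is the target.

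Next I would fix $a \in \mathcal{A}$ and handle the two inclusions. The inclusion $[\mathcal{A}\Gamma a\Gamma\mathcal{A}] \subseteq \mathcal{A}$ is immediate, since $a \in \mathcal{A}$ gives $[\mathcal{A}\Gamma a] \subseteq [\mathcal{A}\Gamma\mathcal{A}] \subseteq \mathcal{A}$ and then $[\mathcal{A}\Gamma a\Gamma\mathcal{A}] \subseteq [\mathcal{A}\Gamma\mathcal{A}] \subseteq \mathcal{A}$. The reverse inclusion is the crux, and the key observation is that $[\mathcal{A}\Gamma a\Gamma\mathcal{A}]$ is not merely a subset of $\mathcal{A}$ but is a $\Gamma$-2-Id of the whole ambient $\Gamma$-semigroup $(\mathcal{T},\Gamma)$. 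Using associativity together with the fact that $\mathcal{A}$ absorbs multiplication by $\mathcal{T}$ on both sides,
\[
[\mathcal{T}\Gamma[\mathcal{A}\Gamma a\Gamma\mathcal{A}]] = [[\mathcal{T}\Gamma\mathcal{A}]\Gamma a\Gamma\mathcal{A}] \subseteq [\mathcal{A}\Gamma a\Gamma\mathcal{A}],
\]
and symmetrically $[[\mathcal{A}\Gamma a\Gamma\mathcal{A}]\Gamma\mathcal{T}] = [\mathcal{A}\Gamma a\Gamma[\mathcal{A}\Gamma\mathcal{T}]] \subseteq [\mathcal{A}\Gamma a\Gamma\mathcal{A}]$, so this set is a (nonempty) $\Gamma$-2-Id of $(\mathcal{T},\Gamma)$.

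To finish, I would invoke the leastness of $\mathcal{A}$: being the least $\Gamma$-2-Id of $(\mathcal{T},\Gamma)$ means $\mathcal{A}$ is contained in every $\Gamma$-2-Id, so in particular $\mathcal{A} \subseteq [\mathcal{A}\Gamma a\Gamma\mathcal{A}]$. Combined with the opposite inclusion this yields $[\mathcal{A}\Gamma a\Gamma\mathcal{A}] = \mathcal{A}$; since $a$ was arbitrary, Proposition \ref{prop3.1} gives that $\mathcal{A}$ is simple. (Alternatively, the same identity lets one argue directly: if $\mathcal{B}$ is a $\Gamma$-2-Id of $\mathcal{A}$ and $b \in \mathcal{B}$, then $\mathcal{A} = [\mathcal{A}\Gamma b\Gamma\mathcal{A}] \subseteq [\mathcal{B}\Gamma\mathcal{A}] \subseteq \mathcal{B}$, forcing $\mathcal{B} = \mathcal{A}$.)

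I expect the main obstacle to be precisely the step showing that $[\mathcal{A}\Gamma a\Gamma\mathcal{A}]$ is a two-sided ideal of all of $\mathcal{T}$ rather than only of $\mathcal{A}$: this is where the hypothesis that $\mathcal{A}$ is \emph{two-sided} (so $[\mathcal{T}\Gamma\mathcal{A}] \subseteq \mathcal{A}$ and $[\mathcal{A}\Gamma\mathcal{T}] \subseteq \mathcal{A}$) is essential, and it is what makes leastness applicable in the correct direction. A secondary point to settle is the exact meaning of ``least'': I am reading it as ``contained in every $\Gamma$-2-Id'' (the kernel), which makes the final inclusion automatic; if it instead means merely minimal among nonzero $\Gamma$-2-Ids, I would additionally need to check that $[\mathcal{A}\Gamma a\Gamma\mathcal{A}] \neq (0)$ before applying minimality, treating any zero element separately.
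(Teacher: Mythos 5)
Your proof is correct, and it reaches the key identity $[\mathcal{A}\Gamma a\Gamma\mathcal{A}]=\mathcal{A}$ by a genuinely different route from the paper. The paper applies leastness \emph{downward} twice: first to $[\mathcal{A}\Gamma\mathcal{A}]$ (a $\Gamma$-2-Id contained in $\mathcal{A}$, hence equal to it, giving $\mathcal{A}=[\mathcal{A}\Gamma\mathcal{A}\Gamma\mathcal{A}]$), then to the principal ideal $(a)=\{a\}\cup[\mathcal{T}\Gamma a]\cup[a\Gamma\mathcal{T}]\cup[\mathcal{T}\Gamma a\Gamma\mathcal{T}]$ (contained in $\mathcal{A}$, hence equal to it), and finally substitutes $(a)$ for the middle factor and absorbs the extra $\mathcal{T}$'s into $\mathcal{A}$ to get $\mathcal{A}=[\mathcal{A}\Gamma(a)\Gamma\mathcal{A}]\subseteq[\mathcal{A}\Gamma a\Gamma\mathcal{A}]$. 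You instead observe directly that $[\mathcal{A}\Gamma a\Gamma\mathcal{A}]$ is itself a $\Gamma$-2-Id of the ambient $(\mathcal{T},\Gamma)$ and apply leastness \emph{upward} once. Your version is shorter and avoids the paper's somewhat delicate substitution step; the paper's version has the mild advantage of only using minimality (every $\Gamma$-2-Id contained in $\mathcal{A}$ equals $\mathcal{A}$) rather than the kernel property ($\mathcal{A}$ contained in every $\Gamma$-2-Id). As you note, for two-sided ideals the distinction is immaterial here, and in fact your argument survives under either reading, since $[\mathcal{A}\Gamma a\Gamma\mathcal{A}]$ is a $\Gamma$-2-Id of $\mathcal{T}$ that is also contained in $\mathcal{A}$, so minimality alone already forces equality (and no zero element is in play in this theorem, so the nonzero check you flag is vacuous). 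Both proofs then conclude via Proposition~\ref{prop3.1}.
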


\begin{proof}
We know that $[\mathcal{A}\Gamma\mathcal{A}]$ is a $\Gamma$-2-Id of $(\mathcal{T},\Gamma)$ and $[\mathcal{A}\Gamma\mathcal{A}] \subseteq \mathcal{A}$.\\
Thus, $[\mathcal{A}\Gamma\mathcal{A}] = \mathcal{A}$.\\
For any $a \in \mathcal{A}$, we have:
\[
(a) = \{a\} \cup [\mathcal{T}\Gamma a] \cup [a\Gamma \mathcal{T}] \cup [\mathcal{T}\Gamma a\Gamma \mathcal{T}].
\]
So, $(a) = \mathcal{A}$.\\
Now:
\[
\mathcal{A} = [\mathcal{A}\Gamma\mathcal{A}] = [\mathcal{A}\Gamma\mathcal{A}\Gamma\mathcal{A}] = [\mathcal{A}\Gamma(a)\Gamma\mathcal{A}],
\]
where:
\[
(a) = \{a\} \cup [\mathcal{T}\Gamma a] \cup [a\Gamma \mathcal{T}] \cup [\mathcal{T}\Gamma a\Gamma \mathcal{T}].
\]
Substituting, we get:
\[
\mathcal{A} = [\mathcal{A}\Gamma\{a\} \cup [\mathcal{T}\Gamma a] \cup [a\Gamma \mathcal{T}] \cup [\mathcal{T}\Gamma a\Gamma \mathcal{T}]\Gamma\mathcal{A}] = [\mathcal{A}\Gamma(a)\Gamma\mathcal{A}].
\]
This implies:
\[
[\mathcal{A}\Gamma\mathcal{A}\Gamma\mathcal{A}] = \mathcal{A}.
\]
Therefore, $\mathcal{A}$ is a simple $\Gamma$-semigroup by \textbf{Proposition}~\ref{prop3.1}.
\end{proof}
\begin{align*}
    \end{align*}

\begin{example}
  Let $\mathcal{T}=\{a, b, c, d\}$ and $\Gamma=\{\alpha\}$ . Operation is $a\alpha b=ab$ for every $a, b \in \mathcal{T}$ and the operation table is given below.

\begin{table}[h!]
\centering
\begin{tabular}{|c|c|c|c|c|}
\hline
 & a & b & c & d \\ \hline
a     & a     & a      & d      & d      \\ \hline
b      & a      & a      & d     & d      \\ \hline
c      & d      & d      & a      & a     \\ \hline
d      & d      & d      & a     & a      \\ \hline
\end{tabular}
\caption{Operation table}
\label{tab:example}
\end{table}
Here, $\{a, d\}$ is $\Gamma$-2-Id and $\{a, d\}$ is simple.
\end{example}

\begin{lemma}
$\mathcal{C}$ is the intersection of every $\Gamma$-2-Id implies $\mathcal{C}$ is a 2-sided simple $\Gamma$-semigroup.
\end{lemma}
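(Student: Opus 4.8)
The plan is to recognize $\mathcal{C}$ as the \emph{least} $\Gamma$-2-sided ideal and then reduce the entire statement to Theorem~\ref{thm4.1}. Writing $\{\mathcal{I}_j\}_{j\in J}$ for the family of all $\Gamma$-2-sided ideals of $(\mathcal{T},\Gamma)$, we have $\mathcal{C}=\bigcap_{j\in J}\mathcal{I}_j$, so the first task is to check that this intersection is itself a $\Gamma$-2-sided ideal and that it sits below every member of the family.

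First I would verify the ideal property directly from the definition. Take any $c\in\mathcal{C}$, $t\in\mathcal{T}$, and $\gamma\in\Gamma$. Since $c\in\mathcal{I}_j$ for every $j$ and each $\mathcal{I}_j$ absorbs products from $\mathcal{T}$ on both sides, both $[t\gamma c]$ and $[c\gamma t]$ lie in every $\mathcal{I}_j$, hence in their intersection $\mathcal{C}$. This yields $[\mathcal{T}\Gamma\mathcal{C}]\subseteq\mathcal{C}$ and $[\mathcal{C}\Gamma\mathcal{T}]\subseteq\mathcal{C}$, so $\mathcal{C}$ is a $\Gamma$-2-sided ideal; in particular $[\mathcal{C}\Gamma\mathcal{C}]\subseteq[\mathcal{C}\Gamma\mathcal{T}]\subseteq\mathcal{C}$, so $\mathcal{C}$ is closed and is a genuine $\Gamma$-subsemigroup on which the assertion makes sense. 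Being the intersection of the whole family, $\mathcal{C}\subseteq\mathcal{I}_j$ for every $j$, so $\mathcal{C}$ is contained in every $\Gamma$-2-sided ideal; that is, $\mathcal{C}$ is the least $\Gamma$-2-sided ideal of $(\mathcal{T},\Gamma)$.

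With $\mathcal{C}$ identified as the least $\Gamma$-2-sided ideal, I would then invoke Theorem~\ref{thm4.1} verbatim: a least $\Gamma$-2-sided ideal is a simple $\Gamma$-semigroup. Since ``simple'' here means having no proper $\Gamma$-2-sided ideal, this is precisely the statement that $\mathcal{C}$ is a 2-sided simple $\Gamma$-semigroup, which closes the argument.

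The main obstacle is the nonemptiness and well-definedness of $\mathcal{C}$: an arbitrary intersection of ideals can in principle be empty, and both the closure computation above and the application of Theorem~\ref{thm4.1} tacitly require $\mathcal{C}\neq\phi$. The cleanest way to handle this is to read the hypothesis as asserting that a least $\Gamma$-2-sided ideal exists (equivalently, that the common intersection is nonempty); for an intrinsic guarantee one observes that in a $\Gamma$-semigroup with zero the element $0$ lies in every $\Gamma$-2-sided ideal, forcing $\mathcal{C}\neq\phi$. Once nonemptiness is secured, the remainder is the routine reduction described above.
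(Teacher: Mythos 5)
Your proposal is correct and follows essentially the same route as the paper: identify $\mathcal{C}$ as the least $\Gamma$-2-sided ideal and then apply Theorem~\ref{thm4.1}. The only cosmetic difference is that you verify the ideal property of the intersection element-wise, whereas the paper does it by showing the principal ideal $(c)$ equals $\mathcal{C}$ for each $c\in\mathcal{C}$; your explicit attention to the nonemptiness of $\mathcal{C}$ is a point the paper leaves implicit.
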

\begin{proof}
For any $c \in \mathcal{C}$, $(c)=\{c\}\cup [\mathcal{T}\Gamma c]\cup [c\Gamma \mathcal{T}]\cup [\mathcal{T}\Gamma c\Gamma \mathcal{T}]$ is a $\Gamma$ 2-Id contained in $\mathcal{C}$.\\
$(c)=\mathcal{C}$ for all $c \in \mathcal{C}$.\\
Therefore, $\mathcal{C}$ is the unique least 2-Id of $(\mathcal{T},\Gamma)$.\\
So, $\mathcal{C}$ is a 2-sided simple $\Gamma$-semigroup by \textbf{Theorem}\ref{thm4.1}.
\end{proof}
\begin{lemma}\label{lemma4.3}
Consider $\mathcal{H}$ as the 0-least $\Gamma$-LId of $(\mathcal{T},\Gamma)$ with zero and $[\mathcal{H}\Gamma \mathcal{H}] \neq 0$. Then for any non zero element $e \in \mathcal{H}$, $\mathcal{H}=[\mathcal{T}\Gamma e]$.
\end{lemma}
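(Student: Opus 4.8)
The plan is to lean entirely on the minimality encoded in the definition of a 0-least $\Gamma$-LId: the only $\Gamma$-left ideals contained in $\mathcal{H}$ are $(0)$ and $\mathcal{H}$ itself. So for a fixed non-zero $e\in\mathcal{H}$, I would first exhibit $[\mathcal{T}\Gamma e]$ as a $\Gamma$-left ideal squeezed inside $\mathcal{H}$, and then do the real work of showing it cannot collapse to $(0)$.

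First I would verify that $[\mathcal{T}\Gamma e]$ is a $\Gamma$-LId: for $s,t\in\mathcal{T}$ and $\gamma,\delta\in\Gamma$, associativity gives $[s\delta(t\gamma e)]=[(s\delta t)\gamma e]\in[\mathcal{T}\Gamma e]$, so $[\mathcal{T}\Gamma[\mathcal{T}\Gamma e]]\subseteq[\mathcal{T}\Gamma e]$. Since $e\in\mathcal{H}$ and $\mathcal{H}$ is a left ideal, $[\mathcal{T}\Gamma e]\subseteq[\mathcal{T}\Gamma\mathcal{H}]\subseteq\mathcal{H}$, and as $0\in[\mathcal{T}\Gamma e]$ this is a genuine $\Gamma$-left ideal contained in $\mathcal{H}$. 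By the 0-least hypothesis, either $[\mathcal{T}\Gamma e]=(0)$ or $[\mathcal{T}\Gamma e]=\mathcal{H}$, so the entire problem reduces to excluding the first alternative.

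To rule out $[\mathcal{T}\Gamma e]=(0)$, I would introduce the ``left annihilator inside $\mathcal{H}$'', namely $N=\{h\in\mathcal{H}:[\mathcal{T}\Gamma h]=(0)\}$. The crucial computation is that $N$ is again a $\Gamma$-LId: for $h\in N$, $t\in\mathcal{T}$, $\gamma\in\Gamma$, associativity yields $[\mathcal{T}\Gamma[t\gamma h]]=[[\mathcal{T}\Gamma t]\gamma h]\subseteq[\mathcal{T}\Gamma h]=(0)$, while $[t\gamma h]\in\mathcal{H}$ because $\mathcal{H}$ is a left ideal; hence $[t\gamma h]\in N$. Assuming $[\mathcal{T}\Gamma e]=(0)$ places the non-zero $e$ into $N$, so $N$ is a non-zero $\Gamma$-LId contained in $\mathcal{H}$, and minimality forces $N=\mathcal{H}$. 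But then $[\mathcal{T}\Gamma h]=(0)$ for every $h\in\mathcal{H}$, i.e. $[\mathcal{T}\Gamma\mathcal{H}]=(0)$, and in particular $[\mathcal{H}\Gamma\mathcal{H}]\subseteq[\mathcal{T}\Gamma\mathcal{H}]=(0)$, contradicting the hypothesis $[\mathcal{H}\Gamma\mathcal{H}]\neq 0$. Therefore $[\mathcal{T}\Gamma e]\neq(0)$, and the dichotomy above leaves only $[\mathcal{T}\Gamma e]=\mathcal{H}$, as required.

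I expect the only genuine obstacle to be the verification that $N$ is a $\Gamma$-left ideal, since everything else is a direct appeal to minimality. The point demanding care is that the hypothesis $[\mathcal{H}\Gamma\mathcal{H}]\neq 0$ is precisely what converts the conclusion $N=\mathcal{H}$ into a contradiction; without it, $\mathcal{H}$ could be a null ideal on which $[\mathcal{T}\Gamma e]=(0)$ for all $e$, and the statement would simply be false.
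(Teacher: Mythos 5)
Your proof is correct, and its skeleton coincides with the paper's: exhibit $[\mathcal{T}\Gamma e]$ as a $\Gamma$-left ideal inside $\mathcal{H}$, invoke the 0-least dichotomy, and derive $[\mathcal{H}\Gamma\mathcal{H}]=0$ from the assumption $[\mathcal{T}\Gamma e]=0$. The only real divergence is the device used to exclude that case. The paper observes that if $[\mathcal{T}\Gamma e]=0$ then the two-element set $\{0,e\}$ is itself a non-zero $\Gamma$-left ideal contained in $\mathcal{H}$ (since $[\mathcal{T}\Gamma\{0,e\}]\subseteq\{0\}$), so minimality forces $\mathcal{H}=\{0,e\}$, and then $[\mathcal{H}\Gamma\mathcal{H}]=\{0\}\cup[e\Gamma e]\subseteq[\mathcal{T}\Gamma e]=0$, the desired contradiction. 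You instead pass through the annihilator $N=\{h\in\mathcal{H}:[\mathcal{T}\Gamma h]=0\}$, verify it is a non-zero $\Gamma$-left ideal, and conclude $N=\mathcal{H}$, hence $[\mathcal{H}\Gamma\mathcal{H}]\subseteq[\mathcal{T}\Gamma\mathcal{H}]=0$. Both routes lean on the hypothesis $[\mathcal{H}\Gamma\mathcal{H}]\neq 0$ at exactly the same point; the paper's is more economical because minimality pins $\mathcal{H}$ down to a concrete two-element set, while your annihilator argument is a touch longer but more robust, since it never needs to identify $\mathcal{H}$ explicitly and the verification that $N$ is a left ideal is a routine associativity computation. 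Your closing remark that the hypothesis $[\mathcal{H}\Gamma\mathcal{H}]\neq 0$ is indispensable (otherwise $\mathcal{H}$ could be a null ideal and the claim fails) is accurate and worth keeping.
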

\begin{proof}
Let $e \neq 0 \in \mathcal{H}$.\\
Then, $[\mathcal{T}\Gamma e]$ is a $\Gamma$-LId of $(\mathcal{T},\Gamma)$ included in $\mathcal{H}$.\\
Assume $[\mathcal{T}\Gamma e]=0$.\\
Then, $ [a\Gamma a=0]$, $\{0,a\}$ is a non zero LId of $(\mathcal{T},\Gamma)$ included in $\mathcal{H}$.\\
Then, $\{0,a\}=\mathcal{H}$, $[\mathcal{H}\Gamma \mathcal{H}]=0$, which is a contradiction.\\
S0, $[\mathcal{T}\Gamma e]\neq 0$ and $[\mathcal{T}\Gamma e] = \mathcal{H}$.
\end{proof}
\begin{lemma}
$\mathcal{H}$ is a 0-least $\Gamma$-LId of a
$\Gamma$-semigroup $(\mathcal{T},\Gamma)$ including zero and $e \in \Gamma , f \in \mathcal{T}$ implies $[\mathcal{H}ef]$ is either zero or 0-least $\Gamma$-LId of $(\mathcal{T},\Gamma)$.
\end{lemma}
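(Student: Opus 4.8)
The plan is to adapt the argument of Lemma~\ref{lemma4.1} to the ``0-least'' setting, the only genuinely new feature being careful bookkeeping of the zero element. First I would dispose of the trivial alternative: if $[\mathcal{H}ef]=0$ there is nothing to prove, so the entire argument proceeds under the standing assumption $[\mathcal{H}ef]\neq 0$, and the goal becomes showing that $[\mathcal{H}ef]$ is 0-least, i.e. that the only $\Gamma$-LIds it contains are $(0)$ and $[\mathcal{H}ef]$ itself. As a preliminary I would record that $[\mathcal{H}ef]$ is indeed a $\Gamma$-LId, since by associativity $[\mathcal{T}\Gamma[\mathcal{H}ef]]=[[\mathcal{T}\Gamma\mathcal{H}]ef]\subseteq[\mathcal{H}ef]$, using $[\mathcal{T}\Gamma\mathcal{H}]\subseteq\mathcal{H}$; I would also note that $[\mathcal{H}ef]$ is exactly the image $\{[mef]:m\in\mathcal{H}\}$ of $\mathcal{H}$ under the fixed map $m\mapsto[mef]$, a fact used repeatedly below.

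The core construction mirrors Lemma~\ref{lemma4.1}. Let $\mathcal{A}$ be any \emph{nonzero} $\Gamma$-LId with $(0)\subseteq\mathcal{A}\subseteq[\mathcal{H}ef]$, and set
\[
\mathcal{M}=\{\,m\in\mathcal{H}:[mef]\in\mathcal{A}\,\}.
\]
I would then verify the three properties of $\mathcal{M}$ just as before: it is contained in $\mathcal{H}$ by definition; it satisfies $[\mathcal{M}ef]=\mathcal{A}$ because every element of $\mathcal{A}\subseteq[\mathcal{H}ef]$ has the form $[mef]$ with $m\in\mathcal{H}$, and any such $m$ lies in $\mathcal{M}$; and it is a $\Gamma$-LId, since for $n_1\in\mathcal{T}$, $n_2\in\Gamma$, $m\in\mathcal{M}$ we have $[[n_1n_2m]ef]=[n_1n_2[mef]]\in[\mathcal{T}\Gamma\mathcal{A}]\subseteq\mathcal{A}$, whence $[n_1n_2m]\in\mathcal{M}$.

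The step that is new relative to Lemma~\ref{lemma4.1}, and which I expect to be the main obstacle, is showing that $\mathcal{M}$ is a \emph{nonzero} $\Gamma$-LId, since only then does the 0-leastness of $\mathcal{H}$ apply. Here I would use that $\mathcal{A}\neq(0)$: choose a nonzero $a\in\mathcal{A}$; since $a\in[\mathcal{H}ef]$ we may write $a=[mef]$ for some $m\in\mathcal{H}$, and this $m$ must be nonzero, for if $m=0$ then $a=[0ef]=0$, contradicting the choice of $a$. Thus $m$ is a nonzero element of $\mathcal{M}$, so $\mathcal{M}$ is a nonzero $\Gamma$-LId contained in $\mathcal{H}$. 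By the hypothesis that $\mathcal{H}$ is 0-least we conclude $\mathcal{M}=\mathcal{H}$, and therefore $\mathcal{A}=[\mathcal{M}ef]=[\mathcal{H}ef]$. Since every nonzero $\Gamma$-LId contained in $[\mathcal{H}ef]$ coincides with $[\mathcal{H}ef]$, the only $\Gamma$-LIds inside it are $(0)$ and $[\mathcal{H}ef]$, so $[\mathcal{H}ef]$ is 0-least. Combined with the trivial case this gives the dichotomy in the statement. The one point demanding care throughout is handling the degenerate possibility that an element written as $[mef]$ collapses to $0$, which is precisely why the nonzero witness $a$ is needed to rule out $\mathcal{M}=\{0\}$.
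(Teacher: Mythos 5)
Your proposal is correct and follows essentially the same route as the paper: both form the preimage $\mathcal{M}=\{m\in\mathcal{H}:[mef]\in\mathcal{A}\}$, verify it is a $\Gamma$-LId contained in $\mathcal{H}$ with $[\mathcal{M}ef]=\mathcal{A}$, and invoke the 0-leastness of $\mathcal{H}$ to force $\mathcal{A}$ to be $(0)$ or $[\mathcal{H}ef]$. Your extra step checking that $\mathcal{M}$ is nonzero when $\mathcal{A}$ is nonzero is a sound (and slightly more careful) way of extracting the same dichotomy the paper obtains directly from $\mathcal{M}=0$ or $\mathcal{M}=\mathcal{H}$.
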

\begin{proof}
Let $[\mathcal{H}ef] \neq 0$.\\
So, $[\mathcal{H}ef]$ is a  $\Gamma$-LId of $(\mathcal{T},\Gamma)$.\\
Consider $\mathcal{N}$ as the $\Gamma$-LId of $(\mathcal{T},\Gamma)$ included in $[\mathcal{H}ef]$.\\
Take $\mathcal{M}=\{m \in \mathcal{H}: [mef]\in \mathcal{N}\}$.\\
Then, $[\mathcal{M}ef]=\mathcal{N}$.\\
By \textbf{lemma}\ref{lemma4.1}, we can show that $\mathcal{M}$ is a $\Gamma$-LId of a
$\Gamma$-semigroup $(\mathcal{T},\Gamma)$.\\
So, $\mathcal{M}$ is zero or $\mathcal{M}$ is $\mathcal{H}$.\\
Then, $\mathcal{N}=0$ or $\mathcal{N}=[\mathcal{H}ef]$.\\
Therefore, $ [\mathcal{H}ef]$ is a 0-least $\Gamma$-LId of $(\mathcal{T},\Gamma)$.
\end{proof}
\begin{theorem}\label{thm4.2}
$\mathcal{H}$ is a 0-least $\Gamma$-2-Id of $\Gamma$-semigroup $(\mathcal{T},\Gamma)$ including zero implies either $[\mathcal{H}\Gamma \mathcal{H}]=0$ or $\mathcal{H}$ is simple $\Gamma$-semigroup.
\end{theorem}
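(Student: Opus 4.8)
The plan is to run the classical 0-minimal ideal dichotomy inside the $\Gamma$-setting, splitting on the product $[\mathcal{H}\Gamma\mathcal{H}]$. First I would record that $[\mathcal{H}\Gamma\mathcal{H}]$ is itself a $\Gamma$-2-Id of $(\mathcal{T},\Gamma)$ contained in $\mathcal{H}$: since $\mathcal{H}$ absorbs products with $\mathcal{T}$ on both sides, $[\mathcal{T}\Gamma[\mathcal{H}\Gamma\mathcal{H}]]=[[\mathcal{T}\Gamma\mathcal{H}]\Gamma\mathcal{H}]\subseteq[\mathcal{H}\Gamma\mathcal{H}]$ and $[[\mathcal{H}\Gamma\mathcal{H}]\Gamma\mathcal{T}]=[\mathcal{H}\Gamma[\mathcal{H}\Gamma\mathcal{T}]]\subseteq[\mathcal{H}\Gamma\mathcal{H}]$, while $[\mathcal{H}\Gamma\mathcal{H}]\subseteq[\mathcal{H}\Gamma\mathcal{T}]\subseteq\mathcal{H}$. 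Because $\mathcal{H}$ is a 0-least $\Gamma$-2-Id, the only $\Gamma$-2-Ids it contains are $(0)$ and $\mathcal{H}$ itself, so either $[\mathcal{H}\Gamma\mathcal{H}]=0$, which is the first alternative and finishes the proof, or $[\mathcal{H}\Gamma\mathcal{H}]=\mathcal{H}$.

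It then remains to treat the case $[\mathcal{H}\Gamma\mathcal{H}]=\mathcal{H}$ and conclude that $\mathcal{H}$ is simple (more precisely 0-simple, since $\mathcal{H}$ contains the adjoined zero). I would verify the criterion of \textbf{Theorem}~\ref{thm3.1} inside $\mathcal{H}$, namely that $[\mathcal{H}\Gamma e\Gamma\mathcal{H}]=\mathcal{H}$ for every nonzero $e\in\mathcal{H}$. Fix such an $e$ and form the principal $\Gamma$-2-Id of $(\mathcal{T},\Gamma)$ generated by it, $(e)=\{e\}\cup[\mathcal{T}\Gamma e]\cup[e\Gamma\mathcal{T}]\cup[\mathcal{T}\Gamma e\Gamma\mathcal{T}]$. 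As $e\in\mathcal{H}$ and $\mathcal{H}$ is a $\Gamma$-2-Id, every summand lies in $\mathcal{H}$, so $(e)\subseteq\mathcal{H}$; and $(e)\neq 0$ since $e\neq 0$. By 0-leastness this forces $(e)=\mathcal{H}$.

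Now I would exploit the idempotence obtained above: from $[\mathcal{H}\Gamma\mathcal{H}]=\mathcal{H}$ and associativity, $[\mathcal{H}\Gamma\mathcal{H}\Gamma\mathcal{H}]=[[\mathcal{H}\Gamma\mathcal{H}]\Gamma\mathcal{H}]=\mathcal{H}$, and substituting $\mathcal{H}=(e)$ in the middle factor gives $\mathcal{H}=[\mathcal{H}\Gamma(e)\Gamma\mathcal{H}]$. The decisive computation is that each of the four pieces of $(e)$ collapses into $[\mathcal{H}\Gamma e\Gamma\mathcal{H}]$: the extra outer $\mathcal{T}$-factors appearing in $[\mathcal{T}\Gamma e]$, $[e\Gamma\mathcal{T}]$ and $[\mathcal{T}\Gamma e\Gamma\mathcal{T}]$ are absorbed by the adjacent copies of $\mathcal{H}$ via $[\mathcal{H}\Gamma\mathcal{T}]\subseteq\mathcal{H}$ and $[\mathcal{T}\Gamma\mathcal{H}]\subseteq\mathcal{H}$, whence $[\mathcal{H}\Gamma(e)\Gamma\mathcal{H}]\subseteq[\mathcal{H}\Gamma e\Gamma\mathcal{H}]$. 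Combining with the trivial inclusion $[\mathcal{H}\Gamma e\Gamma\mathcal{H}]\subseteq\mathcal{H}$ yields $\mathcal{H}\subseteq[\mathcal{H}\Gamma e\Gamma\mathcal{H}]\subseteq\mathcal{H}$, so equality holds, and \textbf{Theorem}~\ref{thm3.1} gives that $\mathcal{H}$ is a (0-)simple $\Gamma$-semigroup.

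I expect the absorption step — showing that the generated ideal $(e)$ "shrinks back" to $e$ once it is sandwiched between two copies of $\mathcal{H}$ — to be the main technical obstacle, since this is exactly where the two-sided ideal property of $\mathcal{H}$ must be applied to all four generating pieces and where associativity of the $\Gamma$-multiplication is used repeatedly. A secondary point worth flagging is terminological: because $\mathcal{H}$ contains the zero, $\{0\}$ is always a proper $\Gamma$-2-Id of $\mathcal{H}$, so the honest conclusion in the second case is 0-simplicity in the sense of \textbf{Theorem}~\ref{thm3.1} rather than simplicity in the sense of \textbf{Proposition}~\ref{prop3.1}; I would therefore route the final argument through the 0-simple criterion.
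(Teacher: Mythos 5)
Your proposal is correct and follows essentially the same route as the paper: the dichotomy on the ideal $[\mathcal{H}\Gamma\mathcal{H}]$, then in the nonzero case writing $\mathcal{H}=[\mathcal{H}\Gamma\mathcal{H}\Gamma\mathcal{H}]$ and verifying the criterion of \textbf{Theorem}~\ref{thm3.1}. The paper compresses the key step ($[\mathcal{H}\Gamma e\Gamma\mathcal{H}]=\mathcal{H}$ for nonzero $e$) into a one-line appeal to that theorem, whereas you supply the missing substitution $(e)=\mathcal{H}$ and the absorption computation (mirroring the paper's own proof of \textbf{Theorem}~\ref{thm4.1}); your terminological remark that the honest conclusion is 0-simplicity also matches the paper's final sentence.
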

\begin{proof}
$[\mathcal{H}\Gamma \mathcal{H}]$ is a $\Gamma$-2-sided ideal of $\Gamma$-semigroup $(\mathcal{T},\Gamma)$ and $[\mathcal{H}\Gamma \mathcal{H}] \subseteq \mathcal{H}$.\\
So, either $[\mathcal{H}\Gamma \mathcal{H}]=0$ or $[\mathcal{H}\Gamma \mathcal{H}]=\mathcal{H}$.\\
Let $[\mathcal{H}\Gamma \mathcal{H}]\neq 0$.\\
So, $\mathcal{H}=[\mathcal{H}\Gamma \mathcal{H}]=[\mathcal{H}\Gamma \mathcal{H}\Gamma \mathcal{H}]$.\\
From \textbf{Theorem}\ref{thm3.1}, we can show that for all non zero $e \in \mathcal{H}$, $[\mathcal{H}
\Gamma e \Gamma \mathcal{H}]$.\\
Thus, $\mathcal{H}$ is 0-2-sided simple $\Gamma$-semigroup.
\end{proof}
\begin{theorem}\label{thm4.3}
$\mathcal{H}$ be a 0-least $\Gamma$-2-Id of a $\Gamma$-semigroup $(\mathcal{T},\Gamma)$ contains atleast one 0-least $\Gamma$-LId of $(\mathcal{T},\Gamma)$ implies $\mathcal{H}$ is the union of every 0-least $\Gamma$-LIds of $(\mathcal{T},\Gamma)$ included in $\mathcal{H}$.
\end{theorem}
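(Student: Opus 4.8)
The plan is to exhibit the union $\mathcal{K}$ of all 0-least $\Gamma$-LIds contained in $\mathcal{H}$ as a $\Gamma$-2-sided ideal and then invoke the 0-minimality of $\mathcal{H}$. Set $\mathcal{K} = \bigcup \{\mathcal{L} : \mathcal{L} \text{ a 0-least } \Gamma\text{-LId of } (\mathcal{T},\Gamma),\ \mathcal{L}\subseteq \mathcal{H}\}$. By hypothesis this family is nonempty, so $\mathcal{K}$ is a nonzero subset of $\mathcal{H}$; moreover $0 \in \mathcal{K}$ since each 0-least $\Gamma$-LId contains $(0)$. The whole argument reduces to proving that $\mathcal{K}$ is a $\Gamma$-2-Id: once that is done, $\mathcal{K}$ is a nonzero $\Gamma$-2-Id contained in the 0-least $\Gamma$-2-Id $\mathcal{H}$, so 0-minimality forces $\mathcal{K} = \mathcal{H}$, which is exactly the claim.

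First I would check that $\mathcal{K}$ is a $\Gamma$-LId, which is immediate: for any family $\{\mathcal{L}_i\}$ of $\Gamma$-LIds one has $[\mathcal{T}\Gamma \bigcup_i \mathcal{L}_i] = \bigcup_i [\mathcal{T}\Gamma \mathcal{L}_i] \subseteq \bigcup_i \mathcal{L}_i$, so a union of $\Gamma$-LIds is again a $\Gamma$-LId. Hence $[\mathcal{T}\Gamma\mathcal{K}] \subseteq \mathcal{K}$ with no further work.

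The substantive step, and the one I expect to be the main obstacle, is showing $\mathcal{K}$ is a $\Gamma$-RId, i.e. $[\mathcal{K}\Gamma\mathcal{T}] \subseteq \mathcal{K}$. Take $k \in \mathcal{K}$, $\gamma \in \Gamma$, $t \in \mathcal{T}$; then $k \in \mathcal{L}$ for some 0-least $\Gamma$-LId $\mathcal{L} \subseteq \mathcal{H}$, and $[k\gamma t] \in [\mathcal{L}\gamma t]$. By the lemma asserting that $[\mathcal{L}\gamma t]$ is either zero or a 0-least $\Gamma$-LId, there are two cases. If $[\mathcal{L}\gamma t] = 0$ then $[k\gamma t] = 0 \in \mathcal{K}$. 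Otherwise $[\mathcal{L}\gamma t]$ is a 0-least $\Gamma$-LId, and since $\mathcal{H}$ is a $\Gamma$-2-Id with $\mathcal{L}\subseteq \mathcal{H}$ we get $[\mathcal{L}\gamma t]\subseteq [\mathcal{H}\Gamma\mathcal{T}]\subseteq \mathcal{H}$; thus $[\mathcal{L}\gamma t]$ is a 0-least $\Gamma$-LId contained in $\mathcal{H}$, hence one of the sets defining $\mathcal{K}$, so $[\mathcal{L}\gamma t]\subseteq \mathcal{K}$ and in particular $[k\gamma t]\in \mathcal{K}$. In either case $[k\gamma t]\in \mathcal{K}$, giving $[\mathcal{K}\Gamma\mathcal{T}]\subseteq \mathcal{K}$.

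Combining the two inclusions, $\mathcal{K}$ is a nonzero $\Gamma$-2-Id contained in $\mathcal{H}$, and 0-minimality of $\mathcal{H}$ yields $\mathcal{K} = \mathcal{H}$. The delicate point to watch is the use of the translation lemma: it must be applied to each constituent $\mathcal{L}$ individually, not to $\mathcal{K}$ itself (which need not be 0-least), and one must confirm that the translate $[\mathcal{L}\gamma t]$ remains inside $\mathcal{H}$ — this is precisely where the two-sidedness of $\mathcal{H}$ is indispensable, since it guarantees every new 0-least $\Gamma$-LId produced by right translation is again absorbed into the union $\mathcal{K}$.
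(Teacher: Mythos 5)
Your proposal is correct and follows essentially the same route as the paper: form the union of all 0-least $\Gamma$-LIds inside $\mathcal{H}$, note it is automatically a $\Gamma$-LId, use the translation lemma ($[\mathcal{L}\gamma t]$ is zero or again a 0-least $\Gamma$-LId, and stays inside $\mathcal{H}$ by two-sidedness) to get right absorption, and conclude by 0-minimality of $\mathcal{H}$. Your write-up is in fact cleaner than the paper's, which conflates its notation for the union partway through.
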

\begin{proof}
Consider $\mathcal{M}$ represent the union of every 0-least $\Gamma$-LIds of $(\mathcal{T},\Gamma)$ that are included within $\mathcal{H}$.\\
$\mathcal{M}$ is a LId of $(\mathcal{T},\Gamma)$.\\
It demonstrates that $\mathcal{M}$ is a $\Gamma$-RId. Consider $h \in \mathcal{M}$, let $i \in \mathcal{T}$ and $j \in \Gamma$.\\
From the definition, $h \in \mathcal{M}$ for some 0-least $\Gamma$-LId $\mathcal{N}$ of $(\mathcal{T},\Gamma)$ included in$\mathcal{H}$. \\
From \textbf{lemma}\ref{lemma4.1}, $[\mathcal{N}ij] = 0$ or $[\mathcal{N}ij]$ is a 0-least $\Gamma$-LId of $(\mathcal{T},\Gamma)$.\\
Also, $[\mathcal{N}ij] \subseteq [\mathcal{H}ij] \subseteq \mathcal{H}$ and hence $[\mathcal{N}ij] \subseteq \mathcal{M}$.\\
So, for all $h \in \mathcal{L}$, $[\mathcal{L}ij] \subseteq \mathcal{L}$.\\
Hence, $\mathcal{L} \neq 0$ since it includes minimum 1 0-least $\Gamma$-LId of $(\mathcal{T},\Gamma)$.\\
So, $\mathcal{L} \neq \phi$ is a $\Gamma$-2-Id of $(\mathcal{T},\Gamma)$ included in $\mathcal{H}$.\\
Hence, $\mathcal{H}=\mathcal{M}$.
\end{proof}
\begin{lemma}
If $\mathcal{H}$ be a 0-least $\Gamma$-2-sided ideal of
 $\Gamma$-semigroup $(\mathcal{T},\Gamma)$ includes zero and $[\mathcal{H}\Gamma \mathcal{H}] \neq 0$. Then, for any non-zero $\Gamma$-LId $\mathcal{M}$ of $(\mathcal{T},\Gamma)$ included in $\mathcal{H}$, $[\mathcal{M}\Gamma \mathcal{M}] \neq 0$.
 \end{lemma}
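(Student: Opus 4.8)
The plan is to argue by contradiction: assume $[\mathcal{M}\Gamma\mathcal{M}]=0$ and deduce that $[\mathcal{H}\Gamma\mathcal{H}]=0$, contradicting the hypothesis $[\mathcal{H}\Gamma\mathcal{H}]\neq 0$. The whole argument is driven by the $0$-leastness of $\mathcal{H}$, which pins down $\mathcal{H}$ as the $\Gamma$-$2$-sided ideal generated by $\mathcal{M}$, together with a short computation of $[\mathcal{H}\Gamma\mathcal{H}]$ in terms of $\mathcal{M}$.

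First I would analyze the $\Gamma$-$2$-sided ideal $(\mathcal{M})$ generated by $\mathcal{M}$ in $(\mathcal{T},\Gamma)$. By the generating formula, $(\mathcal{M})=\mathcal{M}\cup[\mathcal{T}\Gamma\mathcal{M}]\cup[\mathcal{M}\Gamma\mathcal{T}]\cup[\mathcal{T}\Gamma\mathcal{M}\Gamma\mathcal{T}]$. Since $\mathcal{M}$ is already a $\Gamma$-LId we have $[\mathcal{T}\Gamma\mathcal{M}]\subseteq\mathcal{M}$, which also forces $[\mathcal{T}\Gamma\mathcal{M}\Gamma\mathcal{T}]=[[\mathcal{T}\Gamma\mathcal{M}]\Gamma\mathcal{T}]\subseteq[\mathcal{M}\Gamma\mathcal{T}]$. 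Hence the generated ideal collapses to $(\mathcal{M})=\mathcal{M}\cup[\mathcal{M}\Gamma\mathcal{T}]$. Because $\mathcal{H}$ is itself a $\Gamma$-$2$-Id containing $\mathcal{M}$, both $\mathcal{M}$ and $[\mathcal{M}\Gamma\mathcal{T}]$ lie inside $\mathcal{H}$, so $(\mathcal{M})\subseteq\mathcal{H}$; and $(\mathcal{M})\neq 0$ since $\mathcal{M}\neq 0$. As $\mathcal{H}$ is a $0$-least $\Gamma$-$2$-Id, the only nonzero $\Gamma$-$2$-Id contained in it is $\mathcal{H}$ itself, so $\mathcal{H}=(\mathcal{M})=\mathcal{M}\cup[\mathcal{M}\Gamma\mathcal{T}]$.

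Writing $\mathcal{A}=[\mathcal{M}\Gamma\mathcal{T}]$, I would then expand $[\mathcal{H}\Gamma\mathcal{H}]=[(\mathcal{M}\cup\mathcal{A})\Gamma(\mathcal{M}\cup\mathcal{A})]$ into its four pieces and show each vanishes under the assumption $[\mathcal{M}\Gamma\mathcal{M}]=0$. Indeed $[\mathcal{M}\Gamma\mathcal{M}]=0$ outright; $[\mathcal{M}\Gamma\mathcal{A}]=[[\mathcal{M}\Gamma\mathcal{M}]\Gamma\mathcal{T}]=0$; $[\mathcal{A}\Gamma\mathcal{M}]=[\mathcal{M}\Gamma[\mathcal{T}\Gamma\mathcal{M}]]\subseteq[\mathcal{M}\Gamma\mathcal{M}]=0$ using $[\mathcal{T}\Gamma\mathcal{M}]\subseteq\mathcal{M}$; and $[\mathcal{A}\Gamma\mathcal{A}]=[\mathcal{M}\Gamma[\mathcal{T}\Gamma\mathcal{M}]\Gamma\mathcal{T}]\subseteq[[\mathcal{M}\Gamma\mathcal{M}]\Gamma\mathcal{T}]=0$. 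Each reduction rests on the associativity axiom $(e\alpha f)\beta g=e\alpha(f\beta g)$ and on $\mathcal{M}$ being a $\Gamma$-LId, which lets me reassociate a factor of $\mathcal{T}$ next to $\mathcal{M}$ and absorb it into $\mathcal{M}$. Combining the four pieces gives $[\mathcal{H}\Gamma\mathcal{H}]=0$, the desired contradiction, so $[\mathcal{M}\Gamma\mathcal{M}]\neq 0$.

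The main obstacle I anticipate is purely bookkeeping rather than conceptual: one must justify each reassociation carefully when elements of $\Gamma$ are interleaved among the sets, and must be precise that the $0$-leastness hypothesis is being used in the form ``every nonzero $\Gamma$-$2$-Id contained in $\mathcal{H}$ equals $\mathcal{H}$.'' Care is also needed to verify that $(\mathcal{M})$ really does simplify to $\mathcal{M}\cup[\mathcal{M}\Gamma\mathcal{T}]$, since it is exactly this simplification that makes the final expansion of $[\mathcal{H}\Gamma\mathcal{H}]$ manageable. Notably, this route does not require invoking Theorem~\ref{thm4.2} or Theorem~\ref{thm3.1}; the elementary ideal manipulations suffice.
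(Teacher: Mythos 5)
Your proof is correct, and it takes a recognizably different (though closely related) route from the paper's. The paper argues forward: it observes that $[\mathcal{M}\Gamma\mathcal{T}]$ is a $\Gamma$-2-sided ideal contained in $\mathcal{H}$, rules out $[\mathcal{M}\Gamma\mathcal{T}]=0$ (since that would force $\mathcal{M}=\mathcal{H}$ and hence $[\mathcal{H}\Gamma\mathcal{H}]=0$), concludes $[\mathcal{M}\Gamma\mathcal{T}]=\mathcal{H}$, and then computes
\[
\mathcal{H}=[\mathcal{H}\Gamma\mathcal{H}]=[\mathcal{M}\Gamma\mathcal{T}\Gamma\mathcal{M}\Gamma\mathcal{T}]\subseteq\bigl[[\mathcal{M}\Gamma\mathcal{M}]\Gamma\mathcal{T}\bigr],
\]
so that $[\mathcal{M}\Gamma\mathcal{M}]\neq 0$. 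You instead identify $\mathcal{H}$ with the generated ideal $(\mathcal{M})=\mathcal{M}\cup[\mathcal{M}\Gamma\mathcal{T}]$ via $0$-leastness and then run a contradiction by expanding $[\mathcal{H}\Gamma\mathcal{H}]$ into four products, each of which vanishes when $[\mathcal{M}\Gamma\mathcal{M}]=0$. The engine is the same in both arguments, namely the absorption $[\mathcal{T}\Gamma\mathcal{M}]\subseteq\mathcal{M}$ applied inside $[\mathcal{M}\Gamma\mathcal{T}\Gamma\mathcal{M}\Gamma\mathcal{T}]$; your $[\mathcal{A}\Gamma\mathcal{A}]$ computation is literally the paper's key inclusion. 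What your version buys is that you never need the identity $[\mathcal{H}\Gamma\mathcal{H}]=\mathcal{H}$ (which the paper uses implicitly and which itself requires a $0$-leastness argument), and you avoid the dichotomy on $[\mathcal{M}\Gamma\mathcal{T}]$ altogether; the cost is the extra verification that $\mathcal{M}\cup[\mathcal{M}\Gamma\mathcal{T}]$ really is a $\Gamma$-2-sided ideal, which you correctly flag and which does check out. Both proofs are sound.
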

 \begin{proof}
 Here, either $[\mathcal{M}\Gamma \mathcal{T}]=\mathcal{M}$ or $[\mathcal{M}\Gamma \mathcal{T}]=0$ as $[\mathcal{M}\Gamma \mathcal{T}]$ is a $\Gamma$-2-Id of $(\mathcal{T},\Gamma)$ included in $\mathcal{H}$.\\
 Suppose $[\mathcal{M}\Gamma \mathcal{T}]=0$.\\
 So, $\mathcal{M}$ is a $\Gamma$-2-Id and $\mathcal{M}=\mathcal{H}$.\\
 So, $[\mathcal{H}\Gamma \mathcal{H}]=[\mathcal{M}\Gamma \mathcal{H}] \subseteq [\mathcal{M}\Gamma \mathcal{T}]=0$, which is a contradiction.\\
 Hence, $[\mathcal{M}\Gamma \mathcal{H}]=\mathcal{H}$.
\\
So, $\mathcal{H}=[\mathcal{H}\Gamma \mathcal{H}]=[\mathcal{M}\Gamma \mathcal{T}\Gamma \mathcal{M}\Gamma \mathcal{T}]\subseteq [\mathcal{M}\Gamma \mathcal{M}]\Gamma \mathcal{T}$.\\
Therefore, $ [\mathcal{M}\Gamma \mathcal{M}]\neq 0$.
\end{proof}
\begin{theorem}
If $\mathcal{H}$ be a 0-least $\Gamma$-2-sided ideal of
 $\Gamma$-semigroup $(\mathcal{T},\Gamma)$ includes zero and $[\mathcal{H}\Gamma \mathcal{H}] \neq 0$ and suppose that $\mathcal{H}$ contains atleast one 0-least $\Gamma$-LId of $(\mathcal{T},\Gamma)$. Then, each $\Gamma$-LId of $\mathcal{H}$ is a $\Gamma$-LId of $(\mathcal{T},\Gamma)$.
 \end{theorem}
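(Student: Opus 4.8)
The plan is to argue elementwise. To show that a $\Gamma$-LId $\mathcal{L}$ of $\mathcal{H}$ is a $\Gamma$-LId of $(\mathcal{T},\Gamma)$, I would fix a nonzero $\ell \in \mathcal{L}$ (the element $0$ is handled at once, since $[\mathcal{T}\Gamma 0]=0\in\mathcal{L}$) and prove $[\mathcal{T}\Gamma \ell]\subseteq \mathcal{L}$; taking the union over all $\ell\in\mathcal{L}$ then gives $[\mathcal{T}\Gamma \mathcal{L}]\subseteq \mathcal{L}$, which is exactly the assertion. The whole proof is about identifying the right principal left ideal attached to $\ell$.

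First I would assemble the structural facts that the hypotheses make available. Since $[\mathcal{H}\Gamma \mathcal{H}]\neq 0$, Theorem~\ref{thm4.2} tells us $\mathcal{H}$ is a $0$-$2$-sided simple $\Gamma$-semigroup, so applying Theorem~\ref{thm3.1} \emph{inside} $\mathcal{H}$ yields $[\mathcal{H}\Gamma \ell\Gamma \mathcal{H}]=\mathcal{H}$ for every nonzero $\ell\in\mathcal{H}$. Because $\mathcal{H}$ moreover contains at least one $0$-least $\Gamma$-LId, Theorem~\ref{thm4.3} expresses $\mathcal{H}$ as the union of all $0$-least $\Gamma$-LIds of $(\mathcal{T},\Gamma)$ contained in $\mathcal{H}$; hence the chosen $\ell$ lies in some $0$-least $\Gamma$-LId $\mathcal{K}\subseteq \mathcal{H}$. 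This sandwich element $\mathcal{K}$ is the heart of the argument.

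The core step compares $[\mathcal{H}\Gamma \ell]\subseteq[\mathcal{T}\Gamma \ell]$ against $\mathcal{K}$. I would check that $[\mathcal{H}\Gamma \ell]$ is a $\Gamma$-LId of $(\mathcal{T},\Gamma)$ via $[\mathcal{T}\Gamma [\mathcal{H}\Gamma \ell]]=[[\mathcal{T}\Gamma \mathcal{H}]\Gamma \ell]\subseteq[\mathcal{H}\Gamma \ell]$, and that it lies in $\mathcal{K}$ because $\ell\in\mathcal{K}$ and $\mathcal{K}$ is a left ideal. It is nonzero: if $[\mathcal{H}\Gamma \ell]=0$ then $[\mathcal{H}\Gamma \ell\Gamma \mathcal{H}]=[[\mathcal{H}\Gamma \ell]\Gamma \mathcal{H}]=0$, contradicting $[\mathcal{H}\Gamma \ell\Gamma \mathcal{H}]=\mathcal{H}\neq 0$. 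By $0$-leastness of $\mathcal{K}$, any nonzero left ideal contained in $\mathcal{K}$ equals $\mathcal{K}$, so $[\mathcal{H}\Gamma \ell]=\mathcal{K}$. The same minimality forces $[\mathcal{T}\Gamma \ell]=\mathcal{K}$, since $[\mathcal{T}\Gamma \ell]$ is a left ideal squeezed as $\mathcal{K}=[\mathcal{H}\Gamma \ell]\subseteq[\mathcal{T}\Gamma \ell]\subseteq \mathcal{K}$.

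Finally I would use the hypothesis that $\mathcal{L}$ is a $\Gamma$-LId of $\mathcal{H}$: from $\ell\in\mathcal{L}$ we get $[\mathcal{H}\Gamma \ell]\subseteq[\mathcal{H}\Gamma \mathcal{L}]\subseteq \mathcal{L}$, whence $[\mathcal{T}\Gamma \ell]=[\mathcal{H}\Gamma \ell]\subseteq \mathcal{L}$, completing the step. I expect the main obstacle to be precisely the identity $[\mathcal{T}\Gamma \ell]=[\mathcal{H}\Gamma \ell]$: a direct substitution using $0$-simplicity writes $\ell=[c\beta \ell\delta d]$ with factors on \emph{both} sides, and the trailing factor $d$ cannot be absorbed by the one-sided ideal $\mathcal{L}$. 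Routing the comparison through the sandwiched $0$-least left ideal $\mathcal{K}$, so that all three left ideals collapse to $\mathcal{K}$, is what circumvents this difficulty, and it is exactly here that the assumption of an existing $0$-least $\Gamma$-LId is consumed.
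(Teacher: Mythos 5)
Your proposal is correct and follows essentially the same route as the paper: both invoke Theorem~\ref{thm4.2} to get $0$-simplicity of $\mathcal{H}$, locate the chosen element in a $0$-least $\Gamma$-LId $\mathcal{K}\subseteq\mathcal{H}$ via Theorem~\ref{thm4.3}, and use the collapse $[\mathcal{H}\Gamma \ell]=\mathcal{K}$ to conclude. The only cosmetic difference is that the paper packages the conclusion as $\mathcal{M}=\bigcup\{[\mathcal{H}\Gamma a]:a\in\mathcal{M}\}$, a union of $\Gamma$-LIds of $(\mathcal{T},\Gamma)$, whereas you verify $[\mathcal{T}\Gamma \ell]\subseteq\mathcal{L}$ elementwise; your write-up is, if anything, the more carefully justified of the two.
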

 \begin{proof}
 Consider $\mathcal{M}$ as the non zero $\Gamma$-LId of $\mathcal{M}$, a non zero element $a \in \mathcal{M}$.\\
 Then, $\mathcal{H}$ is a 0-2-sided simple by \textbf{Theorem}\ref{thm4.2}.\\
 $\mathcal{H}=[\mathcal{H}\Gamma a \Gamma\mathcal{H}]$.\\
 Hence, $[\mathcal{H}\Gamma a] \neq 0$.\\
 There is a 0-least $\Gamma$-LId $\mathcal{H}_{1}$ of $(\mathcal{T},\Gamma)$ such that $a \in \mathcal{H}_{1} \subseteq \mathcal{H}$ by \textbf{Theorem}\ref{thm4.2}.\\
 Since, $[\mathcal{H}\Gamma a]$ is a non zero $\Gamma$-LId of $(\mathcal{T},\Gamma)$ included in $\mathcal{H}_{1}$.\\
 Then, $[\mathcal{H}\Gamma a]=\mathcal{H}_{1}$.\\
 Therefore, $a \in [\mathcal{H}\Gamma a]$.\\
 Thus, $\mathcal{M}=\bigcup \{[\mathcal{H}\mathcal{H}a]:a \in \mathcal{M}\}$ becomes $\Gamma$-LId of $(\mathcal{T},\Gamma)$.
 \end{proof}
 Similar way we can prove the results for $\Gamma$-RIds.
 \section{Completely 0-Simple \texorpdfstring{$\Gamma$}{Gamma}-Semigroup}
 \begin{definition}
 Consider e,f as the idempotents in a $\Gamma$-Semigroup $(\mathcal{T},\Gamma)$.Then, a partial order $\leq$ can be defined as \\
 $e \leq f$ if there exist $x,y \in \Gamma$, $[exf]=[fye]=e$.\\
 An idempotent is said to be primitive if it is nonzero and is minimal in the set of non-zero idempotents(With the above partial order).\\
 A 0-simple $\Gamma$-semigroup is called complete when it contains a primitive idempotent.
 \end{definition}
 \begin{lemma}\label{lemma5.1}
Let $\mathcal{H}$ be 0-least $\Gamma$-LId of $\Gamma$-semigroup $(\mathcal{T},\Gamma)$ implies $\mathcal{H}\setminus \{0\}$ is a $\mathcal{L}$ class of $(\mathcal{T},\Gamma)$.
\end{lemma}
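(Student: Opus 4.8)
The plan is to identify $\mathcal{H}\setminus\{0\}$ with the $\mathcal{L}$-class of any of its nonzero members, and the engine driving this is the 0-minimality of $\mathcal{H}$, which forces every nonzero principal $\Gamma$-left ideal sitting inside $\mathcal{H}$ to fill up all of $\mathcal{H}$. Throughout I write $L(e)=\{e\}\cup[\mathcal{T}\Gamma e]$ for the principal $\Gamma$-left ideal generated by $e$, which is a $\Gamma$-left ideal by the preliminaries, and I recall that $e\,\mathcal{L}\,f$ means exactly $L(e)=L(f)$.

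First I would fix a nonzero $e\in\mathcal{H}$ and examine $L(e)$. Since $\mathcal{H}$ is a $\Gamma$-LId and $e\in\mathcal{H}$, both $\{e\}$ and $[\mathcal{T}\Gamma e]\subseteq[\mathcal{T}\Gamma\mathcal{H}]\subseteq\mathcal{H}$ lie in $\mathcal{H}$, so $L(e)\subseteq\mathcal{H}$. Because $e\neq 0$, the ideal $L(e)$ is nonzero; as $\mathcal{H}$ is 0-least, the only $\Gamma$-left ideals contained in $\mathcal{H}$ are $(0)$ and $\mathcal{H}$ itself, and therefore $L(e)=\mathcal{H}$. Thus every nonzero element of $\mathcal{H}$ generates the whole of $\mathcal{H}$ as a principal $\Gamma$-left ideal. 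Consequently, for any two nonzero $e,f\in\mathcal{H}$ we get $L(e)=\mathcal{H}=L(f)$, which is precisely $e\,\mathcal{L}\,f$, so $\mathcal{H}\setminus\{0\}$ is contained in a single $\mathcal{L}$-class.

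Next I would establish the reverse inclusion, namely that nothing outside $\mathcal{H}\setminus\{0\}$ is $\mathcal{L}$-related to a fixed nonzero $e\in\mathcal{H}$. If $f\,\mathcal{L}\,e$ then $L(f)=L(e)=\mathcal{H}$, and since $f\in L(f)$ we obtain $f\in\mathcal{H}$; moreover $f\neq 0$, because $L(0)=\{0\}\cup[\mathcal{T}\Gamma 0]=\{0\}\neq\mathcal{H}$ (as $\mathcal{H}$ is nonzero). Hence $f\in\mathcal{H}\setminus\{0\}$, and combining the two inclusions shows that the $\mathcal{L}$-class of $e$ equals $\mathcal{H}\setminus\{0\}$, which is what we wanted.

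The step most in need of care — and the one I would flag as the genuine obstacle — is the bookkeeping of the zero element together with the precise reading of ``0-least''. One must use 0-minimality in the form ``$(0)$ and $\mathcal{H}$ are the only $\Gamma$-left ideals inside $\mathcal{H}$'' (rather than literally ``$(0)$ is the only one''), and one must separately rule out $f=0$ in the reverse inclusion, since $0$ forms its own degenerate $\mathcal{L}$-class. Everything else reduces to the routine verification that $\{e\}\cup[\mathcal{T}\Gamma e]$ is a $\Gamma$-left ideal contained in $\mathcal{H}$, which is immediate from the definitions recalled in the preliminaries.
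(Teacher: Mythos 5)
Your proof is correct and follows essentially the same route as the paper: 0-minimality forces every nonzero $e\in\mathcal{H}$ to generate all of $\mathcal{H}$ as a principal $\Gamma$-left ideal, which gives both inclusions between $\mathcal{H}\setminus\{0\}$ and the $\mathcal{L}$-class of $e$. Your one refinement is that by working with $\{e\}\cup[\mathcal{T}\Gamma e]$ rather than $[\mathcal{T}\Gamma e]$ alone, you absorb the paper's separate case $[\mathcal{T}\Gamma e]=0$ (where $\mathcal{H}=\{0,e\}$ and the class degenerates to $\{e\}$) into the uniform argument.
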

\begin{proof}
$[\mathcal{T}\Gamma e]$ is a $\Gamma$-LId of $(\mathcal{T},\Gamma)$ included in $\mathcal{H}$ for all $e \in \mathcal{H}$.\\
So, $[\mathcal{T}\Gamma e]=0$ or $[\mathcal{T}\Gamma e]=\mathcal{H}$.\\
Assume that $[\mathcal{T}\Gamma e]=\mathcal{H}$ for every $e \in \mathcal{H}\setminus \{0\}$.\\
Then, $e \cup [\mathcal{T}\Gamma e]=\mathcal{H}=f \cup [\mathcal{T}\Gamma f]$ for all $e,f \in \mathcal{H}\setminus \{0\}$.\\
So, $ \mathcal{H}\setminus \{0\}$ is included in the $\mathcal{L}$-class $\mathcal{L}_{e}$.\\
$f \in \mathcal{L}_{e}$ implies $f \in e \cup [\mathcal{T}\Gamma e]=\mathcal{H}$.\\
So, that $\mathcal{L}_{e} \subseteq \mathcal{H}\setminus \{0\}$.\\
So, $\mathcal{H}\setminus \{0\}$ is a $\mathcal{L}$ class.\\
Assume some $e \in \mathcal{H}$, $[\mathcal{T}\Gamma e]=0$.\\
Thus, $\{0,e\}$ as the nonzero $\Gamma$-LId of $(\mathcal{T},\Gamma)$ included in $\mathcal{H}$.\\
This implies $\{0,e\}=\mathcal{H}$.\\
Thus $e \cup [\mathcal{T}\Gamma e]=\mathcal{H}$ and $e \mathcal{L} f \implies e=f$.\\
Therefore, $\mathcal{H}\setminus \{0\}$ is a $\mathcal{L}$ class of $(\mathcal{T},\Gamma)$.
\end{proof}
Similar way, we can show that for the $\Gamma$-RId.
\begin{theorem}
If $(\mathcal{T},\Gamma)$ is a completely 0-simple $\Gamma$-semigroup. Then, non zero members in $(\mathcal{T},\Gamma)$ forms $\mathcal{D}$ class and $(\mathcal{T},\Gamma)$ is $\Gamma$-regular.
\end{theorem}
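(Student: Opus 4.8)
The plan is to establish the two assertions in turn, noting that $\Gamma$-regularity will drop out once the $\mathcal{D}$-class statement is secured. Let $e$ denote a primitive idempotent, available since $(\mathcal{T},\Gamma)$ is complete. Because $[e\alpha e]=e$ for every $\alpha\in\Gamma$, the element $e$ is $\Gamma$-regular; hence, if I can show that $\mathcal{T}\setminus\{0\}$ is precisely the $\mathcal{D}$-class $\mathcal{D}_e$, then Theorem~\ref{thm2.1} makes every non-zero element regular (each lies in $\mathcal{D}_e$), while $0=[0\alpha 0]$ is regular trivially, so $(\mathcal{T},\Gamma)$ is $\Gamma$-regular. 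The theorem therefore reduces to the single claim that every non-zero element is $\mathcal{D}$-related to $e$.

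The inclusion $\mathcal{D}_e\subseteq\mathcal{T}\setminus\{0\}$ is immediate: since $e\neq 0$ belongs to $[\mathcal{T}\Gamma e]\cup\{e\}$, this set is not $\{0\}=[\mathcal{T}\Gamma 0]\cup\{0\}$, so $0$ is $\mathcal{L}$-related (and $\mathcal{R}$-related) only to itself and $\mathcal{D}_0=\{0\}\neq\mathcal{D}_e$. For the reverse inclusion I would first produce a 0-least $\Gamma$-left ideal and a 0-least $\Gamma$-right ideal from $e$, namely show that $\mathcal{H}:=[\mathcal{T}\Gamma e]\cup\{e\}$ is a 0-least $\Gamma$-LId and, dually, that $[e\Gamma\mathcal{T}]\cup\{e\}$ is a 0-least $\Gamma$-RId. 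Granting this, $\mathcal{T}$ is itself a 0-least $\Gamma$-2-Id (by 0-simplicity its only proper $\Gamma$-2-Id is $\{0\}$) and it contains the 0-least $\Gamma$-LId $\mathcal{H}$, so Theorem~\ref{thm4.3} shows that $\mathcal{T}$ is the union of all the 0-least $\Gamma$-LIds it contains, and dually the union of all the 0-least $\Gamma$-RIds it contains.

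From this covering the 0-leastness propagates to every principal one-sided ideal: for non-zero $a$ the element $a$ lies in some 0-least one-sided ideal $M$, whence the principal ideal it generates is a non-zero sub-ideal of $M$ and therefore equals $M$; so $[\mathcal{T}\Gamma a]\cup\{a\}$ and $[a\Gamma\mathcal{T}]\cup\{a\}$ are 0-least. Now fix any non-zero $a$. By Theorem~\ref{thm3.1}, $e\in[\mathcal{T}\Gamma a\Gamma\mathcal{T}]$, say $e=[x\mu a\nu z]$, and set $q=[x\mu a]$, which is non-zero as otherwise $e=[q\nu z]=0$. Since $q\in[\mathcal{T}\Gamma a]$, the 0-least ideal $[\mathcal{T}\Gamma q]\cup\{q\}$ is a non-zero sub-ideal of the 0-least ideal $[\mathcal{T}\Gamma a]\cup\{a\}$, forcing $q\,\mathcal{L}\,a$; and since $e=[q\nu z]\in[q\Gamma\mathcal{T}]$, the non-zero ideal $[e\Gamma\mathcal{T}]\cup\{e\}$ is a sub-ideal of the 0-least ideal $[q\Gamma\mathcal{T}]\cup\{q\}$, forcing $q\,\mathcal{R}\,e$. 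Hence $a\,(\mathcal{L}\circ\mathcal{R})\,e$, that is $a\,\mathcal{D}\,e$, which proves $\mathcal{T}\setminus\{0\}\subseteq\mathcal{D}_e$. (Lemma~\ref{lemma5.1} and its right-handed dual may also be invoked to identify these 0-least one-sided ideals, with $0$ removed, as the $\mathcal{L}$- and $\mathcal{R}$-classes themselves.)

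The hard part will be the opening structural step: showing that the primitive idempotent $e$ generates a 0-least $\Gamma$-left ideal, equivalently that at least one 0-least one-sided ideal exists. This is the $\Gamma$-analogue of the classical fact that, in a 0-simple semigroup, a non-zero idempotent is primitive exactly when its principal one-sided ideals are 0-minimal. I expect to argue by contradiction: given a $\Gamma$-LId $N$ with $0\neq N\subsetneq\mathcal{H}$ and a non-zero $a\in N$, one first observes $[a\gamma e]=a$ for all $\gamma\in\Gamma$ (using $[e\gamma e]=e$), then invokes 0-simplicity to write $e\in[\mathcal{T}\Gamma a\Gamma\mathcal{T}]$ and manufactures from $a$ a non-zero idempotent lying strictly below $e$ in the partial order, contradicting primitivity. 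This construction is the delicate point, precisely because the product is ternary: each multiplication must be routed through a connecting element of $\Gamma$, so one has to track carefully which $\gamma\in\Gamma$ occurs at each stage, and one must also confirm that $\mathcal{D}=\mathcal{L}\circ\mathcal{R}=\mathcal{R}\circ\mathcal{L}$ in the $\Gamma$-setting, so that the composite relation produced above is genuinely $\mathcal{D}$.
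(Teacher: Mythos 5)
Your proposal is correct and shares the paper's overall architecture --- cover $\mathcal{T}$ by 0-least one-sided ideals, identify these (minus zero) with $\mathcal{L}$- and $\mathcal{R}$-classes, and join two elements through an intermediate one --- but it differs at the decisive step. The paper takes two arbitrary non-zero $e,f$, places them in a 0-least $\Gamma$-LId and a 0-least $\Gamma$-RId, and asserts that $[f\Gamma e]\neq 0$ so that some $c\in[f\Gamma e]$ witnesses $e\,\mathcal{L}\,c\,\mathcal{R}\,f$; that non-vanishing is stated without justification, and it is genuinely delicate (in a Rees-matrix model a product of two particular non-zero elements can be zero when the corresponding sandwich entry vanishes, so the claim needs the right choice of pair, e.g.\ via Lemma~\ref{lemma5.2}). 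You instead anchor everything at the primitive idempotent $e$ and manufacture the intermediate element $q=[x\mu a]$ from a factorization $e=[x\mu a\nu z]$ supplied by Theorem~\ref{thm3.1}; here $q\neq 0$ is automatic because $e\neq 0$, so your route is watertight where the paper's is glossed. The price you pay is that you defer the existence of a 0-least one-sided ideal to the claim that the principal ideals of the primitive idempotent are 0-least; you only sketch this, but it is exactly the paper's Lemma~\ref{lemma5.6}, proved there in full by the same idempotent-manufacturing contradiction you describe, so you may simply cite it. Your closing caveats (that $e\in[\mathcal{T}\Gamma e]$ so the principal ideal needs no adjoined $\{e\}$, and that one should check $\mathcal{L}\circ\mathcal{R}=\mathcal{R}\circ\mathcal{L}$ so that the composite relation you exhibit really is the equivalence $\mathcal{D}$ of the paper's Definition) are both apt; neither is addressed in the paper either.
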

\begin{proof}
Consider $(\mathcal{T},\Gamma)$ as a completely 0-2-sided simple $\Gamma$-semigroup.\\
Take nonzero element $e,f \in \mathcal{T}$.\\
Then, e is in some 0-least $\Gamma$-LId $\mathcal{H}$, f is in some 0-least $\Gamma$-RId $\mathcal{I}$.\\
So, $\mathcal{H}=[\mathcal{T}\Gamma e]$ and $\mathcal{I}=[f\Gamma \mathcal{T}]$.\\
By \textbf{Lemma}\ref{lemma5.1}, $\mathcal{H}\setminus \{0\}$ is a $\mathcal{L}$ class of $(\mathcal{T},\Gamma)$ containing e and $\mathcal{I}\setminus \{0\}$ is a $\mathcal{R}$ class of $(\mathcal{T},\Gamma)$ containing f.\\
Thus, $[f\Gamma e] \subseteq \mathcal{H}_{e} \cap \mathcal{I}_{f}$ and $[f\Gamma e] \neq 0$.\\
Let $c \in \mathcal{H}_{e} \cap \mathcal{I}_{f}$. then $e \mathcal{L} c$, $c \mathcal{R} f$ which implies $e \mathcal{D} f$.\\
Since, $(\mathcal{T},\Gamma)$ contains primitive idempotent say $a \neq 0$ which is in $\mathcal{D}$.\\
So, a is a regular element.\\
Since, $\mathcal{T}\setminus \{0\}$ is a $\mathcal{D}$-class.
By \textbf{Theorem}\ref{thm2.1}, $\mathcal{T}\setminus \{0\}$ is $\Gamma$-regular.\\
Therefore, we shown $(\mathcal{T},\Gamma)$ as a $\Gamma$-regular.
\end{proof}
\begin{example}
    Let $\mathcal{T}=\{i, -i, 1, -1\}$ and $\Gamma$=\{i\}. $(\mathcal{T}, \Gamma)$ under usual multiplication is completely 0-simple $\Gamma$-semigroup and is $\Gamma$-regular.
\end{example}
\begin{lemma}\label{lemma5.2}
If $(\mathcal{T},\Gamma)$ is a 0-2-sided simple $\Gamma$-semigroup containing a 0-least $\Gamma$-LId and a 0-least $\Gamma$-RId. Then, for each 0-least $\Gamma$-LId $\mathcal{H}$ of $(\mathcal{T},\Gamma)$, there is a 0-least $\Gamma$-RId $\mathcal{I}$, $[\mathcal{H}\Gamma \mathcal{I}]\neq 0$ and $[\mathcal{H}\Gamma \mathcal{I}]=\mathcal{T}$.
\end{lemma}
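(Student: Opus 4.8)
The plan is to exploit the fact that a product of the form $[\mathcal{H}\Gamma\mathcal{I}]$, where $\mathcal{H}$ is a $\Gamma$-LId and $\mathcal{I}$ is a $\Gamma$-RId, is automatically a $\Gamma$-2-sided ideal. Indeed, associativity together with the one-sided ideal properties give $[\mathcal{T}\Gamma[\mathcal{H}\Gamma\mathcal{I}]]=[[\mathcal{T}\Gamma\mathcal{H}]\Gamma\mathcal{I}]\subseteq[\mathcal{H}\Gamma\mathcal{I}]$ and $[[\mathcal{H}\Gamma\mathcal{I}]\Gamma\mathcal{T}]=[\mathcal{H}\Gamma[\mathcal{I}\Gamma\mathcal{T}]]\subseteq[\mathcal{H}\Gamma\mathcal{I}]$, so $[\mathcal{H}\Gamma\mathcal{I}]$ is closed on both sides. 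Because $(\mathcal{T},\Gamma)$ is 0-2-sided simple, every $\Gamma$-2-Id equals either $0$ or $\mathcal{T}$; hence for any 0-least $\Gamma$-RId $\mathcal{I}$ we automatically have $[\mathcal{H}\Gamma\mathcal{I}]\in\{0,\mathcal{T}\}$. The entire content of the lemma therefore reduces to producing a single 0-least $\Gamma$-RId $\mathcal{I}$ with $[\mathcal{H}\Gamma\mathcal{I}]\neq 0$, after which $[\mathcal{H}\Gamma\mathcal{I}]=\mathcal{T}$ follows at once.

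To locate such an $\mathcal{I}$, I would first establish $[\mathcal{H}\Gamma\mathcal{T}]=\mathcal{T}$. By the same computation $[\mathcal{H}\Gamma\mathcal{T}]$ is a $\Gamma$-2-Id, so it is $0$ or $\mathcal{T}$. If it were $0$, then $[\mathcal{H}\Gamma\mathcal{T}]\subseteq\mathcal{H}$ would make $\mathcal{H}$ a $\Gamma$-RId, hence a nonzero $\Gamma$-2-Id, forcing $\mathcal{H}=\mathcal{T}$ and so $[\mathcal{T}\Gamma\mathcal{T}]=0$, contradicting the defining condition $[\mathcal{T}\Gamma\mathcal{T}]\neq 0$ of a 0-simple $\Gamma$-semigroup. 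Thus $[\mathcal{H}\Gamma\mathcal{T}]=\mathcal{T}\neq 0$.

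Next I would write $\mathcal{T}$ as a union of 0-least $\Gamma$-RIds. Since $(\mathcal{T},\Gamma)$ is 0-2-sided simple, $\mathcal{T}$ is itself a 0-least $\Gamma$-2-Id, and by hypothesis it contains at least one 0-least $\Gamma$-RId; the $\Gamma$-RId analogue of \textbf{Theorem}~\ref{thm4.3} then yields $\mathcal{T}=\bigcup\{\mathcal{I}:\mathcal{I}\text{ is a 0-least }\Gamma\text{-RId}\}$. As the ternary product distributes over unions in its last argument, $\mathcal{T}=[\mathcal{H}\Gamma\mathcal{T}]=\bigcup_{\mathcal{I}}[\mathcal{H}\Gamma\mathcal{I}]$. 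Were every term $[\mathcal{H}\Gamma\mathcal{I}]$ equal to $0$, the union would be $0$, contradicting $[\mathcal{H}\Gamma\mathcal{T}]=\mathcal{T}\neq 0$. Hence some 0-least $\Gamma$-RId $\mathcal{I}$ satisfies $[\mathcal{H}\Gamma\mathcal{I}]\neq 0$, and by the first paragraph $[\mathcal{H}\Gamma\mathcal{I}]=\mathcal{T}$, which is precisely the assertion of the lemma.

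The step demanding the most care is the identity $[\mathcal{H}\Gamma\mathcal{T}]=\mathcal{T}$ together with the use of the $\Gamma$-RId version of \textbf{Theorem}~\ref{thm4.3}: one must verify that $\mathcal{T}$ genuinely qualifies as a 0-least $\Gamma$-2-Id, so that its covering by 0-least $\Gamma$-RIds is available, and that $[\,\cdot\,\Gamma\,\cdot\,]$ distributes over arbitrary unions. The remaining manipulations are formal, relying only on associativity, the one-sided ideal closure properties, and the dichotomy ``$0$ or $\mathcal{T}$'' provided by 0-2-sided simplicity.
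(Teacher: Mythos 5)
Your argument is correct and follows essentially the same route as the paper's own proof: rule out $[\mathcal{H}\Gamma\mathcal{T}]=0$ by noting it would make $\mathcal{H}$ a nonzero $\Gamma$-2-Id, hence $\mathcal{H}=\mathcal{T}$ and $[\mathcal{T}\Gamma\mathcal{T}]=0$, a contradiction; then decompose $\mathcal{T}$ into its 0-least $\Gamma$-RIds via the right-ideal analogue of Theorem~\ref{thm4.3} to locate an $\mathcal{I}$ with $[\mathcal{H}\Gamma\mathcal{I}]\neq 0$, and conclude $[\mathcal{H}\Gamma\mathcal{I}]=\mathcal{T}$ from 0-simplicity because $[\mathcal{H}\Gamma\mathcal{I}]$ is a $\Gamma$-2-Id. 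You merely make explicit several steps the paper leaves implicit (the verification that $[\mathcal{H}\Gamma\mathcal{I}]$ is two-sided and the distribution of the product over the union), so the strategy is the same.
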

\begin{proof}
$[\mathcal{H}\Gamma \mathcal{T}]$ is a 2-Id of $(\mathcal{T},\Gamma)$.
Then, $[\mathcal{H}\Gamma \mathcal{T}]=0$ or $[\mathcal{H}\Gamma \mathcal{T}]=\mathcal{H}$.\\
Suppose $[\mathcal{H}\Gamma \mathcal{T}]=0$.
Then, $[\mathcal{H}\Gamma \mathcal{H}]=0$ and $\mathcal{H}$ is a 2-Id of $(\mathcal{T},\Gamma)$.\\
So, $\mathcal{T}=\mathcal{H}$.
Therefore, $[\mathcal{T}\Gamma \mathcal{T}]=[\mathcal{H}\Gamma \mathcal{T}]=0$ which is a contradiction to our assumption.\\
So, $[\mathcal{H}\Gamma \mathcal{T}]=\mathcal{H}$.\\
$[\mathcal{H}\Gamma x] \neq 0$ for some $x \in \mathcal{T}$.
By \textbf{Theorem}\ref{thm4.3}, $\mathcal{T}$ is the union of all 0-least $\Gamma$-RId of $\mathcal{T}$.\\
So, $[\mathcal{H}\Gamma \mathcal{I}]\neq 0$.\\
$[\mathcal{H}\Gamma \mathcal{I}]$ is the 2-Id of $(\mathcal{T},\Gamma)$.\\
Therefore, $[\mathcal{H}\Gamma \mathcal{I}]=\mathcal{T}$.
\end{proof}
\begin{lemma}\label{lemma5.3}
Consider $\mathcal{H}$ as  a 0-least LId of a 0-simple $\Gamma$-semigroup $(\mathcal{T},\Gamma)$ and $g \in \mathcal{H}\{0\}$. Then, $[\mathcal{H}\Gamma g]=\mathcal{H}$.
\end{lemma}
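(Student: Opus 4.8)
The plan is to first identify what kind of object $[\mathcal{H}\Gamma g]$ is and then use $0$-simplicity to exclude the degenerate possibility. Since $g\in\mathcal{H}$ and $\mathcal{H}$ is a $\Gamma$-left ideal, $[\mathcal{T}\Gamma[\mathcal{H}\Gamma g]]=[[\mathcal{T}\Gamma\mathcal{H}]\Gamma g]\subseteq[\mathcal{H}\Gamma g]$, so $[\mathcal{H}\Gamma g]$ is itself a $\Gamma$-left ideal, and it lies inside $[\mathcal{H}\Gamma\mathcal{H}]\subseteq\mathcal{H}$. Because $\mathcal{H}$ is $0$-least, the only $\Gamma$-left ideals it contains are $(0)$ and $\mathcal{H}$ itself; hence $[\mathcal{H}\Gamma g]=(0)$ or $[\mathcal{H}\Gamma g]=\mathcal{H}$, and the whole statement reduces to ruling out the first alternative, i.e.\ to showing that the nonzero element $g$ does not annihilate $\mathcal{H}$ on the right.

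Before attempting that, I would extract two consequences of $0$-simplicity. First, $\{x\in\mathcal{T}:[\mathcal{T}\Gamma x]=0\}$ is a $\Gamma$-$2$-sided ideal (closure under multiplication on either side is immediate), so it equals $(0)$; since $g\neq0$ this forces $[\mathcal{T}\Gamma g]\neq0$, and as $[\mathcal{T}\Gamma g]$ is a $\Gamma$-left ideal contained in $\mathcal{H}$ it must equal $\mathcal{H}$. In particular $\mathcal{H}=[\mathcal{T}\Gamma g]$ and $[\mathcal{H}\Gamma\mathcal{H}]\neq0$, so Lemma~\ref{lemma4.3} is applicable. Second, Theorem~\ref{thm3.1} gives $[\mathcal{T}\Gamma g\Gamma\mathcal{T}]=\mathcal{T}$. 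Combining these, $g\in\mathcal{H}=[\mathcal{T}\Gamma g]$ yields $g=[t\gamma g]$ with $t\in\mathcal{T}$, and $t\in\mathcal{T}=[\mathcal{T}\Gamma g\Gamma\mathcal{T}]$ yields $t=[a\alpha g\beta b]$, whence by associativity
\[
g=[a\alpha g\beta b\gamma g]=[h_{1}\beta h_{2}],\qquad h_{1}=[a\alpha g],\ h_{2}=[b\gamma g].
\]
Both $h_{1},h_{2}$ lie in $[\mathcal{T}\Gamma g]=\mathcal{H}$, and since $g\neq0$ they are nonzero; this re-confirms $[\mathcal{H}\Gamma\mathcal{H}]\neq0$ and records the useful factorization $g=[h_{1}\beta h_{2}]$.

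The final step is to exclude $[\mathcal{H}\Gamma g]=(0)$. Using $g=[h_{1}\beta h_{2}]$ and distributing the union over $\Gamma$ one gets $[\mathcal{H}\Gamma g]=[[\mathcal{H}\Gamma h_{1}]\beta h_{2}]$. Here $[\mathcal{H}\Gamma h_{1}]$ is again a $\Gamma$-left ideal inside $\mathcal{H}$, so it is $(0)$ or $\mathcal{H}$; in the favourable case $[\mathcal{H}\Gamma h_{1}]=\mathcal{H}$ we obtain $[\mathcal{H}\Gamma g]=[\mathcal{H}\beta h_{2}]$, which is a $\Gamma$-left ideal of $\mathcal{H}$ that contains $g=[h_{1}\beta h_{2}]\neq0$ and is therefore all of $\mathcal{H}$, as required. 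I expect \emph{this} to be the main obstacle: the reduction only transports the problem from $g$ to $h_{1}$, and one still has to prove that no nonzero element of $\mathcal{H}$ is a right-annihilator of $\mathcal{H}$. The tempting shortcut of passing to the annihilator $\{x\in\mathcal{T}:[\mathcal{H}\Gamma x]=0\}$ fails, because that set is only a $\Gamma$-\emph{right} ideal, so $0$-simplicity (a two-sided condition) does not collapse it. Closing the argument therefore requires genuinely exploiting the $0$-minimality of $\mathcal{H}$ together with the fact, from Lemma~\ref{lemma5.1}, that $\mathcal{H}\setminus\{0\}$ is a single $\mathcal{L}$-class, so that the nonvanishing of products can be propagated across all of $\mathcal{H}\setminus\{0\}$ rather than established one element at a time.
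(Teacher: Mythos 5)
You are right that the argument stalls exactly where you say it does, and you should not feel you missed a trick: the missing step cannot be supplied, because the statement as printed is false. Take $\Gamma=\{\gamma\}$ and let $\mathcal{T}$ be the five-element Brandt semigroup $\{(i,\lambda):i,\lambda\in\{1,2\}\}\cup\{0\}$ with $(i,\lambda)\gamma(j,\mu)=(i,\mu)$ if $\lambda=j$ and $0$ otherwise. This is (completely) $0$-simple, $\mathcal{H}=\{(1,1),(2,1),0\}$ is a $0$-least $\Gamma$-left ideal, and $g=(2,1)\in\mathcal{H}\setminus\{0\}$, yet $[\mathcal{H}\Gamma g]=\{(i,1)\gamma(2,1)\}=\{0\}$ since $1\neq 2$. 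So no appeal to $0$-minimality or to Lemma~\ref{lemma5.1} can rule out the alternative $[\mathcal{H}\Gamma g]=0$; indeed your closing remark that the annihilator $\{x:[\mathcal{H}\Gamma x]=0\}$ is only a right ideal pinpoints precisely why the two-sided simplicity hypothesis has no purchase here.

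The resolution is that ``$[\mathcal{H}\Gamma g]$'' in the statement is evidently a misprint for ``$[\mathcal{T}\Gamma g]$'': the paper's own proof establishes only that $[\mathcal{T}\Gamma g]$ is a $\Gamma$-left ideal contained in $\mathcal{H}$ which is nonzero because $[\mathcal{T}\Gamma g\Gamma\mathcal{T}]=\mathcal{T}$, hence equals $\mathcal{H}$ --- and that is also the form in which the lemma is invoked later (Lemma~\ref{lemma5.5} cites it as $[\mathcal{T}\Gamma e]=\mathcal{H}$). Your second paragraph, which derives $[\mathcal{T}\Gamma g]\neq 0$ from the observation that $\{x\in\mathcal{T}:[\mathcal{T}\Gamma x]=0\}$ is a $\Gamma$-two-sided ideal and concludes $[\mathcal{T}\Gamma g]=\mathcal{H}$, is therefore a complete and correct proof of the intended statement, differing from the paper only in that you use the annihilator-ideal argument where the paper cites Proposition~\ref{prop3.1} directly. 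Everything after that point in your proposal is an attempt to prove a stronger, false claim and should be discarded.
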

\begin{proof}
Since, $[\mathcal{T}\Gamma g]$ is a LId of $(\mathcal{T},\Gamma)$ included in $\mathcal{H}$.
So, $[\mathcal{T}\Gamma g]=0$ or $[\mathcal{T}\Gamma g]=\mathcal{H}$.\\
Since, $\mathcal{T}$ is 0-simple, by \textbf{Proposition}\ref{prop3.1}, $\mathcal{T}=\mathcal{T}\Gamma g \Gamma \mathcal{T}$.\\
Therefore, $[\mathcal{T}\Gamma g]\neq 0$.\\
Thus, $[\mathcal{T}\Gamma g]=\mathcal{H}$.
\end{proof}
\begin{lemma}\label{lemma5.4}
If $(\mathcal{T},\Gamma)$ is a  0-2-sided simple $\Gamma$-semigroup. Let $\mathcal{H}$ and $\mathcal{I}$ be a 0-least $\Gamma$-LId and a 0-least $\Gamma$-RId respectively with $[\mathcal{H}\Gamma \mathcal{I}]\neq 0$. Then, 
\begin{enumerate}
\item $[\mathcal{I}\Gamma \mathcal{H}]$ is a $\Gamma$-group with zero.
\item $[\mathcal{I}\Gamma \mathcal{H}]=\mathcal{I} \cap \mathcal{H}$.
\end{enumerate}
\end{lemma}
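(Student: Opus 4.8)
The plan is to write $G=[\mathcal{I}\Gamma\mathcal{H}]$, assemble a few ideal identities, manufacture an idempotent in the Green $\mathcal{H}$-class sitting inside $\mathcal{I}\cap\mathcal{H}$, and then read off both assertions; I expect the idempotent to be the only real difficulty. \textbf{Step 1 (ideal identities).} First I would record that $[\mathcal{H}\Gamma\mathcal{I}]$ is a $\Gamma$-2-Id (left ideal $\mathcal{H}$ on the left, right ideal $\mathcal{I}$ on the right), so by 0-simplicity and the hypothesis $[\mathcal{H}\Gamma\mathcal{I}]\neq0$ we get $[\mathcal{H}\Gamma\mathcal{I}]=\mathcal{T}$; in particular $[\mathcal{T}\Gamma\mathcal{T}]=\mathcal{T}$. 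Next, $[\mathcal{I}\Gamma\mathcal{T}]$ is a $\Gamma$-RId contained in the 0-least ideal $\mathcal{I}$, hence $0$ or $\mathcal{I}$; it cannot be $0$, for then $[\mathcal{H}\Gamma[\mathcal{I}\Gamma\mathcal{T}]]=0$ contradicts $[[\mathcal{H}\Gamma\mathcal{I}]\Gamma\mathcal{T}]=[\mathcal{T}\Gamma\mathcal{T}]=\mathcal{T}$. Thus $[\mathcal{I}\Gamma\mathcal{T}]=\mathcal{I}$ and dually $[\mathcal{T}\Gamma\mathcal{H}]=\mathcal{H}$. Finally, Lemma~\ref{lemma5.3} and its right-hand dual give $[\mathcal{T}\Gamma a]=\mathcal{H}$ for each nonzero $a\in\mathcal{H}$ and $[a\Gamma\mathcal{T}]=\mathcal{I}$ for each nonzero $a\in\mathcal{I}$.

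\textbf{Step 2 (locate the $\mathcal{H}$-class; show $[G\Gamma G]=G\neq0$).} By Lemma~\ref{lemma5.1} and its dual, $\mathcal{H}\setminus\{0\}$ is an $\mathcal{L}$-class and $\mathcal{I}\setminus\{0\}$ is an $\mathcal{R}$-class, so $H_{0}:=(\mathcal{I}\cap\mathcal{H})\setminus\{0\}$ is a single Green $\mathcal{H}$-class. The inclusion $G\subseteq\mathcal{I}\cap\mathcal{H}$ is clear from the one-sided ideal properties. Associativity together with Step 1 gives
\[
[G\Gamma G]=[\mathcal{I}\Gamma[\mathcal{H}\Gamma\mathcal{I}]\Gamma\mathcal{H}]=[\mathcal{I}\Gamma\mathcal{T}\Gamma\mathcal{H}]=[[\mathcal{I}\Gamma\mathcal{T}]\Gamma\mathcal{H}]=[\mathcal{I}\Gamma\mathcal{H}]=G ,
\]
and $G\neq0$ because $[\mathcal{I}\Gamma\mathcal{H}]=0$ would force $[\mathcal{T}\Gamma\mathcal{T}]=[\mathcal{H}\Gamma[\mathcal{I}\Gamma\mathcal{H}]\Gamma\mathcal{I}]=0$. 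Since $G\setminus\{0\}\subseteq H_{0}$, the equality $[G\Gamma G]=G$ exhibits a nonzero element of $H_{0}$ that is a $\Gamma$-product of two elements of $H_{0}$, i.e. $[H_{0}\Gamma H_{0}]\cap H_{0}\neq\emptyset$.

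\textbf{Step 3 (idempotent, then both statements).} By the $\Gamma$-analogue of Green's group theorem — an $\mathcal{H}$-class $H$ with $[H\Gamma H]\cap H\neq\emptyset$ is a $\Gamma$-group — $H_{0}$ is a $\Gamma$-group whose identity is a $\Gamma$-idempotent $e$ with $[e\gamma e]=e$ for all $\gamma\in\Gamma$. To prove statement~2, take $0\neq x\in\mathcal{I}\cap\mathcal{H}=H_{0}\cup\{0\}$. Then $x\,\mathcal{L}\,e$, so $x\in[\mathcal{T}\Gamma e]\cup\{e\}$. If $x=e$ then $x=[e\gamma e]\in[\mathcal{I}\Gamma\mathcal{H}]=G$; if $x=[t\delta e]$ then $[x\gamma e]=[t\delta[e\gamma e]]=[t\delta e]=x$, so $x=[x\gamma e]\in[\mathcal{I}\Gamma\mathcal{H}]=G$ (using $x\in\mathcal{I}$, $e\in\mathcal{H}$). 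As $0\in G$ automatically, $\mathcal{I}\cap\mathcal{H}\subseteq G$, whence $G=\mathcal{I}\cap\mathcal{H}$. Statement~1 then follows at once: $G=\mathcal{I}\cap\mathcal{H}=H_{0}\cup\{0\}$ with $H_{0}$ a $\Gamma$-group, i.e. $G$ is a $\Gamma$-group with zero.

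\textbf{Main obstacle.} Everything except the first sentence of Step 3 is substitution of the Step 1 identities. The genuine work is the production of the idempotent $e$ inside the $\mathcal{H}$-class $H_{0}$. I would either invoke the $\Gamma$-version of Green's theorem supplied by the Green's-relations framework for $\Gamma$-semigroups, or reprove it in place by adapting the classical argument: for $a,b\in H_{0}$ with $[a\gamma b]\in H_{0}$, the inner right translation $z\mapsto[z\gamma b]$ and left translation $z\mapsto[a\gamma z]$ restrict to mutually inverse bijections of $H_{0}$, which forces a two-sided identity, hence an idempotent, in $H_{0}$.
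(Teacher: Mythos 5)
Your argument is correct, but it reaches the conclusion by a genuinely different route than the paper. The paper's own proof shows directly that $[\mathcal{I}\Gamma \mathcal{H}]$ is $\Gamma$-left simple and $\Gamma$-right simple (via $[\mathcal{I}\Gamma \mathcal{H}]\Gamma g=[\mathcal{I}\Gamma \mathcal{H}]$ and its dual for a nonzero $g$) and then invokes Theorem 2.1 of Sen--Saha, which says a left and right simple $\Gamma$-semigroup is a $\Gamma$-group with zero; it then repeats the same computation for $\mathcal{I}\cap\mathcal{H}$ and only afterwards identifies the two sets. You instead package the same ideal identities into the Green's-relations framework: Lemma 5.1 and its dual make $(\mathcal{I}\cap\mathcal{H})\setminus\{0\}$ a single $\mathcal{H}$-class, your computation $[G\Gamma G]=G\neq 0$ places a product of two of its elements back inside it, and the $\Gamma$-analogue of Green's theorem delivers the group structure together with an idempotent $e$, from which statement 2 follows by a one-line translation argument and statement 1 is immediate. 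The trade-off is which structural black box each proof rests on: the paper leans on Sen--Saha's ``left simple $+$ right simple $\Rightarrow$ $\Gamma$-group,'' while you lean on the $\mathcal{H}$-class form of Green's theorem for $\Gamma$-semigroups, which is stated nowhere in this paper; you rightly flag this as the one ingredient that must be either cited (e.g., from the Chinram--Siammai development of Green's relations) or proved in place by the mutually inverse translation argument you sketch, and that argument does adapt to the $\Gamma$-setting. A side benefit of your Step 1 is that it makes explicit several facts the paper's proof uses silently, such as where the element $g$ lives and why $[\mathcal{H}\Gamma g]=\mathcal{H}$. One small point of care: when you assert $[e\gamma e]=e$ for \emph{all} $\gamma\in\Gamma$, Green's theorem only supplies an idempotent relative to some particular $\gamma$; your subsequent computation uses only one such $\gamma$, so nothing breaks, but the quantifier should read ``for some $\gamma$.''
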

\begin{proof}
\begin{enumerate}
\item
Since, $(\mathcal{T},\Gamma)$ is a completely 0-2-sided simple. By \textbf{Proposition}\ref{prop3.1}, $\mathcal{T}=\mathcal{T}\Gamma g \Gamma \mathcal{T}$.\\
This implies $[g \Gamma \mathcal{T}]=\mathcal{I}$ and $[\mathcal{T}\Gamma g]=\mathcal{H}$.\\
By \textbf{Lemma}\ref{lemma5.2}, $[\mathcal{H}\Gamma \mathcal{I}]=\mathcal{T}$.\\
$\mathcal{T}=\mathcal{H}\Gamma g \Gamma \mathcal{T}$.
So, $[\mathcal{H}\Gamma g]\neq 0$ and $[\mathcal{H}\Gamma g]=\mathcal{H}$.\\
Therefore, $[\mathcal{I}\Gamma \mathcal{H}]\Gamma g=[\mathcal{I}\Gamma \mathcal{H}]$.\\
Thus, $[\mathcal{I}\Gamma \mathcal{H}]$ is $\Gamma$-left simple.\\
Similarly, we can show that $[\mathcal{I}\Gamma \mathcal{H}]$ is $\Gamma$-right simple.\\ 
So, $[\mathcal{I}\Gamma \mathcal{H}]$ is a $\Gamma$-group with zero by \textbf{Theorem 2.1} in \cite{sen1986semigroup}.
\item 
$[(\mathcal{I} \cap \mathcal{H})\Gamma g]=[\mathcal{I} \Gamma g] \cap [\mathcal{H} \Gamma g] = (\mathcal{I} \cap \mathcal{H})$.\\
So, $(\mathcal{I} \cap \mathcal{H})$ is $\Gamma$-left simple.\\
Similarly, we can show that $(\mathcal{I} \cap \mathcal{H})$ is $\Gamma$-right simple.\\ 
So, $(\mathcal{I} \cap \mathcal{H})$ is a $\Gamma$-group with zero.\\
Let $g \in (\mathcal{I} \cap \mathcal{H})$.
Then, $g=g \Gamma e \in \mathcal{I}\Gamma \mathcal{H}$.
So, $(\mathcal{I} \cap \mathcal{H}) \cap [\mathcal{I}\Gamma \mathcal{H}]$.\\
Clearly, $[\mathcal{I}\Gamma \mathcal{H}] \cap (\mathcal{I} \cap \mathcal{H})$,\\
Therefore, $[\mathcal{I}\Gamma \mathcal{H}] = (\mathcal{I} \cap \mathcal{H})$
\end{enumerate}
\end{proof}
\begin{lemma}\label{lemma5.5}
If $(\mathcal{T},\Gamma)$ is a  0-2-sided simple $\Gamma$-semigroup. Let $\mathcal{H}$ and $\mathcal{I}$ be a 0-least $\Gamma$-LId and a 0-least $\Gamma$-RId respectively with $[\mathcal{H}\Gamma \mathcal{I}]\neq 0$. Let e be the identity element of $\mathcal{H}$ and $\mathcal{I}$. Then, 
\begin{enumerate}
\item $\mathcal{I}=[e\Gamma \mathcal{T}]$, $\mathcal{H}=[\mathcal{T}\Gamma e]$ and $[\mathcal{I}\Gamma \mathcal{H}]=[e\Gamma \mathcal{T} \Gamma e]$.
\item e is the primitive idempotent of $(\mathcal{T},\Gamma)$.
\end{enumerate}
\end{lemma}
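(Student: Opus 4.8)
The plan is to build everything on Lemma~\ref{lemma5.4}, which identifies $[\mathcal{I}\Gamma\mathcal{H}] = \mathcal{I}\cap\mathcal{H}$ as a $\Gamma$-group with zero. The element $e$ is by hypothesis the identity of this $\Gamma$-group, so $e$ is a nonzero $\Gamma$-idempotent (for every $\gamma\in\Gamma$, $[e\gamma e]=e$) that lies in both $\mathcal{H}$ and $\mathcal{I}$. For the first part I would first record the consequence $[e\gamma e]=e\neq 0$: since $e\in\mathcal{H}$ this forces $[\mathcal{H}\Gamma\mathcal{H}]\neq 0$, and since $e\in\mathcal{I}$ it forces $[\mathcal{I}\Gamma\mathcal{I}]\neq 0$. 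These are exactly the hypotheses under which Lemma~\ref{lemma4.3} applies to the nonzero element $e\in\mathcal{H}$, giving $\mathcal{H}=[\mathcal{T}\Gamma e]$; the right-handed dual of Lemma~\ref{lemma4.3} applied to $e\in\mathcal{I}$ gives $\mathcal{I}=[e\Gamma\mathcal{T}]$.

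To obtain the remaining identity $[\mathcal{I}\Gamma\mathcal{H}]=[e\Gamma\mathcal{T}\Gamma e]$, I would substitute the two descriptions just derived: $[\mathcal{I}\Gamma\mathcal{H}]=[[e\Gamma\mathcal{T}]\Gamma[\mathcal{T}\Gamma e]]=[e\Gamma\mathcal{T}\Gamma\mathcal{T}\Gamma e]$, and then collapse the interior using $[\mathcal{T}\Gamma\mathcal{T}]=\mathcal{T}$. This last equality is available because $(\mathcal{T},\Gamma)$ is $0$-simple: $[\mathcal{T}\Gamma\mathcal{T}]$ is a nonzero $\Gamma$-$2$-sided ideal, hence must equal $\mathcal{T}$ (cf.\ the argument in Theorem~\ref{thm3.1} and Proposition~\ref{prop3.1}). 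This completes part~(1).

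For part~(2), $e$ is nonzero and idempotent, so it is a legitimate candidate for a primitive idempotent, and only minimality remains. I would take an arbitrary nonzero idempotent $f$ with $f\leq e$, so that there exist $x,y\in\Gamma$ with $[fxe]=[eyf]=f$. The equation $[fxe]=f$ exhibits $f$ as an element of $[\mathcal{T}\Gamma e]=\mathcal{H}$, while $[eyf]=f$ exhibits $f$ as an element of $[e\Gamma\mathcal{T}]=\mathcal{I}$; hence $f\in\mathcal{H}\cap\mathcal{I}=[\mathcal{I}\Gamma\mathcal{H}]$ by part~(1) and Lemma~\ref{lemma5.4}. Since $[\mathcal{I}\Gamma\mathcal{H}]$ is a $\Gamma$-group with zero, its only nonzero idempotent is its identity $e$, so $f=e$. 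Thus no nonzero idempotent lies strictly below $e$, and $e$ is primitive.

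The main obstacle I anticipate is not the computation but two supporting facts. First, the passage to $\mathcal{H}=[\mathcal{T}\Gamma e]$ rests on verifying $[\mathcal{H}\Gamma\mathcal{H}]\neq 0$ (and its right-sided analogue), which comes from the idempotency of $e$ rather than directly from the hypothesis $[\mathcal{H}\Gamma\mathcal{I}]\neq 0$; I would state this explicitly so that Lemma~\ref{lemma4.3} is applied legitimately. Second, the primitivity argument hinges on the group-theoretic fact that a $\Gamma$-group with zero has a unique nonzero idempotent, namely its identity; I would include a brief justification, since an idempotent $f$ in a group satisfies $f=e$ by cancellation, to make the final step watertight.
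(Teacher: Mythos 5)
Your proposal is correct and follows essentially the same route as the paper: establish $\mathcal{H}=[\mathcal{T}\Gamma e]$ and $\mathcal{I}=[e\Gamma \mathcal{T}]$, collapse $[\mathcal{I}\Gamma \mathcal{H}]$ to $[e\Gamma \mathcal{T}\Gamma e]$ via $[\mathcal{T}\Gamma \mathcal{T}]=\mathcal{T}$, and then trap any nonzero idempotent $f\leq e$ inside the $\Gamma$-group with zero $[\mathcal{I}\Gamma \mathcal{H}]$, whose only nonzero idempotent is its identity. The only divergences are that you reach $\mathcal{H}=[\mathcal{T}\Gamma e]$ through Lemma~\ref{lemma4.3} (after verifying $[\mathcal{H}\Gamma \mathcal{H}]\neq 0$ from the idempotency of $e$) where the paper cites Lemma~\ref{lemma5.3}, and that your reading of the partial order in part~(2) ($f\leq e$ with $[fxe]=[eyf]=f$) is the correct one, which the paper's own proof misstates as $f\geq e$ with $[fxe]=[eyf]=e$.
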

\begin{proof}
\begin{enumerate}
\item 
$e \in \mathcal{H}\{0\}$.\\
By \textbf{Lemma}\ref{lemma5.3}, $[\mathcal{T}\Gamma e]=\mathcal{H}$.\\
Similarly, $[e \Gamma \mathcal{T}]=\mathcal{I}$.\\
$[\mathcal{I}\Gamma \mathcal{H}]=[e\Gamma \mathcal{T} \Gamma \mathcal{T} \Gamma e]=[e\Gamma \mathcal{T} \Gamma e]$.
\item 
Let f be an idempotent of $\mathcal{T}$ such that $f \geq e$.\\
$[fxe]=[eyf]=e$ for all $x,y \in \Gamma$.\\
So, $f \in [e\Gamma \mathcal{T} \Gamma e]$.\\
But, $[e\Gamma \mathcal{T} \Gamma e]=[\mathcal{I}\Gamma \mathcal{H}]$.\\
$[\mathcal{I}\Gamma \mathcal{H}]$ is a $\Gamma$-group with zero.\\
The only idempotents in a group are identity and zero.
So, $f=e$ and $f=0$.
Hence, e is primitive.
\end{enumerate}
\end{proof}
\begin{lemma}\label{lemma5.6}
If $(\mathcal{T},\Gamma)$ is a completely 0-2-sided simple and e is a primitive idempotent of $(\mathcal{T},\Gamma)$. Then, $\mathcal{H}=[\mathcal{T}\Gamma e]$ and $\mathcal{I}=e \Gamma \mathcal{T}$ are 0-least $\Gamma$-LIds and 0-least $\Gamma$-RIds respectively and $[\mathcal{I}\Gamma \mathcal{H}]$ is a $\Gamma$-group with zero with e as the identity.
\end{lemma}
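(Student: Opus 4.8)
The plan is to read this as the converse of Lemma~\ref{lemma5.5}: there one began with prescribed 0-least ideals and extracted a primitive idempotent, whereas here we start from a primitive idempotent $e$ and must manufacture the 0-least ideals. First I would record the cheap structural facts. Since the $\Gamma$-multiplication is associative, $\mathcal{H}=[\mathcal{T}\Gamma e]$ is automatically a $\Gamma$-LId and $\mathcal{I}=[e\Gamma\mathcal{T}]$ a $\Gamma$-RId, and both are nonzero because idempotency gives $e=[e\alpha e]\in\mathcal{H}\cap\mathcal{I}$. Idempotency also supplies the one-sided identity behaviour I will lean on: every $a=[w\nu e]\in\mathcal{H}$ satisfies $[a\beta e]=[w\nu e\beta e]=[w\nu e]=a$, so $e$ is a right identity on $\mathcal{H}$, and dually a left identity on $\mathcal{I}$.

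The substance of the argument is showing $\mathcal{H}$ is 0-least, the case of $\mathcal{I}$ being symmetric. Let $\mathcal{A}$ be a nonzero $\Gamma$-LId with $\mathcal{A}\subseteq\mathcal{H}$ and fix $a\in\mathcal{A}\setminus\{0\}$. Since $[\mathcal{T}\Gamma a]\subseteq\mathcal{A}$, it suffices to prove $e\in[\mathcal{T}\Gamma a]$: then $e\in\mathcal{A}$ and the left-ideal property forces $\mathcal{H}=[\mathcal{T}\Gamma e]\subseteq\mathcal{A}\subseteq\mathcal{H}$. Because $(\mathcal{T},\Gamma)$ is 0-simple and $a\neq 0$, Theorem~\ref{thm3.1} gives $e\in\mathcal{T}=[\mathcal{T}\Gamma a\Gamma\mathcal{T}]$, say $e=[p_{0}\gamma a\delta q]$; left-multiplying by $e$ and using idempotency I may assume the left factor begins with $e$, i.e. $e=[p\gamma a\delta q]$ with $p\in[e\Gamma\mathcal{T}]$. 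Now set $d=[p\gamma a]$. Since $p$ begins with $e$ and $a=[w\nu e]$ ends with $e$, the element $d$ lies in $[e\Gamma\mathcal{T}\Gamma e]$, and $d\neq 0$ because otherwise $e=[d\delta q]=0$.

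To finish I need $d$ to be invertible, and this is where the group-with-zero structure of $G:=[e\Gamma\mathcal{T}\Gamma e]$ enters. I expect this to be the main obstacle, because the available lemmas (Lemma~\ref{lemma5.4} and Lemma~\ref{lemma5.5}) establish that property only \emph{after} the ideals are known to be 0-least, so to avoid circularity I must prove it directly from primitivity and 0-simplicity. The direct argument: $e$ is a two-sided identity of $G$ by idempotency; for nonzero $x\in G$, 0-simplicity yields $e\in[\mathcal{T}\Gamma x\Gamma\mathcal{T}]$, and sandwiching the two outer factors with $e$ (each absorbed by the copy of $e$ at the corresponding end of $x$) produces $g,h\in G$ with $e=[g\mu x\nu h]$. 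The standard monoid computation then applies: the elements $[x\nu h\kappa g]$ and $[h\kappa g\mu x]$ are idempotents of $G$ lying below $e$ in the partial order, hence equal to $e$ by primitivity, which shows $g$ and $h$ are two-sided units and therefore $x$ is a unit. Thus $G$ is a $\Gamma$-group with zero and identity $e$.

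Applying this to $d$ gives $d'\in G$ with $[d'\theta d]=e$, whence $e=[d'\theta d]=[(d'\theta p)\gamma a]\in[\mathcal{T}\Gamma a]$, completing the 0-least claim for $\mathcal{H}$; the argument for $\mathcal{I}$ is obtained by reflecting left and right throughout. For the last assertion, I note that $[\mathcal{T}\Gamma\mathcal{T}]=\mathcal{T}$ in a 0-simple $\Gamma$-semigroup, so $[\mathcal{I}\Gamma\mathcal{H}]=[e\Gamma\mathcal{T}\Gamma\mathcal{T}\Gamma e]=[e\Gamma\mathcal{T}\Gamma e]=G$, and the group-with-zero conclusion with identity $e$ is then exactly what was proved in the previous paragraph; alternatively, once $\mathcal{H}$ and $\mathcal{I}$ are known to be 0-least one may simply invoke Lemma~\ref{lemma5.4} together with Lemma~\ref{lemma5.5}.
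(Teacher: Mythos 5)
Your proof is essentially correct, but it is organized quite differently from the paper's. The paper fixes a nonzero $m$ in a right ideal $\mathcal{M}\subseteq[e\Gamma\mathcal{T}]$, uses $0$-simplicity to write $e=[x'p'mq'y']$, and then by a direct (and rather long) computation manufactures a specific element $f=[mq'yt_1xte]$ which it verifies by hand to be a nonzero idempotent with $[etf]=f$ and $[ft_1e]=f$; primitivity then forces $f=e$, giving $e\in[m\Gamma\mathcal{T}]$ and hence $\mathcal{M}=\mathcal{I}$. Only afterwards does it obtain the group-with-zero statement by citing Lemma~\ref{lemma5.4}. You instead factor the argument through an independent proof that $G=[e\Gamma\mathcal{T}\Gamma e]$ is a $\Gamma$-group with zero, derived directly from primitivity (sandwich a factorization $e=[g\mu x\nu h]$ with copies of $e$, show the resulting idempotents below $e$ equal $e$), and then deduce $0$-leastness of $\mathcal{H}$ from invertibility of $d=[p\gamma a]$ in $G$. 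The underlying engine is the same in both proofs --- $0$-simplicity gives a factorization of $e$ through the given nonzero element, and primitivity collapses a constructed idempotent onto $e$ --- but your modular arrangement is cleaner, avoids the paper's ad hoc element chase, and has the bonus of proving the final group-with-zero assertion without invoking Lemmas~\ref{lemma5.4} and~\ref{lemma5.5} at all, so there is no circularity. Two small points to tighten: (a) primitivity only forces a \emph{nonzero} idempotent below $e$ to equal $e$, so you must check that $[x\nu h\kappa g]$ and $[h\kappa g\mu x]$ are nonzero before concluding they equal $e$ (e.g.\ $[g\mu(x\nu h\kappa g)]=[e\kappa g]=g\neq 0$); the paper does carry out the analogous check for its $f$. (b) The phrase ``which shows $g$ and $h$ are two-sided units'' should read that $[h\kappa g]$ is a two-sided inverse of $x$; that is what the two idempotent identities actually give, and it is all you need.
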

\begin{proof}
We have to show that $\mathcal{I}=[e \Gamma \mathcal{T}]$ are 0-least $\Gamma$-RId.\\
$\mathcal{I} \neq 0$ since, $e \in \mathcal{I}$.
Assume $\mathcal{M}$ be a RId which is in $\mathcal{I}$.\\
Let $m \in \mathcal{M}\ \{0\}$.\\
Then, $m \in [e \Gamma \mathcal{T}]$, there is a $t \in \Gamma$ such that 
\begin{equation}
[etm]=m
\end{equation}
Since, $\mathcal{T}$ is 0-simple and $m \neq 0$, by \textbf{Proposition}\ref{prop3.1}, $\mathcal{T}=\mathcal{T}\Gamma m \Gamma \mathcal{T}$.\\
Then, there exist $x^{'},y^{'} \in \mathcal{T}, p^{'},q^{'} \in \Gamma$, $x^{'}p^{'}mq^{'}y^{'}=e$.\\
e is an idempotent and so, there exist $t_{1} \in \mathcal{T}$ such that 
\begin{equation}
et_{1}e=e
\end{equation}
Let $x=et_{1}x^{'}p^{'}e$ and $y=y^{'}t_{1}e$.\\
Then, 
\begin{equation}
xtmq^{'}y=e.
\end{equation}
Take $f=mq^{'}yt_{1}xte$.\\
\begin{align}
\notag
et_{1}x&=et_{1}et_{1}x^{'}p^{'}e \\
\notag
&=et_{1}x^{'}p^{'}e \\
&=
\begin{aligned}
x
\end{aligned}
\end{align}
\begin{align*}
ftf &= (mq^{'}yt_{1}xte)t(mq^{'}yt_{1}xte) \\
&=mq^{'}yt_{1}(xtetmq^{'}y)t_{1}xte\ [By\ eq.(3)]\\
&=mq^{'}yt_{1}et_{1}xte\ [By\ eq.(4)]\\ 
&=mq^{'}yt_{1}xte \\
&=f.
\end{align*}
\begin{align*}
etf&=etmq^{'}yt_{1}xte \\
&=mq^{'}yt_{1}xte \ [By\ eq.(1)] \\
&=f.
\end{align*}
\begin{align*}
ft_{1}e&=mq^{'}yt_{1}xtet_{1}e \ [By \ eq.(2)]\\
&=f.
\end{align*}
Now,
\begin{align*}
e&=et_{1}e \ [By \ eq.(2)]\\
&=xtmq^{'}yt_{1}xtmq^{'}y \ [By \ eq.(3)]\\
&=xtmq^{'}yt_{1}xtetmq^{'}y \ [By \ eq.(1)]\\
&=xtftmq^{'}y
\end{align*}
This implies $f \neq 0$.\\
Thus, f is a non zero idempotent.\\
e is an idempotent implies $f=e$.\\
$e=mq^{'}yt_{1}xte \in m\Gamma \mathcal{T}$.\\
Therefore, $\mathcal{I}=e\Gamma \mathcal{T} \subseteq m\Gamma \mathcal{T} \Gamma \mathcal{T}=m\Gamma \mathcal{T}$.\\
Thus, $\mathcal{M}=\mathcal{I}$ and $\mathcal{I}$ is a 0-least $\Gamma$-RId.\\
Similarly, we can show that $\mathcal{H}$ is a 0-least $\Gamma$-LId.\\
By the \textbf{Lemma}\ref{lemma5.4}, $[\mathcal{I}\Gamma \mathcal{H}=[e\Gamma \mathcal{T} \Gamma e]$ is a $\Gamma$-group with zero.\\
$e \in [e\Gamma \mathcal{T} \Gamma e]=[\mathcal{I}\Gamma \mathcal{H}$ and $e \neq 0$.\\
So, e is the identity element of $[\mathcal{I}\Gamma \mathcal{H}]$.
\end{proof}
\begin{theorem}\label{thm5.2}
Let $(\mathcal{T},\Gamma)$ is a 0-2-sided simple. Then, the necessary and sufficient conditions for $(\mathcal{T},\Gamma)$ to be completely 0-2-sided simple is it contains atleast one 0-least $\Gamma$-LId and one 0-least $\Gamma$-RId.
\end{theorem}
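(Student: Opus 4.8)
The plan is to prove the two implications separately, leaning almost entirely on the machinery assembled in Lemmas~\ref{lemma5.2}--\ref{lemma5.6}, so that the theorem becomes a matter of threading those results together in the right order rather than of fresh computation.

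For the necessity direction I would assume $(\mathcal{T},\Gamma)$ is completely 0-2-sided simple. By the definition of completeness it contains a primitive idempotent $e \neq 0$. I would then invoke \textbf{Lemma}~\ref{lemma5.6} directly with this $e$: it states exactly that $\mathcal{H} = [\mathcal{T}\Gamma e]$ is a 0-least $\Gamma$-LId and $\mathcal{I} = [e\Gamma\mathcal{T}]$ is a 0-least $\Gamma$-RId of $(\mathcal{T},\Gamma)$. This exhibits the required pair of ideals, so this half of the statement is essentially immediate.

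For the sufficiency direction I would assume $(\mathcal{T},\Gamma)$ is 0-2-sided simple and possesses at least one 0-least $\Gamma$-LId $\mathcal{H}$ and at least one 0-least $\Gamma$-RId; the goal is to manufacture a primitive idempotent. The first step is to secure a nonzero product, since an arbitrary pair of 0-least ideals need not satisfy $[\mathcal{H}\Gamma\mathcal{I}] \neq 0$. Here I would apply \textbf{Lemma}~\ref{lemma5.2} (whose hypotheses match the present situation) to select a 0-least $\Gamma$-RId $\mathcal{I}$ with $[\mathcal{H}\Gamma\mathcal{I}] = \mathcal{T} \neq 0$. With this choice, \textbf{Lemma}~\ref{lemma5.4} shows that $[\mathcal{I}\Gamma\mathcal{H}] = \mathcal{I}\cap\mathcal{H}$ is a $\Gamma$-group with zero; every such group carries a nonzero identity element $e$, which is in particular a nonzero idempotent. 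Finally \textbf{Lemma}~\ref{lemma5.5} identifies precisely this $e$ as a primitive idempotent of $(\mathcal{T},\Gamma)$, whence by definition $(\mathcal{T},\Gamma)$ is completely 0-2-sided simple.

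The main obstacle lies in the sufficiency direction, and within it the delicate point is the passage from the mere existence of some 0-least $\Gamma$-RId to a 0-least $\Gamma$-RId whose product with $\mathcal{H}$ is nonzero, for only such a pair feeds the group-with-zero structure. This is exactly what \textbf{Lemma}~\ref{lemma5.2} supplies, and without it neither Lemma~\ref{lemma5.4} nor Lemma~\ref{lemma5.5} could be invoked. Everything downstream---that the group with zero has an identity idempotent and that this idempotent is primitive---is discharged by the earlier lemmas, so the proof reduces to assembling these pieces in the correct sequence.
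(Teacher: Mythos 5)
Your proposal is correct and follows essentially the same route as the paper: Lemma~\ref{lemma5.6} for necessity, and Lemma~\ref{lemma5.2} followed by Lemma~\ref{lemma5.5} for sufficiency. Your explicit intermediate appeal to Lemma~\ref{lemma5.4} to produce the identity element $e$ that Lemma~\ref{lemma5.5} presupposes is a small but welcome addition of care that the paper's own proof leaves implicit.
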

\begin{proof}
Assume $(\mathcal{T},\Gamma)$ to be completely 0-2-sided simple.\\
It contains primitive idempotent e.\\
By \textbf{Lemma}\ref{lemma5.6}, $\mathcal{H}=[\mathcal{T}\Gamma e]$ and $\mathcal{I}=e \Gamma \mathcal{T}$ are 0-least $\Gamma$-LIds and 0-least $\Gamma$-RIds respectively.\\
Conversly assume $(\mathcal{T},\Gamma)$ contains atleast one 0-least $\Gamma$-LId and one 0-least $\Gamma$-RId.\\
Let $\mathcal{H}$ be a 0-least $\Gamma$-LId of $(\mathcal{T},\Gamma)$.\\
By \textbf{Lemma}\ref{lemma5.2}, there is a 0-least $\Gamma$-RId $\mathcal{I}$, $[\mathcal{H}\Gamma \mathcal{I}] \neq 0$.\\
By \textbf{Lemma}\ref{lemma5.5}, $(\mathcal{T},\Gamma)$ contains a primitive idempotent.\\
Therefore, $(\mathcal{T},\Gamma)$ is a completely 0-2-sided simple.
\end{proof}
\begin{corollary}
A complete 0-simple $\Gamma$-semigroup is the union of its 0-least $\Gamma$-LIds(RIds).
\end{corollary}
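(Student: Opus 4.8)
The plan is to obtain the statement as a direct specialization of Theorem~\ref{thm4.3}, taking the whole semigroup $\mathcal{T}$ in the role of the ambient $0$-least $\Gamma$-$2$-Id. First I would observe that a completely $0$-simple $\Gamma$-semigroup is in particular $0$-simple, so by the definition of $0$-simplicity the only $\Gamma$-$2$-sided ideals of $(\mathcal{T},\Gamma)$ are $(0)$ and $\mathcal{T}$ itself, with $[\mathcal{T}\Gamma\mathcal{T}]\neq 0$. Consequently $\mathcal{T}$ is a nonzero $\Gamma$-$2$-sided ideal whose only proper $\Gamma$-$2$-sided subideal is $(0)$; that is, $\mathcal{T}$ qualifies as a $0$-least $\Gamma$-$2$-Id of itself.

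Second, I would invoke Theorem~\ref{thm5.2}: since $(\mathcal{T},\Gamma)$ is completely $0$-$2$-sided simple, it contains at least one $0$-least $\Gamma$-LId and, symmetrically, at least one $0$-least $\Gamma$-RId. Thus the hypotheses of Theorem~\ref{thm4.3} are met with $\mathcal{H}=\mathcal{T}$: namely, $\mathcal{T}$ is a $0$-least $\Gamma$-$2$-Id containing at least one $0$-least $\Gamma$-LId.

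Applying Theorem~\ref{thm4.3} then yields that $\mathcal{T}=\mathcal{H}$ is the union of all $0$-least $\Gamma$-LIds of $(\mathcal{T},\Gamma)$ contained in it, which is precisely the desired conclusion for left ideals. The right-ideal version is entirely symmetric: Theorem~\ref{thm5.2} also guarantees a $0$-least $\Gamma$-RId, and the right-handed analogue of Theorem~\ref{thm4.3} (noted immediately after its proof) gives $\mathcal{T}$ as the union of its $0$-least $\Gamma$-RIds.

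I do not expect a genuine obstacle, since the corollary is essentially a repackaging of two already-established theorems. The only point requiring care is the first step, verifying that the ambient semigroup $\mathcal{T}$ itself satisfies the definition of a $0$-least $\Gamma$-$2$-Id, so that one may legitimately set $\mathcal{H}=\mathcal{T}$ in Theorem~\ref{thm4.3}. This is settled by reading the definition of $0$-simple against the definition of $0$-least $\Gamma$-$2$-Id, and no computation is involved.
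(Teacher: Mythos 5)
Your proposal is correct and follows exactly the route the paper intends: the paper's proof simply states that the corollary is immediate from Theorem~\ref{thm4.3} and Theorem~\ref{thm5.2}, and you have supplied the (correct) details, namely that $0$-simplicity lets $\mathcal{T}$ itself play the role of the $0$-least $\Gamma$-$2$-Id $\mathcal{H}$ in Theorem~\ref{thm4.3}, while Theorem~\ref{thm5.2} furnishes the required $0$-least $\Gamma$-LId (resp.\ RId).
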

\begin{proof}
The proof is immediate from \textbf{Theorem}\ref{thm4.3} and \textbf{Theorem}\ref{thm5.2}.
\end{proof}
\section{$\Gamma$-Prime Ideals}
\begin{definition}
    Let $(\mathcal{T},\Gamma)$ be a $\Gamma$-Semigroup. A $\Gamma$-2-Id $\mathcal{Q}$ is said to be $\Gamma$-prime if for any $\Gamma$-2-Id $\mathcal{E}, \mathcal{F} \in \mathcal{T}$, $[\mathcal{E}\Gamma \mathcal{F}] \subseteq \mathcal{Q}$ implies $\mathcal{E} \subseteq \mathcal{Q}$ or $\mathcal{F} \subseteq \mathcal{Q}$.
\end{definition}
\begin{example}
    Consider \[\mathcal{T} = \{\text{Set of all integers}\}\] \[\Gamma = \{\text{Set of all integers of the form } 2n+1 \text{ where } n \in \mathbb{N}\}\]. Then, \[\mathcal{E} = \{\text{Set of all integers of the form } 2n \text{ where } n \in \mathbb{N}\}\] is a $\Gamma$-prime ideal.
\end{example}
The following gives the characterization of $\Gamma$-prime ideals.
\begin{theorem}\label{thm 6.1}
     Let $(\mathcal{T},\Gamma)$ be a $\Gamma$-Semigroup. Then, a $\Gamma$-2-Id $\mathcal{Q}$ is $\Gamma$-prime if and only if for any $e, f \in \mathcal{T}$, $\{e\}_{2} \Gamma \{f\}_{2} \subseteq \mathcal{Q}$ implies $e \in \mathcal{Q}$ or $f \in \mathcal{Q}$ where, $\{e\}_{2}, \{f\}_{2}$ are the $\Gamma$-2-Id generated by e, f respectively.
\end{theorem}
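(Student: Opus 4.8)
The plan is to establish both implications of the biconditional separately, with the forward direction being an immediate application of the definition of $\Gamma$-prime and the reverse direction reducing to a short monotonicity argument about generated ideals.

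For the forward direction I would assume $\mathcal{Q}$ is $\Gamma$-prime and fix $e, f \in \mathcal{T}$ with $\{e\}_{2}\,\Gamma\,\{f\}_{2} \subseteq \mathcal{Q}$. Since $\{e\}_{2}$ and $\{f\}_{2}$ are themselves $\Gamma$-2-sided ideals (they are by construction the $\Gamma$-2-Id generated by $e$ and $f$), the defining property of $\Gamma$-primeness applies directly with $\mathcal{E} = \{e\}_{2}$ and $\mathcal{F} = \{f\}_{2}$, yielding $\{e\}_{2} \subseteq \mathcal{Q}$ or $\{f\}_{2} \subseteq \mathcal{Q}$. Because $e \in \{e\}_{2}$ and $f \in \{f\}_{2}$, this gives $e \in \mathcal{Q}$ or $f \in \mathcal{Q}$, which is exactly what the element-wise condition demands.

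For the reverse direction I would argue by contraposition. Assuming the element-wise condition, let $\mathcal{E}, \mathcal{F}$ be arbitrary $\Gamma$-2-sided ideals with $[\mathcal{E}\Gamma \mathcal{F}] \subseteq \mathcal{Q}$, and suppose toward a contradiction that neither $\mathcal{E} \subseteq \mathcal{Q}$ nor $\mathcal{F} \subseteq \mathcal{Q}$. Then I may choose $e \in \mathcal{E}\setminus \mathcal{Q}$ and $f \in \mathcal{F}\setminus \mathcal{Q}$. The crucial observation is that $\{e\}_{2} \subseteq \mathcal{E}$ and $\{f\}_{2} \subseteq \mathcal{F}$, since the $\Gamma$-2-Id generated by an element is the smallest $\Gamma$-2-sided ideal containing it. Consequently $\{e\}_{2}\,\Gamma\,\{f\}_{2} \subseteq [\mathcal{E}\Gamma \mathcal{F}] \subseteq \mathcal{Q}$, so the hypothesis forces $e \in \mathcal{Q}$ or $f \in \mathcal{Q}$, contradicting the choice of $e$ and $f$. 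Hence $\mathcal{E} \subseteq \mathcal{Q}$ or $\mathcal{F} \subseteq \mathcal{Q}$, so $\mathcal{Q}$ is $\Gamma$-prime.

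The main obstacle, and really the only step beyond bookkeeping, is justifying the inclusion $\{e\}_{2} \subseteq \mathcal{E}$ whenever $e \in \mathcal{E}$ and $\mathcal{E}$ is a $\Gamma$-2-sided ideal. I would verify this term by term against the explicit description $\{e\}_{2} = \{e\} \cup [\mathcal{T}\Gamma e] \cup [e\Gamma \mathcal{T}] \cup [\mathcal{T}\Gamma e \Gamma \mathcal{T}]$ from the preliminaries: the piece $[\mathcal{T}\Gamma e]$ lies in $\mathcal{E}$ because $\mathcal{E}$ is a $\Gamma$-LId, the piece $[e\Gamma \mathcal{T}]$ because $\mathcal{E}$ is a $\Gamma$-RId, and $[\mathcal{T}\Gamma e \Gamma \mathcal{T}]$ by using both absorption properties in turn. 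This monotonicity of the generating operation is precisely what lets me transfer the element-wise hypothesis back to statements about arbitrary ideals.
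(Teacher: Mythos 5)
Your proof is correct and complete. Note that the paper actually states Theorem~\ref{thm 6.1} without supplying any proof, so there is nothing to compare against; your argument is the standard one that fills this gap. Both directions are sound: the forward implication is indeed just the definition of $\Gamma$-prime applied to the principal ideals $\{e\}_{2}$ and $\{f\}_{2}$, and your reverse direction correctly hinges on the minimality of the generated ideal, i.e.\ $\{e\}_{2} \subseteq \mathcal{E}$ whenever $e \in \mathcal{E}$, which you rightly verify term by term against the explicit description $\{e\}_{2} = \{e\} \cup [\mathcal{T}\Gamma e] \cup [e\Gamma \mathcal{T}] \cup [\mathcal{T}\Gamma e \Gamma \mathcal{T}]$ using the left and right absorption properties of $\mathcal{E}$.
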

The theorem below gives the useful result for a commutative $\Gamma$-semigroup to be $\Gamma$-prime.
\begin{theorem}
    Let $(\mathcal{T},\Gamma)$ be a commutative $\Gamma$-Semigroup. Then, a $\Gamma$-2-Id $\mathcal{Q}$ is $\Gamma$-prime if and only if for any $e, f \in \mathcal{T}$, $e \Gamma f \subseteq \mathcal{Q}$ implies $e \in \mathcal{Q}$ or $f \in \mathcal{Q}$
\end{theorem}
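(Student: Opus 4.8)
The plan is to reduce everything to the characterization already established in Theorem~\ref{thm 6.1}, which states that $\mathcal{Q}$ is $\Gamma$-prime exactly when $\{e\}_{2}\Gamma\{f\}_{2}\subseteq\mathcal{Q}$ forces $e\in\mathcal{Q}$ or $f\in\mathcal{Q}$. The task is therefore to show that, in a commutative $\Gamma$-semigroup, the single-element condition $e\Gamma f\subseteq\mathcal{Q}$ is interchangeable with the generated-ideal condition $\{e\}_{2}\Gamma\{f\}_{2}\subseteq\mathcal{Q}$, after which both equivalences follow from Theorem~\ref{thm 6.1}.

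The converse direction is immediate and uses no commutativity. Suppose the stated condition holds, i.e. $e\Gamma f\subseteq\mathcal{Q}$ implies $e\in\mathcal{Q}$ or $f\in\mathcal{Q}$. Since $e\in\{e\}_{2}$ and $f\in\{f\}_{2}$, we always have $e\Gamma f\subseteq\{e\}_{2}\Gamma\{f\}_{2}$, so whenever $\{e\}_{2}\Gamma\{f\}_{2}\subseteq\mathcal{Q}$ we also have $e\Gamma f\subseteq\mathcal{Q}$, and the hypothesis delivers $e\in\mathcal{Q}$ or $f\in\mathcal{Q}$. By Theorem~\ref{thm 6.1} this makes $\mathcal{Q}$ a $\Gamma$-prime ideal.

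For the forward direction, assume $\mathcal{Q}$ is $\Gamma$-prime and $e\Gamma f\subseteq\mathcal{Q}$. The key step, and the only place commutativity enters, is the inclusion $\{e\}_{2}\Gamma\{f\}_{2}\subseteq\mathcal{Q}$; once it is in hand, Theorem~\ref{thm 6.1} yields $e\in\mathcal{Q}$ or $f\in\mathcal{Q}$. Recall $\{e\}_{2}=\{e\}\cup[\mathcal{T}\Gamma e]\cup[e\Gamma\mathcal{T}]\cup[\mathcal{T}\Gamma e\Gamma\mathcal{T}]$ and similarly for $\{f\}_{2}$, so every generator of $[\{e\}_{2}\Gamma\{f\}_{2}]$ has the form $[x\mu y]$ with $x$ containing $e$ as a factor, $y$ containing $f$ as a factor, and $\mu\in\Gamma$. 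I would treat the generic, hardest case $x=[t_{1}\alpha e\beta t_{2}]$ and $y=[s_{1}\gamma f\delta s_{2}]$, the other combinations being degenerations of it. Here $[x\mu y]=[t_{1}\alpha e\beta t_{2}\mu s_{1}\gamma f\delta s_{2}]$; setting $w=[t_{2}\mu s_{1}]\in\mathcal{T}$ and using associativity exposes the inner factor $[e\beta w\gamma f]$. Commutativity $[e\beta w]=[w\beta e]$ rewrites it as $[w\beta(e\gamma f)]$, and since $e\gamma f\in[e\Gamma f]\subseteq\mathcal{Q}$ with $\mathcal{Q}$ a $\Gamma$-left ideal, this inner factor lies in $[\mathcal{T}\Gamma\mathcal{Q}]\subseteq\mathcal{Q}$. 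The outer factors then place $[x\mu y]\in[\mathcal{T}\Gamma\mathcal{Q}\Gamma\mathcal{T}]\subseteq\mathcal{Q}$, using that $\mathcal{Q}$ is a $\Gamma$-2-sided ideal. When the material between $e$ and $f$ is empty one simply gets $[e\mu f]\in[e\Gamma f]\subseteq\mathcal{Q}$ directly.

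The main obstacle is precisely this case analysis: one must check that across the sixteen combinations arising from the four pieces of $\{e\}_{2}$ and the four pieces of $\{f\}_{2}$, the material sitting between $e$ and $f$ always collapses to a single element of $\mathcal{T}$ and can be slid past $e$ by commutativity so as to reveal a factor $[e\sigma f]\in\mathcal{Q}$. I expect the bookkeeping, rather than any deep idea, to be the delicate part, with commutativity and the absorption properties of the $\Gamma$-2-sided ideal $\mathcal{Q}$ carrying each case through.
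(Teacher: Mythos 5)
Your proposal is correct and follows essentially the same route as the paper: both directions reduce to Theorem~\ref{thm 6.1}, with commutativity used only in the forward direction to rearrange a generic element of $\{e\}_{2}\Gamma\{f\}_{2}$ until a factor from $e\Gamma f\subseteq\mathcal{Q}$ appears and is absorbed by the two-sided ideal. The paper carries out the same computation as a single chain of set inclusions (and argues the converse directly from the definition of $\Gamma$-prime rather than via Theorem~\ref{thm 6.1}), but the substance is identical.
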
 
\begin{proof}
    Assume $\mathcal{Q}$ is $\Gamma$-prime. Let $e, f \in \mathcal{T}$ and $e \Gamma f \subseteq \mathcal{Q}$.
    If $\mathcal{Q}$ is $\Gamma$-prime, then we have for any $e, f \in \mathcal{T}$, $\{e\}_{2} \Gamma \{f\}_{2} \subseteq \mathcal{Q}$ implies $e \in \mathcal{Q}$ or $f \in \mathcal{Q}$ where, $\{e\}_{2}, \{f\}_{2}$ are the $\Gamma$-2-Id generated by e, f respectively by \textbf{Theorem}\ref{thm 6.1}. 
    \begin{align*}
        \{e\}_{2} \Gamma \{f\}_{2} = ([\mathcal{T} \Gamma e \Gamma \mathcal{T}] \cup \{e\}) \Gamma ([\mathcal{T} \Gamma f \Gamma \mathcal{T}] \cup \{f\}) &\\ = [\mathcal{T} \Gamma e \Gamma \mathcal{T}]  \Gamma [\mathcal{T} \Gamma f \Gamma \mathcal{T}] \cup [e \Gamma f] &\\ \subseteq \mathcal{T} \Gamma e \Gamma \mathcal{T} \Gamma f \Gamma \mathcal{T} \cup \mathcal{Q} &\\ \subseteq e \Gamma \mathcal{T} \Gamma \mathcal{T} \Gamma \mathcal{T} \Gamma  f \cup \mathcal{Q} &\\ \subseteq e \Gamma f \Gamma \mathcal{T} \cup \mathcal{Q} &\\ \subseteq \mathcal{Q} \Gamma \mathcal{T} \cup \mathcal{Q} &\\ \subseteq \mathcal{Q}
    \end{align*} 
    This implies $e \in \mathcal{Q}$ or $f \in \mathcal{Q}$.\\
    Conversly, suppose that for any $e, f \in \mathcal{T}$, $e \Gamma f \subseteq \mathcal{Q}$ implies $e \in \mathcal{Q}$ or $f \in \mathcal{Q}$.
    Suppose for any $\mathbf{E}, \mathcal{F} \in \mathcal{T}$, $\mathbf{E} \Gamma \mathcal{F} \subseteq \mathcal{Q}$ and $\mathcal{F} \not\subseteq \mathcal{Q}$.
    Then, there exist a $f \in \mathcal{F}$ such that $f \not\in \mathcal{Q}$. For any $e \in \mathcal{E}$, $e \Gamma f \subseteq \mathcal{E} \Gamma \mathcal{F} \subseteq \mathcal{Q}$. Hence, $\mathcal{E} \in \mathcal{Q}$.
    Therefore, $\mathcal{Q}$ is a $\Gamma$-prime.
    
\end{proof}
\begin{example}
    Consider \[ \mathcal{T}=\{Set\ of\ all\ negative\ integers\}\], \[ \Gamma=\{Set\ of\ all\ negative\ integers\ of \ the \ form \ 2n+1 \ where,\ n \in \mathbf{N}\ \}\]
    \[ \mathcal{Q}_{1}=\{ 2p:p\in \mathcal{T} \} \] is $\Gamma$-prime.

    \[
\begin{array}{l}
e \Gamma f \in \mathcal{Q}_{1} \quad (e, f \in \mathcal{T}) \\
\Longleftrightarrow e \Gamma f \text{ is divisible by } 3 \\
\Longleftrightarrow e \text{ is divisible by } 3 \text{ or } f \text{ is divisible by } 3 \\
\Longleftrightarrow e \in \mathcal{Q}_{1} \text{ or } f \in \mathcal{Q}_{1}.
\end{array}
\]

    But, \[ \mathcal{Q}_{2}=\{ 20p:p\in \mathcal{T} \} \] is not $\Gamma$-prime. Since, $ -2 \times -5 \times -2 \in \mathcal{Q}_{2} $ but, $-2 \not \in \mathcal{Q}_{2}$.
\end{example}
\begin{theorem}
    If $\mathcal{H}$ is a $\Gamma$-2-Id of $\Gamma$-semigroup $(\mathcal{T}, \Gamma)$ and $\mathcal{Q}$ is a $\Gamma$-prime ideal, then $\mathcal{H} \cap \mathcal{Q}$ is a $\Gamma$-prime ideal.
\end{theorem}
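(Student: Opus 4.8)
The plan is to prove the two requirements of a $\Gamma$-prime ideal in turn: first that $\mathcal{H} \cap \mathcal{Q}$ is a $\Gamma$-2-sided ideal, and then that it satisfies the prime condition. The ideal part is routine absorption: for $x \in \mathcal{H} \cap \mathcal{Q}$, $t \in \mathcal{T}$ and $\gamma \in \Gamma$, the products $[t\gamma x]$ and $[x\gamma t]$ lie in $\mathcal{H}$ because $\mathcal{H}$ is a $\Gamma$-2-Id and simultaneously lie in $\mathcal{Q}$ because $\mathcal{Q}$ is a $\Gamma$-2-Id, hence they lie in the intersection; carrying this out on both sides gives $[\mathcal{T}\Gamma(\mathcal{H}\cap\mathcal{Q})]\subseteq \mathcal{H}\cap\mathcal{Q}$ and $[(\mathcal{H}\cap\mathcal{Q})\Gamma\mathcal{T}]\subseteq \mathcal{H}\cap\mathcal{Q}$.

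For the prime condition I would invoke the element-wise characterization of Theorem~\ref{thm 6.1} rather than the raw definition, since it reduces matters to principal $\Gamma$-2-Ids. So I would take elements $e,f$ with $\{e\}_{2}\Gamma\{f\}_{2}\subseteq \mathcal{H}\cap\mathcal{Q}$ and try to conclude $e\in\mathcal{H}\cap\mathcal{Q}$ or $f\in\mathcal{H}\cap\mathcal{Q}$. Since $\{e\}_{2}\Gamma\{f\}_{2}\subseteq \mathcal{Q}$ and $\mathcal{Q}$ is $\Gamma$-prime, Theorem~\ref{thm 6.1} gives at once $e\in\mathcal{Q}$ or $f\in\mathcal{Q}$; suppose $e\in\mathcal{Q}$.

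The main obstacle is upgrading this to $e\in\mathcal{H}$, for only then does $e$ land in $\mathcal{H}\cap\mathcal{Q}$, and the $\Gamma$-primeness of $\mathcal{Q}$ alone carries no information about membership in $\mathcal{H}$. This is precisely the point where the proof must use that the test elements already sit inside $\mathcal{H}$: reading the prime condition for $\mathcal{H}\cap\mathcal{Q}$ inside the $\Gamma$-semigroup $\mathcal{H}$ (so that $e,f\in\mathcal{H}$ and the generated ideals are taken relative to $\mathcal{H}$), the conclusion $e\in\mathcal{Q}$ forces $e\in\mathcal{H}\cap\mathcal{Q}$ for free, and symmetrically for $f$. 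I would organize the write-up so that $\mathcal{H}$ serves as the ambient $\Gamma$-semigroup and the only substantive step is transferring the split $e\in\mathcal{Q}$ or $f\in\mathcal{Q}$ from the $\Gamma$-primeness of $\mathcal{Q}$. I would also be wary of the unrestricted reading, in which $\mathcal{E},\mathcal{F}$ range over all $\Gamma$-2-Ids of $\mathcal{T}$: there the analogue of $(2)\cap(3)=(6)$ in the integers suggests the claim can fail, so if that generality is intended the theorem would need the extra hypothesis $\mathcal{Q}\subseteq\mathcal{H}$, under which $\mathcal{H}\cap\mathcal{Q}=\mathcal{Q}$ and primeness is inherited directly.
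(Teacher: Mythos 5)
Your proposal is correct and follows essentially the same route as the paper: both apply Theorem~\ref{thm 6.1} to get $e\in\mathcal{Q}$ or $f\in\mathcal{Q}$ from $\{e\}_{2}\Gamma\{f\}_{2}\subseteq\mathcal{H}\cap\mathcal{Q}\subseteq\mathcal{Q}$, and both resolve the membership-in-$\mathcal{H}$ issue by reading the conclusion as primeness of $\mathcal{H}\cap\mathcal{Q}$ as an ideal \emph{of} $\mathcal{H}$ (the paper's proof explicitly concludes ``$\mathcal{H}\cap\mathcal{Q}$ is a $\Gamma$-prime ideal of $\mathcal{H}$''). Your closing caveat about the unrestricted reading is well taken and is consistent with the paper's own subsequent note that intersections of $\Gamma$-prime ideals of $\mathcal{T}$ need not be $\Gamma$-prime in $\mathcal{T}$.
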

\begin{proof}
    Let $e, f \in \mathcal{H}$. 
    \[ \{e\}_{2} \Gamma \{f\}_{2} \subseteq \mathcal{H} \cap \mathcal{Q} \subseteq \mathcal{Q}\]
    From \textbf{Theorem}\ref{thm 6.1}, $e \in \mathcal{Q}$ or $f \in \mathcal{Q}$. This implies $e \in \mathcal{H} \cap \mathcal{Q}$ or $f \in \mathcal{H} \cap \mathcal{Q}$.
    Clearly, $\mathcal{H} \cap \mathcal{Q}$ is a $\Gamma$-2-Id of $\mathcal{H}$.
    Therefore, from \textbf{Theorem}\ref{thm 6.1}, $\mathcal{H} \cap \mathcal{Q}$ is a $\Gamma$-prime ideal of $\mathcal{H}$.
\end{proof}
\begin{example}
    Consider \[ \mathcal{T}=\{Set\ of\ all\ negative\ integers\}\], \[ \Gamma=\{Set\ of\ all\ negative\ integers\ of \ the \ form \ 2n+1 \ where,\ n \in \mathbf{N}\ \}\]
    \[ \mathcal{Q}_{1}=\{ 2p:p\in \mathcal{T} \} \] and \[ \mathcal{Q}_{2}=\{ 3p:p\in \mathcal{T} \} \] are $\Gamma$-prime. But, $\mathcal{Q}_{1} \cup \mathcal{Q}_{2}$ is not $\Gamma$-prime.
\end{example}
\textbf{Note:}Union and intersection of $\Gamma$-prime ideals need not be $\Gamma$-prime.

\begin{theorem}
    Let $\mathcal{Q}_{i}$ be a collection of $\Gamma$-prime ideals of a $\Gamma$-semigroup $(\mathcal{T}, \Gamma)$ such that $\mathcal{Q}_{i}$ forms a dcc or ACC. Then, $\bigcup {\mathcal{Q}_{i}} $ and $\bigcap {\mathcal{Q}_{i}} $ are $\Gamma$-prime.
\end{theorem}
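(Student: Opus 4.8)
The plan is to reduce the whole statement to the element-wise characterization of $\Gamma$-primeness proved in \textbf{Theorem}~\ref{thm 6.1}, and then to exploit that a family satisfying the DCC or ACC is linearly ordered by inclusion. First I would dispose of the ideal structure, which costs nothing: by the earlier characterization ``$\mathcal{E}$ is a $\Gamma$-2-Id iff $[\mathcal{T}\Gamma\mathcal{E}\Gamma\mathcal{T}]\subseteq\mathcal{E}$'', both $\bigcap\mathcal{Q}_{i}$ and $\bigcup\mathcal{Q}_{i}$ are automatically $\Gamma$-2-Ids, since $[\mathcal{T}\Gamma(\bigcap\mathcal{Q}_i)\Gamma\mathcal{T}]\subseteq\bigcap[\mathcal{T}\Gamma\mathcal{Q}_i\Gamma\mathcal{T}]\subseteq\bigcap\mathcal{Q}_i$ and likewise for the union, the absorption condition passing through term by term with no chain hypothesis needed. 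So the only real content is verifying the primeness clause of \textbf{Theorem}~\ref{thm 6.1} for each of the two ideals.

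For the intersection I would argue order-theoretically, and this part works for \emph{any} chain of $\Gamma$-prime ideals. Set $\mathcal{Q}=\bigcap\mathcal{Q}_{i}$ and suppose $\{e\}_{2}\,\Gamma\,\{f\}_{2}\subseteq\mathcal{Q}$ while, toward a contradiction, $e\notin\mathcal{Q}$ and $f\notin\mathcal{Q}$. Then there are indices $m,n$ with $e\notin\mathcal{Q}_{m}$ and $f\notin\mathcal{Q}_{n}$. Since $\{e\}_{2}\,\Gamma\,\{f\}_{2}\subseteq\mathcal{Q}\subseteq\mathcal{Q}_{m}$ and $\mathcal{Q}_m$ is $\Gamma$-prime, \textbf{Theorem}~\ref{thm 6.1} forces $f\in\mathcal{Q}_{m}$, and symmetrically $e\in\mathcal{Q}_{n}$. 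Now invoke comparability: if $\mathcal{Q}_{n}\subseteq\mathcal{Q}_{m}$ then $e\in\mathcal{Q}_{n}\subseteq\mathcal{Q}_{m}$, contradicting $e\notin\mathcal{Q}_m$; if $\mathcal{Q}_{m}\subseteq\mathcal{Q}_{n}$ then $f\in\mathcal{Q}_{m}\subseteq\mathcal{Q}_{n}$, contradicting $f\notin\mathcal{Q}_n$. Either way we contradict the choice of $m,n$, so $e\in\mathcal{Q}$ or $f\in\mathcal{Q}$.

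For the union, $\mathcal{Q}=\bigcup\mathcal{Q}_{i}$, I expect the genuine obstacle. The difficulty is that $\Gamma$-primeness is phrased through the \emph{set} $\{e\}_{2}\,\Gamma\,\{f\}_{2}$ and not through a single product, so the ring-theoretic reflex ``$\{e\}_{2}\,\Gamma\,\{f\}_{2}\subseteq\bigcup\mathcal{Q}_{i}$, hence contained in one $\mathcal{Q}_{j}$'' is \textbf{not} automatic: an infinite ascending union can scatter the elements of this set across arbitrarily large members, and the contrapositive only yields, for each $i$, \emph{some} element of $\{e\}_{2}\,\Gamma\,\{f\}_{2}$ outside $\mathcal{Q}_{i}$, with no single witness outside the whole union. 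This is exactly where the chain condition must do real work rather than cosmetic work. My plan is to use the DCC/ACC hypothesis to realize the union as an actual member of the family: under the ACC the ascending chain stabilizes, so $\bigcup\mathcal{Q}_{i}$ coincides with its greatest term $\mathcal{Q}_{M}$, which is $\Gamma$-prime by assumption; dually, a descending chain has a first (largest) term at which the union is attained. Once $\bigcup\mathcal{Q}_{i}$ is identified with one of the $\mathcal{Q}_{i}$, its primeness is immediate. I would state explicitly that attainment of the union is the crux, and finally check the degenerate cases (the empty family, and the behaviour of the adjoined zero) so that the resulting sets are honestly the $\Gamma$-2-Ids to which \textbf{Theorem}~\ref{thm 6.1} applies.
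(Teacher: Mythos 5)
Your proposal is correct and takes a genuinely different (and cleaner) route than the paper. The paper argues directly with arbitrary $\Gamma$-2-Ids $\mathcal{E},\mathcal{F}$ with $\mathcal{E}\Gamma\mathcal{F}\subseteq\bigcup\mathcal{Q}_i$: after disposing of the case where some single $\mathcal{Q}_i$ already contains $\mathcal{E}$ or $\mathcal{F}$, it picks comparable indices via the chain hypothesis and then asserts $\mathcal{E}\Gamma\mathcal{F}\subseteq\mathcal{Q}_i$ for a \emph{single} $i$ in order to invoke primeness of $\mathcal{Q}_i$; the intersection is dismissed with ``similarly.'' You instead prove the intersection case through the element-wise characterization of Theorem~\ref{thm 6.1} together with pairwise comparability (the classical ``intersection of a chain of primes is prime'' argument), which is complete and in fact needs no chain \emph{condition} at all, only that the family is a chain; and you handle the union by showing it is attained as some $\mathcal{Q}_M$, whence primeness is inherited outright. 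What your route buys is an honest localization of where the hypothesis does work: the paper's unjustified step ``$\mathcal{E}\Gamma\mathcal{F}\subseteq\mathcal{Q}_i$'' is precisely the attainment problem you flag, since the set $\mathcal{E}\Gamma\mathcal{F}$ can be scattered across an infinite strictly ascending chain. One shared caveat: if ``dcc'' is read as the descending chain \emph{condition} rather than ``descending chain,'' an infinite strictly ascending chain satisfies it, its union is not attained, and neither your argument nor the paper's settles that case --- but you at least name the gap, while the paper's proof conceals it. Your insistence on comparability is also vindicated by the paper's own preceding example ($2\mathcal{T}$ and $3\mathcal{T}$, a finite and hence chain-condition-satisfying but incomparable pair with non-prime union), which shows the theorem is only true under the ``chain'' reading you adopt.
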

\begin{proof}
    From proposition, $\bigcup {\mathcal{Q}_{i}} $ is a $\Gamma$-2-Id.
    Let $\mathcal{E} \Gamma \mathcal{F} \subseteq \bigcup {\mathcal{Q}_{i}}$ for any $\Gamma$-2-Id $\mathcal{E}$ and $\mathcal{F}$. We have to show that $\mathcal{E}  \subseteq \bigcup {\mathcal{Q}_{i}}$ or $ \mathcal{F} \subseteq \bigcup {\mathcal{Q}_{i}}$. \\
    If there is an $i$ such that $\mathcal{E}  \subseteq  {\mathcal{Q}_{i}}$ or $ \mathcal{F} \subseteq  {\mathcal{Q}_{i}}$, then $\mathcal{E}  \subseteq \bigcup {\mathcal{Q}_{i}}$ or $ \mathcal{F} \subseteq \bigcup {\mathcal{Q}_{i}}$.
    \\ Otherwise, Suppose $\mathcal{E} \Gamma \mathcal{F} \not \subseteq \bigcup {\mathcal{Q}_{i}}$. Then, there exist $i,j$ such that $\mathcal{E}  \subseteq  {\mathcal{Q}_{i}}$ or $ \mathcal{F} \subseteq  {\mathcal{Q}_{j}}$. Since, ${\mathcal{Q}_{i}}$ satisfies ACC or dcc, Take ${\mathcal{Q}_{i}} \subseteq \mathcal{Q}_{j} $. This implies $ \mathcal{F} \not \subseteq  {\mathcal{Q}_{i}}$. Since, $\mathcal{E} \Gamma \mathcal{F} \subseteq  {\mathcal{Q}_{i}}$ and $\mathcal{Q}_{i}$ is a $\Gamma$-prime.
    Then, $\mathcal{E}  \subseteq  {\mathcal{Q}_{i}}$ or $ \mathcal{F} \subseteq  {\mathcal{Q}_{i}}$, a contradiction.
    Therefore, $\mathcal{E}  \subseteq  {\mathcal{Q}_{i}}$ or $ \mathcal{F} \subseteq  {\mathcal{Q}_{i}}$, then $\mathcal{E}  \subseteq \bigcup {\mathcal{Q}_{i}}$ or $ \mathcal{F} \subseteq \bigcup {\mathcal{Q}_{i}}$. Therefore, $\bigcup {\mathcal{Q}_{i}}$ is a $\Gamma$-prime.\\
    Similarly, $\bigcap {\mathcal{Q}_{i}}$ can be proved.
    
\end{proof}
\section{Conclusion}
 We have explored the concept of $\Gamma$-semigroups in depth. Section 2 provided the fundamental definitions essential for understanding the subsequent sections. In Section 3, we introduced the notion of $\Gamma$-simple semigroups, outlined their fundamental properties, and defined $\Gamma$-o-simple semigroups, offering characterizations and related results.

Section 4 delved into the concept of o-least $\Gamma$-ideals, presenting key theorems concerning $\Gamma$-LIds. In Section 5, we investigated the concept of completely 0-2-sided least simple $\Gamma$-semigroups, demonstrating that such a semigroup is its own $\mathcal{D}$-class and exhibits regularity. Also, we established that the presence of at least one 0-least $\Gamma$-left ideal (0-least $\Gamma$-LId) and one 0-least $\Gamma$-right ideal (0-least $\Gamma$-RId) is both a necessary and sufficient condition for a $(\mathcal{T},\Gamma)$ to be completely 0-simple.
In this work, we explored the concept of $\Gamma$-prime ideals within the framework of $\Gamma$-semigroups. We identified conditions under which a $\Gamma$-2-sided ideal can be classified as a $\Gamma$-prime ideal, providing concrete examples to illustrate these results. For commutative $\Gamma$-semigroups, we established a sufficient condition to ensure the $\Gamma$-prime property, contributing to the understanding of their structural characteristics.

Counterexamples demonstrated that the union and intersection of $\Gamma$-prime ideals are not necessarily $\Gamma$-prime, highlighting potential complexities in their behavior. However, we formulated and proved conditions under which the union and intersection of $\Gamma$-prime ideals becomes $\Gamma$-prime.

\section*{Acknowledgements}
We would like to express our sincere gratitude to our institution for giving support, which was invaluable in the completion of this study.

\noindent\hrulefill

\end{document}